\def\supp{\underline}
\def\sign{\mathrm{sign}}
\def\minsupp{\mathrm{Minsupp}}
\def\C{\mathbb C}
\def\F{\mathbb F}
\def\P{\mathbb P}
\def\R{\mathbb R}
\def\K{\mathbb K}
\def\S{\mathbb S}
\def\T{\mathbb T}
\def\PP{\mathbb P}
\def\TR{\mathbb T\mathbb R}
\def\TC{\mathbb T\mathbb C}
\def\phr{\varphi}
\def\TP{{\mathbb T}\P}
\def\phr{\varphi}
\def\BB{\mathcal{B}}
\def\CC{\mathcal{C}}
\def\MM{\mathcal{M}}
\def\VV{\mathcal{V}}
\def\WW{\mathcal{W}}
\def\0{\mathbf 0}
\def\ph{\operatorname{ph}}
\def\id{\mathrm{id}}
\def\min{\mathrm{min}}
\def\hplus{\boxplus}
\def\htimes{\cdot}
\newtheorem{thm}{Theorem}[section]
\newtheorem{lemma}[thm]{Lemma}
\newtheorem{prop}[thm]{Proposition}
\newtheorem{conj}[thm]{Conjecture}
\newtheorem{cor}[thm]{Corollary}
\theoremstyle{definition}
\newtheorem{defn}[thm]{Definition}
\newtheorem{nota}[thm]{Notation}
\newtheorem{remark}[thm]{Remark}
\newtheorem{example}[thm]{Example}
\newtheorem*{wcproperty}{Weak Closure Property}
\newtheorem*{eproperty}{Elimination Property}
\newtheorem*{acproperty}{Additive Continuum Property}
\def\row{\mathop{\rm Row}}
\def\mmax{\mathop{\rm max}}
\newcommand{\bighplus}{
  \mathop{
    \vphantom{\bigoplus} 
    \mathchoice
      {\vcenter{\hbox{\resizebox{\widthof{$\displaystyle\bigoplus$}}{!}{$\boxplus$}}}}
      {\vcenter{\hbox{\resizebox{\widthof{$\bigoplus$}}{!}{$\boxplus$}}}}
      {\vcenter{\hbox{\resizebox{\widthof{$\scriptstyle\oplus$}}{!}{$\boxplus$}}}}
      {\vcenter{\hbox{\resizebox{\widthof{$\scriptscriptstyle\oplus$}}{!}{$\boxplus$}}}}
  }\displaylimits 
}
\begin{document}
\title{Vectors of matroids over tracts}
\author{Laura Anderson}
\email{laura@math.binghamton.edu}
\address{Department of Mathematical Sciences, Binghamton University, Binghamton, NY 13902-6000, USA.}
\keywords{matroid, oriented matroid, tract, hyperfield, covector}
\begin{abstract} We enrich Baker and Bowler's theory of matroids over tracts with notions of vectors and covectors. In the case of oriented matroids, these $F$-vectors and $F$-covectors coincide with the usual signed vectors and signed covectors, and in the case of ordinary matroids, they are essentially the unions of circuits resp. unions of cocircuits.. In the case of matroids over a field $F$, the $F$-covector set resp.\ $F$-vector set of an $F$-matroid is a linear subspace of $F^E$ resp.\ its orthogonal complement.
\end{abstract}

\maketitle

The theory of ``matroids with extra structure", including oriented matroids (\cite{BLSWZ}), valuated matroids (\cite{DW92}), and phased matroids (originally called ``complex matroids" in~\cite{AD}),\footnote{Since publishing~\cite{AD}, my coauthor and I have come to agree with commenters that the term ``complex matroids" used there was a poor choice, since it suggests matroids realizable over the complex numbers.} recently found a beautiful common description in a paper of Baker and Bowler (\cite{BB17}, see also \cite{BB16}) as {\em matroids over hyperfields}, and more generally, {\em matroids over tracts}. A hyperfield is similar to a field, but with  addition being multivalued. A tract is a generalization of hyperfield that also encompasses partial fields and fuzzy rings. The concept of matroids over a tract generalizes the concept of linear subspaces of a vector space $F^n$ -- in fact, when $F$ is a field then an $F$-matroid corresponds exactly to a linear subspace of $F^n$. 

One example of a hyperfield is the {\em sign hyperfield }  $\S$ (Definition~\ref{def:signhyp}). Matroids over $\S$ are exactly oriented matroids. The theory of oriented matroids is the grandmother of all theories of matroids with extra structure, with a rich theory and powerful connections to geometry and topology. Baker and Bowler's paper suggests broad generalizations of this theory across other tracts. However, two quirks of their work stand out:
\begin{enumerate}
\item they did not generalize one key axiomatization of oriented matroids, namely signed vector axioms and signed covector axioms, and
\item for a general tract $F$, there are distinct notions of {\em strong matroids over $F$} and {\em weak matroids over $F$}. When $F$ is a field or when $F=\S$, these two notions coincide, but for general tracts it was not clear which notion deserved greater prominence.
\end{enumerate}

The  present paper fills the gap (1) and weighs in on (2). We give a definition of $F$-vectors and $F$-covectors of a strong matroid over a tract $F$ so that:
\begin{enumerate}
\item if $F$ is a field, so that a (strong) $F$-matroid corresponds to a subspace of some $F^E$, then the $F$-covectors of that $F$-matroid are just the elements of the subspace, 
\item if $F=\S$, so that a (strong) $F$-matroid is an oriented matroid, then the $F$-vectors and $F$-covectors  coincide with the signed vectors and signed covectors in the sense of oriented matroids, and
\item for general tracts $F$, the relationships between $F$-(co)vectors and $F$-(co)circuits for strong $F$-matroids is similar to the relationships for oriented matroids.
\end{enumerate}
However, our definitions look substantially different from the usual oriented matroid definitions. Also, for weak $F$-matroids our $F$-vectors are not cryptomorphic to the other $F$-matroid axiom systems.  Thus the present work can be considered an argument in favor of the strong notion of $F$-matroids.

 Section 1 will give background, including a summary of some results on matroids over tracts from~\cite{BB17}  and examples of hyperfields of particular interest, introduced by Viro in~\cite{Viro1}.  Section 2 will introduce vector and covector  axioms for $F$-matroids and will state the most fundamental results --  in particular, cryptomorphisms  between these new axiomatizations  and those in~\cite{BB17} for $F$-matroids (Theorem~\ref{thm:cryptom}).  These results are proved in Section 3.  Section 4 discusses how various  matroid properties --  such as duality, deletion, and contraction --  play out for vectors and covectors. This section has some distressing surprises: several properties that we use often in the context of oriented matroids and of subspaces of vector spaces fail to hold for general $F$-matroids.  Section 5  explores some specific examples.
 
Sections 6 and 7 address the discrepancy between the vector axioms for $F$-matroids and those for oriented matroids. The Composition Axiom does not hold for general $F$-matroids: Section 6 discusses the potential role of operations similar to composition with respect to various tracts, including many of those introduced by Viro.  More surprisingly, the Elimination Axiom also does not hold for general $F$-matroids. For matroids over a particular tract, one can conjecture both weaker and stronger variations on Elimination of obvious interest, and for many $F$  it is unknown whether any of these variations hold.
\vskip 12pt

\section{Background}\label{sec:background}

This section will very briefly review definitions from Baker and Bowler's paper (\cite{BB17}), but only as a reminder to the reader who has already read the more complete treatment there. 

\begin{nota}  $E$ denotes a finite set. $[n]$ denotes $\{1,2,\ldots,n\}$. $F$ denotes a set $G\cup\{0\}$, where $(G, N_G)$ is a tract (Definition~\ref{def-tract}). 

$\0$ denotes  the vector in $F^E$ with all components $0$.

The {\bf support} of $X\in F^E$ is $\supp{X}:=\{e\in E: X(e)\neq 0\}$. 
 The {\bf zero set} of $X$ is $X^0:=\{e\in E: X(e)= 0\}$.  

The letter $i$ will always denote $\sqrt{-1}$ in $\mathbb C$.

If $X\in F^E$ and $e\in E$ then $X\backslash e$ denotes the restriction of $X$ to $E-\{e\}$. If $f\not\in E$ and $\alpha\in F$ then $Xf^\alpha$ denotes the extension of $X$ to $E\cup\{f\}$ with $X(f)=\alpha$.

For any $S\subseteq F^E$, $\minsupp(S)$ denotes the set of elements of $S$ of minimal support.

\end{nota}

\subsection{Tracts and hyperfields}

\begin{defn}\label{def-tract} (\cite{BB17}) A {\bf tract} is a multiplicative abelian group $G$ together with a subset $N_G$ of the group semiring ${\mathbb N}[G]$ satisfying all of the following.
\begin{enumerate}
\item the zero element of ${\mathbb N}[G]$ belongs to $N_G$.
\item the identity element $1$ of $G$ is not in $N_G$.
\item There is a unique element $\eta$ of $G$ with $1+\eta\in N_G$.
\item $N_G$ is closed under the  action of $G$ on $N_G$.
\end{enumerate}
\end{defn}

We often refer to the set $F=G\cup\{0\}$ as the tract, and for $g\in G$ we often denote $\eta g$ as $-g$. If $a_1, \ldots, a_k\in F$ then $a_1\hplus\cdots \hplus a_k$ (or $\bighplus_{j=1}^k a_j$) denotes $\{b\in F: -b+\sum\limits_{j=1}^k a_j\in N_G\}$. The canonical example of a tract is when $F$ is a field, $G=F-\{0\}$, and $N_G$ is the set of all formal sums of elements of $F$ that add to 0 in $F$. In this case the set $\bighplus_{j=1}^k a_j$ has exactly one element, the actual sum of the $a_j$. 

A particularly interesting class of tracts arise from {\em hyperfields}, which we now define.

\begin{defn} A {\bf hyperoperation} on a set $R$ is a map $\hplus$ from $R\times R$ to the set of nonempty subsets of $R$.

If $S$ and $T$ are subsets of $R$ then we define $S\hplus T$ to be $\bigcup_{\substack{s\in S\\ t\in T}}s\hplus t$, and for any $x\in R$ we define $x\hplus S=\{x\}\hplus S$ and $S\hplus x=S\hplus\{x\}$.
\end{defn}

Thus, for instance, if $a,b,c\in R$ then $(a\hplus b)\hplus c$ is defined to be ${\bigcup_{x\in a\hplus b} x\hplus c}$.

As in~\cite{BB17}, all hyperoperations in this paper will be commutative and associative, with identity $0$. Thus for any finite $S\subseteq R$ the sum ${\bighplus_{s\in S} s}$ is well-defined, with ${\bighplus_{s\in \emptyset} s}$ defined to be $\{0\}$.

Note that the $\hplus$ defined for a tract is not necessarily a hyperoperation, since for elements $a,b$ in a tract, $a\hplus b$ might be the empty set.

\begin{defn} [\cite{BB17})]\label{defn:hypergroup} A commutative {\bf hypergroup} is a tuple $(G,\hplus,0)$, where $\hplus$ is a commutative and associative hyperoperation on $G$, such that
\begin{enumerate}
\item  $0\hplus x=\{x\}$ for all $x\in G$.
\item  For every $x\in G$ there is a unique element of $G$, denoted $-x$, such that $0\in x\hplus -x$.
\item  For all $x$, $y$, and $z$, $x\in y\hplus z$ if and only if $z\in x\hplus (-y)$.
\end{enumerate}

A commutative {\bf hyperfield} is a tuple $(R,\htimes,\hplus,1,0)$ such that $0\neq 1$ and
\begin{enumerate}
\item $(R-\{0\},\htimes, 1)$ is a commutative group.
\item $(R, \hplus, 0)$ is a commutative hypergroup.
\item $0\htimes x=x\htimes 0=0$ for all $x\in R$.
\item $a\htimes(x\hplus y)=(a\htimes x)\hplus(a\htimes y)$ for all $a,x,y\in R$.
\end{enumerate}

\end{defn}

If $F$ is a hyperfield then let $G=F-\{0\}$ and let $N_G=\{\sum\limits_{j=1}^k a_j: 0\in\bighplus_{j=1}^k a_j\}$. Then $G$ and $N_G$ define a tract whose associated operation $\hplus$ coincides with the hyperaddition in $F$. 

If $F$ is a tract and $X,Y\in F^E$ then we define $$X\hplus Y:=\{Z: \forall e\   Z(e)\in X(e)\hplus Y(e)\}.$$ $F$ acts on $F^E$ by componentwise multiplication. (If $F$ is a hyperfield then $F^E$ is an $F$-module in the sense of~\cite{BB17}.)

\begin{defn} [\cite{BB17}] A homomorphism $(G, N_G)\to (G', N_{G'})$ of tracts is a group homomorphism $f:G\to G'$ together with a map $\tilde{f}:{\mathbb N}[G]\to{\mathbb N}[G']$ such that $\tilde{f}(\sum a_jg_j)=\sum a_jf(g_j)$ for all $a_j\in\mathbb N$ and $g_j\in G$ and $F(N_G)\subseteq N_{G'}$.
\end{defn}

We'll also use $f$ to denote the extension $(F=G\cup\{0\})\to (F'=G'\cup\{0\})$ sending 0 to 0, and we'll use $f$ to denote the componentwise map $F^E\to (F')^E$.

If $F$ and $F'$ are hyperfields then a homomorphism of the corresponding tracts amounts to a function $f: F\to F'$ such that $f(xy)=f(x)f(y)$ and $f(x\hplus y)\subseteq f(x)\hplus f(y)$ for all $x$ and $y$.

\begin{defn} [\cite{BB17}] Let $F=G\cup\{0\}$ be a tract equipped with an automorphism $c$ such that $c^2=\id$, which we call {\bf conjugation}. Denote the image of $x\in F$ under $c$ by $x^c$.  For $X,Y\in F^E$, the {\bf inner product} is defined as
$$X\htimes Y:=\sum_{e\in E} X(e)\htimes Y(e)^c\in {\mathbb N}[G].$$
We say $X$ and $Y$ are {\bf orthogonal}, denoted $X\perp Y$, if $X\htimes Y\in N_G$. If $S\subseteq F^E$, we denote by $S^\perp$ the set of all $X\in F^E$ such that $X\perp Y$ for all $Y\in S$.
\end{defn}

We will always view a tract as being equipped with $c$. When no $c$ is specified, we will take $c=\id$.

\begin{example} \label{ex:tracts}
Viro's paper~\cite{Viro1}  provides an excellent introduction to and motivation for hyperfields. Several of the following hyperfields were first introduced there.

Notice that several of these hyperfields have operations defined in terms of the usual field operations on $\R$ and $\C$. The symbol $+$, when denoting a binary operation, always denotes the usual addition.
\begin{enumerate}
\item The field $\R$ is a hyperfield, with $c=\id$.
\item The field $\C$ is a hyperfield, with $c$ being conjugation.

\item The {\bf Krasner hyperfield} $\K$ on elements $\{0,1\}$ is the unique two-element hyperfield for which $1\hplus 1=\{0, 1\}$. The automorphism $c$ is the identity.
\item \label{def:signhyp} The {\bf sign hyperfield} $\S$ on elements $\{0,+,-\}$ has addition $0\hplus x=\{x\}$ for all $x$, $x\hplus x=\{x\}$ for all $x$, and $+\hplus -=\{0,+,-\}$. Multiplication is given by $0\odot x=0$ for all $x$, $+\odot +=-\odot -=+$, and $+\odot -=-$. The automorphism $c$ is the identity.
\item \label{phasetracts} Let $S^1$ denote the unit circle in $\C$. For $x\in\C$, the {\bf phase} of $x$ is 
$$\ph(x)=\begin{cases}
0&\mbox{ if $x=0$}\\
\frac{x}{|x|}&\mbox{ otherwise}
\end{cases}
$$
The {\bf phase hyperfield} $\P$ on elements $S^1\cup\{0\}$ has addition $x\hplus y:=\{\ph(ax+by): a,b\in\R_{>0}\}$ and has multiplication inherited from $\C$. The automorphism $c$ is conjugation. (For nonzero $x$ we have $x^c=x^{-1}$, and in~\cite{AD} the inner product is described in this way.)

This is not the hyperfield that gets the name ``phase hyperfield" in~\cite{Viro1}: see~(\ref{tropicalphase}) below.

\item (\cite{Viro1}) The {\bf triangle hyperfield} $\triangle$ on elements $\R_{\geq 0}$ has addition $x\hplus y:=\{z: |x-y|\leq z\leq x+y\}$ and has multiplication inherited from $\R$. The automorphism $c$ is the identity.

\item  (\cite{Viro1}) The {\bf tropical real hyperfield} $\TR$ on elements $\R$ has addition 
$$x\hplus y:=\begin{cases}
x&\mbox{ if $|x|>|y|$ or $x=y$}\\
y&\mbox{ if $|x|<|y|$}\\
\{z: |z|\leq |x|\}&\mbox{ if $x=-y$}
\end{cases}$$
and has multiplication inherited from $\R$. The automorphism $c$ is the identity.

\item  (\cite{Viro1}) The {\bf tropical complex hyperfield}  $\TC$ on elements $\C$ has addition
$$x\hplus y:=\begin{cases}
x&\mbox{ if $|x|>|y|$}\\
y&\mbox{ if $|x|<|y|$}\\
\{|x|\ph(ax+by):a,b\in\R_{\geq 0}\}&\mbox{ if $|x|=|y|$ and $x\neq -y$}\\
\{z: |z|\leq |x|\}&\mbox{ if $x=-y$}
\end{cases}$$
and has multiplication inherited from $\C$. The automorphism $c$ is conjugation.

\item \label{tropicalphase} (\cite{Viro1}) The {\bf tropical phase hyperfield} $\TP$ on elements $S^1\cup\{0\}$ (called the {\em phase hyperfield} in~\cite{Viro1}) has addition 
$$x\hplus y:=\begin{cases}
S^1\cup\{0\}&\mbox{ if $x=-y\neq 0$}\\
\{\ph(ax+by): a,b\in\R_{\geq0}\}&\mbox{otherwise}
\end{cases}$$
 and has multiplication inherited from $\C$.  The automorphism $c$ is conjugation.

\item  (\cite{Viro1}) The {\bf ultratriangle hyperfield} $\T\triangle$  on elements $\R_{\geq 0}$ has addition
$$x\hplus y=\begin{cases}
\max(x,y)&\mbox{ if $x\neq y$}\\
\{z:z\leq x\}&\mbox{ if $x=y$}
\end{cases}$$
and has multiplication inherited from $\R$.  The automorphism $c$ is the identity.
\end{enumerate}
\end{example}

\begin{example} Here are some examples of conjugation-preserving morphisms of tracts.

 \begin{enumerate}
 \item The inclusion $\R\to\C$ is a morphism of tracts.
\item For every tract $F$ the function $\kappa:F\to \K$ sending each nonzero element of $F$ to $ 1$ is a morphism of tracts. 
\item The function $\ph: \C\to\P$ introduced in Example~\ref{ex:tracts}.\ref{phasetracts} is a morphism of tracts. It restricts to a morphism $\sign: \R\to\S$.
\item Other morphisms which are interesting but will not be considered further here include 
\begin{enumerate} 
\item  $\C\to\triangle$ taking each $x\in\C$ to $|x|$
\item $\sign:\TR\to \S$ 
\item $\ph: \TC\to\TP$
\item $\T\C\to\T\triangle$ taking each $x\in\T\C$ to $|x|$
\end{enumerate}
\end{enumerate}
\end{example}

\subsection{Matroids over tracts}

\begin{defn}\cite{BB17} Let $L$ be a finite lattice with minimal element $\hat{0}$. An element $a\in L$ is an {\bf atom} if $a\neq\hat{0}$ and there is no $b\in L$ such that $\hat{0}<b<a$. A set $S$ of atoms in $L$ is a {\bf modular family} if the height of $\bigvee S$ in $L$ is $|S|$.

If $\CC$ is a subset of $F^E$ then  $S\subseteq\CC$ is called a {\bf modular family} if the set of supports of elements of $S$ is a modular family in the lattice of unions of supports of elements of $\CC$.
\end{defn} 

\begin{defn}[\cite{BB17}] \label{strongcircaxs}  Let $E$ be a nonempty finite set, $F$  a tract, and $\CC\subseteq F^E$.  We say $\CC$ is  the set of {\bf  $F$-circuits of a strong $F$-matroid on $E$} if $\CC$ satisfies all of the following axioms.
\begin{description}
\item [Nontriviality] $\0\not\in \CC$.
\item [Symmetry] If $X\in \CC$ and $\alpha\in F-\{0\}$ then $\alpha X\in \CC$.
\item [Incomparability] If $X,Y\in \CC$ and $\supp{X}\subseteq\supp{Y}$ then there exists $\alpha\in F-\{0\}$ such that $X=\alpha Y$.
\item [Strong modular elimination] Let $\{X_1, \ldots, X_k, X\}\subseteq\CC$ be a modular family of size $k+1$ such that $\supp{X}\not\subseteq\cup_{j=1}^k\supp{X_j}$, and for each $1\leq j\leq k$ let $e_j\in (\supp{X}\cap \supp{X_j})\backslash\cup_{l\neq j} \supp{X_l}$ be such that $X(e_j)=-X_j(e_j)$. Then there is an $F$-circuit $Z\in\CC$ such that $Z(e_j)=0$ for each $1\leq j\leq k$ and $Z\in X\hplus \bighplus_{j=1}^k X_j$.
\end{description}
\end{defn}

\begin{remark} Baker and Bowler also defined {\em weak $F$-matroids}, which only require Strong Modular Elimination for modular families of size 2. 

If $F$ is a field, $F=\K$, or $F=\S$, then every weak $F$-matroid is a strong $F$-matroid. If $F$ is a tract for which strong and weak $F$-matroids coincide, we will refer simply to {\bf $F$-matroids}.
\end{remark}

The following is part of Theorem 2.24 in~\cite{BB17}.
\begin{thm} Let $\CC$ be the $F$-circuit set of a strong $F$-matroid. Then $\CC^*:=\minsupp(\CC^\perp-\{\0\})$ is also the $F$-circuit set of a strong  $F$-matroid, and $(\CC^*)^*=\CC$.
\end{thm}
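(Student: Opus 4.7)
The plan is to reduce to the Grassmann--Plücker axiomatization of strong $F$-matroids from \cite{BB17}. The $F$-circuit set $\CC$ corresponds, up to a global $F^\times$-scalar, to a Grassmann--Plücker function $\varphi : E^r \to F$, where $r$ is the rank of the underlying matroid $\underline{M}$. Duality is most cleanly expressed at this level: construct a dual GP function and then read off its $F$-circuits.

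Fix a linear ordering of $E$ and set $n = |E|$. Define $\varphi^* : E^{n-r} \to F$ by
\[
\varphi^*(e_1, \ldots, e_{n-r}) = \sgn(\sigma)\, \varphi(e_{n-r+1}, \ldots, e_n),
\]
whenever $(e_1, \ldots, e_n)$ is a permutation $\sigma$ of the fixed ordering, and let $\varphi^*$ vanish on tuples with repeats. A careful bookkeeping of signs shows that the three-term $F$-GP relations for $\varphi^*$ reduce to three-term $F$-GP relations for $\varphi$ on complementary index sets, so $\varphi^*$ is itself a Grassmann--Plücker function. It therefore defines a strong $F$-matroid $M^*$ on $E$, whose underlying matroid is the classical dual $\underline{M}^*$. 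Let $\DD$ denote the $F$-circuit set of $M^*$.

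The next step is to prove $\DD = \minsupp(\CC^\perp-\{\0\})$. Using Baker and Bowler's basis-exchange descriptions of $X \in \CC$ and $Y \in \DD$ in terms of $\varphi$ and $\varphi^*$, the inner product $X \htimes Y$ expands to a sum which lies in $N_G$ via the GP relations for $\varphi$, so $\DD \subseteq \CC^\perp$. Since the supports of $\DD$ are exactly the cocircuits of $\underline{M}$, this already gives $\DD \subseteq \minsupp(\CC^\perp - \{\0\})$. For the reverse inclusion, any nonzero $Z \in \CC^\perp$ has $\supp{Z}$ containing a cocircuit of $\underline{M}$, since otherwise $\supp{Z}$ would be coindependent and one could find $X \in \CC$ meeting $\supp{Z}$ in a single element, contradicting $X \perp Z$. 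If $\supp{Z}$ is minimal in $\CC^\perp-\{\0\}$, then $\supp{Z}$ equals some cocircuit $D$, and imposing $X \perp Z$ as $X$ ranges over the fundamental circuits of $\underline{M}$ relative to a basis $B$ with $D = E \setminus (B - e)$ forces $Z$ to be an $F^\times$-multiple of the unique-up-to-scaling $F$-circuit of $M^*$ supported on $D$. Hence $\CC^* = \DD$. The involution $\varphi^{**} = \pm\varphi$ (the global sign is absorbed into the ambient $F^\times$-scaling of a GP function) then yields $(\CC^*)^* = \CC$.

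The main obstacle is the uniqueness step above: that a minimal-support element of $\CC^\perp$ is determined by its support up to an $F^\times$-scalar. Over a general tract, orthogonality produces tract-sum relations rather than linear equations, and pinning down the components of $Z$ requires enough fundamental circuits to live in $\CC$ to make the induced system of constraints rigid. This is exactly what strong modular elimination (Definition~\ref{strongcircaxs}) supplies, and is the reason the duality theorem belongs naturally to the strong theory rather than the weak one.
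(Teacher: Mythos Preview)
The paper does not prove this statement at all: it is quoted as ``part of Theorem 2.24 in~\cite{BB17}'' and simply cited. Your sketch via Grassmann--Pl\"ucker functions is essentially the route Baker and Bowler themselves take, so in that sense your approach matches the original source rather than the present paper.

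Two comments on the sketch itself. First, when you write that ``the three-term $F$-GP relations for $\varphi^*$ reduce to three-term $F$-GP relations for $\varphi$,'' recall that \emph{strong} $F$-matroids correspond to GP functions satisfying \emph{all} Pl\"ucker relations, not just the three-term ones; the sign bookkeeping you allude to must be carried out for the general relations (the argument is the same, but the phrasing as written would only verify weakness of the dual).

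Second, your final paragraph misidentifies the obstacle. The uniqueness step you actually wrote down uses only two-term orthogonality: if $\supp{Z}\cap\supp{X}=\{e,f\}$ then $Z(e)X(e)^c+Z(f)X(f)^c\in N_G$, and the tract axioms (unique $\eta$ with $1+\eta\in N_G$, plus $G$-invariance of $N_G$) force $Z(f)=-Z(e)X(e)^c/X(f)^c$. Strong modular elimination plays no role there. In fact, Baker and Bowler prove the duality theorem for \emph{both} strong and weak $F$-matroids, so your claim that it ``belongs naturally to the strong theory rather than the weak one'' is not correct. The strong/weak distinction matters for which Pl\"ucker relations you must verify for $\varphi^*$, not for the minimal-support uniqueness argument.
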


A {\bf  strong $F$-matroid}  is the information encoded in a set $\CC$ satisfying Definition~\ref{strongcircaxs}. Thus a strong $F$-matroid $\MM$ has an associated pair $(\CC(\MM),\CC^*(\MM))$. $\CC(M)$ is the set of {\bf $F$-circuits of $\MM$}, and $\CC^*(\MM)$   is the set of {\bf $F$-cocircuits of $\MM$}. The strong $F$-matroid with $F$-circuit set $\CC^*(\MM)$ and $F$-cocircuit set $\CC(\MM)$ is called the {\bf dual} to $\MM$ and is denoted $\MM^*$.

\begin{remark} Baker and Bowler give another characterization of $F$-matroids as well, via {\em Grassmann-Pl\"ucker functions}, which generalize chirotopes of oriented matroids.
\end{remark}

\begin{example} Matroids in the usual sense are essentially $\K$-matroids. Specifically, if $S\subseteq E$ let $X_S\in \K^E$ be the indicator function for $S$ (so $X_S(e)= 1$ if and only if $e\in S$). Then $C\subseteq 2^E$ is the set of circuits of a matroid in the usual sense if and only if $\{X_S: S\in C\}$ is the set of $\K$-circuits of a $\K$-matroid (Corollary 1 in~\cite{Del11}).

When we refer to a {\em matroid} without modification, we mean a matroid in the traditional sense -- an object with circuits which are subsets of $E$. When we wish to use the language of matroids over tracts, we will talk about $\K$-matroids.
\end{example}

\begin{example} $\CC\subseteq\S^E$ is the set of $\S$-circuits of an $\S$-matroid if and only if $\CC$ is the set of signed circuits of an oriented matroid. This follows from Corollary 1 in~\cite{Del11} together with Theorem 3.6.1 in~\cite{BLSWZ}.
\end{example}

\begin{example} If $K$ is a field, 
then $\CC^*\subseteq K^E$ satisfies the $K$-circuit axioms if and only if $\CC^*=\minsupp(V-\{\0\})$ for some linear subspace $V$ of $K^E$ (\cite{BB17}). We call the $K$-matroid with $K$-cocircuit set $\CC^*$ the {\bf $K$-matroid corresponding to $V$.}
\end{example}

\begin{prop}[\cite{BB17}] \label{prop:morph} A conjugation-preserving morphism $f:F\to F'$ of tracts induces a map $f_*$ from (weak resp.\ strong) $F$-matroids to  (weak resp.\ strong) $F'$-matroids so that 
$$\CC(f_*(\MM))=\{\alpha f(X): X\in\CC(\MM), \alpha\in F'-\{0\}\}$$
and 
$$\CC^*(f_*(\MM))=\{\alpha f(X): X\in\CC^*(\MM), \alpha\in F'-\{0\}\}.$$
\end{prop}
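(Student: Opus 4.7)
The plan is to set $\CC' := \{\alpha f(X) : X \in \CC(\MM),\ \alpha \in F'-\{0\}\}$ and $\DD' := \{\alpha f(X) : X \in \CC^*(\MM),\ \alpha \in F'-\{0\}\}$, verify that $\CC'$ satisfies the $F'$-circuit axioms of Definition~\ref{strongcircaxs} (or their weak analogue when $\MM$ is weak), define $f_*(\MM)$ to be the resulting $F'$-matroid, and then check that its cocircuit set equals $\DD'$. Two basic facts drive everything: (i) because $f|_G$ is a group homomorphism, the extension $f:F\to F'$ satisfies $f^{-1}(0)=\{0\}$, so $f$ and scalar multiplication by nonzero elements preserve supports; and (ii) the morphism condition on $\tilde f$ gives $f(a\hplus b)\subseteq f(a)\hplus f(b)$, hence $f(Z)\in f(Y_0)\hplus\bighplus_j f(Y_j)$ whenever $Z\in Y_0\hplus\bighplus_j Y_j$.

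Nontriviality of $\CC'$ is immediate from (i), Symmetry is built into the definition, and Incomparability reduces to the same axiom in $\CC(\MM)$: if $\alpha_1 f(Y_1),\ \alpha_2 f(Y_2)\in\CC'$ have nested supports, then (i) gives $\supp{Y_1}\subseteq\supp{Y_2}$, so $Y_1=\beta Y_2$ for some $\beta\in F-\{0\}$, and $\alpha_1 f(Y_1)=\alpha_1 f(\beta)\alpha_2^{-1}\cdot\alpha_2 f(Y_2)$.

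The real work, and the expected main obstacle, is Strong Modular Elimination. Given a modular family $\{X_1,\dots,X_k,X\}\subseteq\CC'$ with distinguished $e_j\in(\supp{X}\cap\supp{X_j})\setminus\bigcup_{l\neq j}\supp{X_l}$ satisfying $X(e_j)=-X_j(e_j)$, choose lifts $X_j=\alpha_j f(Y_j)$ and $X=\alpha f(Y)$. By (i) the family $\{Y_1,\dots,Y_k,Y\}$ has identical supports, so it is modular in $\CC(\MM)$ with the same distinguished $e_j$, but the sign condition $X(e_j)=-X_j(e_j)$ only lives in $F'$. Bridge this via Symmetry in $\CC(\MM)$: set $\beta_j := -Y(e_j)\cdot Y_j(e_j)^{-1}\in F-\{0\}$, so $(\beta_jY_j)(e_j)=-Y(e_j)$. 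Strong Modular Elimination in $\CC(\MM)$ then produces $Z\in\CC(\MM)$ with $Z(e_j)=0$ for all $j$ and $Z\in Y\hplus\bighplus_j \beta_jY_j$. Applying $f$ and multiplying by $\alpha$, tool (ii) gives $\alpha f(Z)\in \alpha f(Y)\hplus\bighplus_j \alpha f(\beta_j)f(Y_j)$; the identity $\alpha_j f(Y_j(e_j))=-\alpha f(Y(e_j))$ (the original sign condition), combined with $f(-x)=-f(x)$ — which follows from $\tilde f(N_G)\subseteq N_{G'}$ applied to $1+\eta\in N_G$ and uniqueness of $\eta'$ — yields $\alpha f(\beta_j)=\alpha_j$, so the right side coincides with $X\hplus\bighplus_j X_j$. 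Thus $\alpha f(Z)\in\CC'$ satisfies every requirement, and the weak case is the same argument with $k=1$. Running this construction on $\MM^*$ produces an $F'$-matroid with circuit set $\DD'$, and the identification $\CC^*(f_*(\MM))=\DD'$ reduces, via $\tilde f(N_G)\subseteq N_{G'}$ together with $f$'s commutation with conjugation, to the inclusion $\DD'\subseteq(\CC')^\perp$ paired with the observation that supports of elements of $\DD'$ remain minimal in $(\CC')^\perp-\{\0\}$ by (i) and Incomparability for $\CC^*(\MM)$.
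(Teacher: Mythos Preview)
The paper does not prove this proposition at all: it is quoted from \cite{BB17} without proof, so there is no argument in the paper to compare against. Your approach is the natural one and is essentially correct. The verification of Nontriviality, Symmetry, and Incomparability is fine, and your handling of Strong Modular Elimination is the right idea: lift the family to $\CC(\MM)$ using that $f$ preserves supports, rescale via Symmetry to force the sign conditions $\beta_jY_j(e_j)=-Y(e_j)$ in $F$, eliminate there, and push the result back through $f$. The computation $\alpha f(\beta_j)=\alpha_j$ is correct.

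The only place I would ask you to say a bit more is the final clause on cocircuits. You argue $\DD'\subseteq(\CC')^\perp$ and that elements of $\DD'$ have minimal support there; this gives $\DD'\subseteq\CC^*(f_*(\MM))$. For the reverse inclusion you should make explicit that the underlying matroid of $f_*(\MM)$ coincides with that of $\MM$ (immediate from your fact (i)), so every $W\in\CC^*(f_*(\MM))$ has the same support as some $f(Y)$ with $Y\in\CC^*(\MM)$, and then Incomparability in $\CC^*(f_*(\MM))$ forces $W$ to be a scalar multiple of $f(Y)$, hence $W\in\DD'$. This is implicit in what you wrote but deserves one more sentence.
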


\begin{example} If $\MM$ is an $F$-matroid then the ordinary matroid corresponding to $\kappa_*(\MM)$ is called the {\bf  underlying matroid} of $\MM$.

Since underlying matroids are well-defined, we can refer to classical matroid properties of an $F$-matroid, e.g.\  bases and circuits. (For instance, a circuit of an $F$-matroid $\MM$ is defined to be a circuit of the underlying matroid, hence is the support of an $F$-circuit of $\MM$.) 
\end{example}

\begin{example} If $V$ is a linear subspace of $\R^n$ and $\MM_V$ is the $\R$-matroid associated to $V$ then $\sign_*(\MM)$  is the oriented matroid associated to $V$.
 Similar comments hold for subspaces of $\C^n$ and strong $\P$-matroids.
\end{example}

\begin{cor}\label{cor:underlying} If $\MM$ is a (weak or strong) $F$-matroid then there is a bijection $G X\to\supp{X}$ from the set of $G$-orbits of circuits of $\MM$ to the set of circuits of the underlying matroid.
\end{cor}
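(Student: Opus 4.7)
The plan is to unwind the definition of the underlying matroid using Proposition~\ref{prop:morph} and then read off the bijection from the Incomparability axiom, which is shared between the weak and strong circuit axiomatizations.

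First I would apply Proposition~\ref{prop:morph} to the morphism $\kappa: F \to \K$. Since $\K - \{0\} = \{1\}$, the proposition gives
\[
\CC(\kappa_*(\MM)) = \{\kappa(X) : X \in \CC(\MM)\}.
\]
For any $X \in F^E$, the element $\kappa(X) \in \K^E$ is by definition the indicator function of $\supp{X}$, so under the identification of $\K$-circuits with their supports (Corollary~1 of~\cite{Del11}, cited in the excerpt), the circuits of the underlying matroid are exactly $\{\supp{X} : X \in \CC(\MM)\}$.

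Next I would verify the map $\Phi\colon GX \mapsto \supp{X}$ is well defined: multiplication by $\alpha \in G$ does not change the support of $X$, so $\supp{\alpha X} = \supp{X}$ for all $\alpha \in G$, and hence $\Phi$ is constant on $G$-orbits. Surjectivity is then immediate from the computation in the previous paragraph. For injectivity, suppose $\Phi(GX) = \Phi(GY)$, i.e.\ $\supp{X} = \supp{Y}$. Then Incomparability (which appears identically in the weak and strong circuit axioms; only the elimination axiom differentiates them) furnishes $\alpha \in F - \{0\} = G$ with $Y = \alpha X$, so $GX = GY$.

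There is no real obstacle here: the statement is essentially a bookkeeping consequence of Proposition~\ref{prop:morph} and Incomparability. The only point worth pausing over is that the corollary is asserted for both weak and strong $F$-matroids, so the argument must avoid invoking Strong Modular Elimination; but since only Nontriviality, Symmetry, and Incomparability are used, this is automatic.
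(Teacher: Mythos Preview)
Your argument is correct and is exactly the unwinding the paper has in mind: the corollary is stated there without proof, as an immediate consequence of Proposition~\ref{prop:morph} applied to $\kappa$ together with Incomparability, and you have supplied precisely those details.
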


\section{Main Theorem: $F$-vectors}

This section introduces our new axiomatization of strong $F$-matroids.
The idea is to generalize the following vision of subspaces of a vector space: let $K$ be a field and $V$ the row space of a matrix $M$ over $K$ with columns indexed by $E$. A subset $B$ of $E$ is a basis of the $K$-matroid corresponding to $V$ if and only if the set of columns indexed by $B$ is a basis for the column space. In this case $M$ is equivalent by row operations to a matrix $M_B$ such that the submatrix of $M_B$ with columns indexed by $B$
is an identity matrix. We call $M_B$ a {\em reduced row-echelon form for $M$ with respect to $B$}. A necessary 
condition for an element $X$ of $K^E$ to be in $V$ is that $X$ must be a linear combination of the rows of $M_B$. (Of course, because $K$ is a field this is also a sufficient condition, but for general tracts, and even hyperfields, this will take more thought.)

Let $\{R_j:j\in B\}$ denote the set of rows of $M_B$, where $R_j$ is the unique element of $V$ such that for all $k\in B$, $R_j(k)=\delta_{jk}$.  (Here 
 $\delta_{jk}$ denotes the Kronecker delta.)
The above condition for $X$ to be in $V$ can be rephrased as
\begin{equation}\label{eqn:rref}
X=\sum_{j\in B} X(j)R_j
\end{equation}

Our generalization of rows in reduced-row-echelon-form matrices is $F$-cocircuits, and our definition of $F$-covectors is vectors satisfying Equation~(\ref{eqn:rref}) (with hypersum $\boxplus$ in place of sum) with respect to every basis. 

In order to get a stand-alone definition of $F$-vectors and $F$-covectors, we first need to define bases without reference to circuits or cocircuits.

\begin{defn} Let $F$ be a tract, $E$ a finite set, and $\WW\subseteq F^E$. 
A {\bf support basis} of $\WW$ is a minimal $B\subseteq E$ such that $B\cap\supp{X}\neq\emptyset$ for every $X\in\WW-\{\0\}$.
\end{defn}

\begin{lemma}\label{lem:supportbasis1}  1. For any $\WW\subseteq F^E$, the set of support bases of $\WW$ is the set of support bases of $\minsupp(\WW-\{\0\})$.

2. If $\MM$ is a (weak or strong) $F$-matroid on $E$ and $B\subseteq E$ then $B$ is a basis for $\MM$ if and only if $B$ is a support basis for $\CC^*(\MM)$.
\end{lemma}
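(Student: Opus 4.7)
For part 1, the argument is purely combinatorial about supports. In one direction, if $B$ meets $\supp{X}$ for every nonzero $X\in\WW$, then in particular $B$ meets $\supp{X}$ for every nonzero $X\in\minsupp(\WW-\{\0\})\subseteq\WW$. Conversely, since $E$ is finite, every nonzero $X\in\WW$ contains (in support) some $Y\in\minsupp(\WW-\{\0\})$, so $B\cap\supp{X}\supseteq B\cap\supp{Y}\neq\emptyset$. Thus the two ``meets every nonzero support'' conditions are equivalent, and the minimal such $B$'s coincide. This gives the equality of support-basis sets.

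For part 2, the strategy is to show that the support bases of $\CC^*(\MM)$ are exactly the minimal transversals of the cocircuit family of the underlying matroid, which are exactly the bases. First, by the Incomparability axiom applied to $\CC^*(\MM)$, no proper support containment occurs among its elements, so $\CC^*(\MM)=\minsupp(\CC^*(\MM)-\{\0\})$. Hence a support basis of $\CC^*(\MM)$ is a minimal $B\subseteq E$ meeting $\supp{X}$ for every $X\in\CC^*(\MM)-\{\0\}$. Next, Corollary~\ref{cor:underlying} applied to the dual matroid $\MM^*$ identifies the set $\{\supp{X}:X\in\CC^*(\MM)\}=\{\supp{X}:X\in\CC(\MM^*)\}$ with the circuit set of the underlying matroid of $\MM^*$, which is the cocircuit set of the underlying matroid of $\MM$. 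So the support bases of $\CC^*(\MM)$ are precisely the minimal subsets of $E$ meeting every cocircuit of the underlying matroid. By the classical cryptomorphism between bases and cocircuits of an ordinary matroid (a set is a basis iff it is an inclusion-minimal transversal of the cocircuit family), these are exactly the bases of the underlying matroid, hence the bases of $\MM$.

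\textbf{Main obstacle.} The step that needs the most care is the identification of $\{\supp{X}:X\in\CC^*(\MM)\}$ with the cocircuits of the underlying matroid, since Corollary~\ref{cor:underlying} is phrased for $F$-circuits and the underlying matroid of $\MM$, not for $F$-cocircuits. The cleanest route is to apply the corollary to $\MM^*$ and invoke the compatibility of the $\kappa$ morphism with duality, i.e.\ $\kappa_*(\MM^*)=(\kappa_*\MM)^*$; this is essentially built into Baker--Bowler's framework, since the matroid duality $(\CC^*)^*=\CC$ in the tract setting is defined so as to refine ordinary matroid duality. Once that identification is in hand, everything else is either the straightforward support-chasing of part 1 or classical matroid theory.
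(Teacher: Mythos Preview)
Your proof is correct and follows essentially the same approach as the paper. The paper's own proof is extremely terse---it says (1) is clear and (2) follows from Corollary~\ref{cor:underlying} together with Proposition~2.1.16 in Oxley---and your argument simply spells out these two ingredients: the support-chasing for part~1, and for part~2 the identification (via Corollary~\ref{cor:underlying}, applied to $\MM^*$) of $\{\supp{X}:X\in\CC^*(\MM)\}$ with the cocircuits of the underlying matroid, followed by the classical fact that bases are the minimal transversals of the cocircuit family.
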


\begin{proof} (1) is clear. (2) follows from Corollary~\ref{cor:underlying} together with Proposition 2.1.16 in~\cite{Oxley}.
\end{proof}

\begin{defn} Let $B$ be a support basis for $\WW$. 
A {\bf nearly reduced row-echelon form} for $\WW$ with respect to $B$ is a subset $\{S_j:j\in B\}$ of $\WW$ such that $\supp{S_j}\cap B=\{j\}$ for each $j\in B$. A {\bf reduced row-echelon form} for $\WW$ with respect to $B$ is a nearly reduced row-echelon form $\{R_j:j\in B\}$ such that $R_j(k)=\delta_{jk}$ for each $j,k\in B$.
\end{defn}

In other words, a reduced row-echelon form is a nearly reduced row-echelon form $\{S_j:j\in B\}$ such that $S_j(j)=1$ for every $j$. A reduced row-echelon form with respect to $B=\{b_1, \ldots, b_r\}$ is a subset $\{R_{b_j}:j\in[r]\}$ of $\WW$ such that, if $(b_1, \ldots, b_r)$ is extended to an ordering $(b_1, \ldots, b_n)$ of $E$, then $\{R_{b_j}:j\in[r]\}$ of $\WW$ is the set of rows of a matrix with columns indexed by $E$ of the form $(I|A)$.

\begin{lemma}\label{lem:minrref} Let $\WW\subseteq F^E$ and $B$ a  support basis.
\begin{enumerate}
\item  $\WW$ has at least one nearly reduced row-echelon form with respect to $B$.
\item If $\WW$ is closed under  multiplication by $F-\{0\}$ then $\WW$ has at least one reduced row-echelon form with respect to $B$. 
\end{enumerate}
\end{lemma}

The proof is straightforward.

\begin{lemma}\label{lem:circrref} If $\MM$ is a (weak or strong) $F$-matroid then, for every basis $B$, $\CC^*(\MM)$ has a unique reduced row-echelon form with respect to $B$.
\end{lemma}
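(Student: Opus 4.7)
The plan is to reduce everything to the classical fact about fundamental cocircuits, using Corollary~\ref{cor:underlying} to translate between $G$-orbits of $F$-cocircuits and cocircuits of the underlying matroid.

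First I would note that, by Lemma~\ref{lem:supportbasis1}(2), $B$ is a support basis for $\CC^*(\MM)$, so the notion of a reduced row-echelon form with respect to $B$ is well-defined. Next I would invoke the standard matroid fact (the dual of Proposition~2.1.16 of~\cite{Oxley}, already cited in the proof of Lemma~\ref{lem:supportbasis1}) that for each $j\in B$ there is a unique cocircuit $C^*_j$ of the underlying matroid with $C^*_j\cap B=\{j\}$, namely the fundamental cocircuit through $j$ with respect to $B$.

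For existence, I would apply Corollary~\ref{cor:underlying} to $\MM^*$: the $G$-orbits of elements of $\CC^*(\MM)$ biject, via support, with cocircuits of the underlying matroid. Hence there is a unique $G$-orbit $O_j\subseteq\CC^*(\MM)$ whose elements have support $C^*_j$. Picking any $X\in O_j$, the value $X(j)$ lies in $G$, and Symmetry lets me rescale to $R_j:=X(j)^{-1}X\in\CC^*(\MM)$; then $R_j(j)=1$ and $R_j(k)=0$ for every $k\in B\setminus\{j\}$, since $\supp{R_j}=C^*_j$ meets $B$ only at $j$. The collection $\{R_j:j\in B\}$ is then a reduced row-echelon form for $\CC^*(\MM)$ with respect to $B$.

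For uniqueness, suppose $\{R'_j:j\in B\}$ is any reduced row-echelon form. Then $\supp{R'_j}$ is a cocircuit of the underlying matroid meeting $B$ in exactly $\{j\}$, so it must equal $C^*_j$. Thus $R'_j\in O_j$, meaning $R'_j=\alpha R_j$ for some $\alpha\in G$; evaluating at $j$ forces $\alpha=1$, giving $R'_j=R_j$. The whole argument is essentially a thin wrapper around the classical fundamental-cocircuit result, and I do not anticipate a substantive obstacle --- the only real care needed is the normalization step that pins down the scalar via the value at $j$, which in turn relies on the bijection of Corollary~\ref{cor:underlying} (itself a consequence of Incomparability).
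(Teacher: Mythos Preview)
Your proposal is correct and follows essentially the same approach as the paper: the paper defers the work to the immediately following Lemma~\ref{lem:fundcirc}, whose proof also invokes the classical fundamental-(co)circuit fact from Oxley, lifts it to $\CC^*(\MM)$ via the bijection of Corollary~\ref{cor:underlying} (equivalently, Proposition~\ref{prop:morph} plus Symmetry/Incomparability), and normalizes by the value at $j$. The only cosmetic difference is that the paper packages this as the definition of the fundamental $F$-cocircuit $FC^*(j,B)$, while you argue inline.
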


This follows  from the following lemma, which introduces the idea of {\bf fundamental $F$-circuit $FC(e,B)$} and  {\bf fundamental $F$-cocircuit $FC^*(e,B)$}. The reduced row-echelon form promised by Lemma~\ref{lem:circrref} is $\{FC^*(j,B):j\in B\}$.

\begin{lemma}\label{lem:fundcirc} Let $\MM$ be a (weak or strong) $F$-matroid and $B$ a basis for $\MM$.
 \begin{enumerate}
\item  If $e\in E-B$ then there is a unique element of $\CC(\MM)$, denoted $FC(e,B)$, such that $\supp{FC(e,B)}\subseteq B\cup\{e\}$ and $FC(e,B)(e)=1$.
\item If $j\in B$ then there is a unique element of $\CC^*(\MM)$, denoted $FC^*(j,B)$, such that $\supp{FC^*(j,B)}\subseteq (E-B)\cup\{j\}$ and $FC^*(j,B)(e)=1$.
\item If $X\in\CC(\MM)$ and $e\in\supp{X}$ then there is a basis $B'$ for $\MM$ such that $X$ is a scalar multiple of $FC(e,B')$.
\end{enumerate}
\end{lemma}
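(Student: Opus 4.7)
The plan is to reduce all three claims to the classical ``fundamental circuit'' statements for the underlying matroid and then lift them to $F$-circuits and $F$-cocircuits using Corollary~\ref{cor:underlying} together with the Symmetry and Incomparability axioms. The key observation is that Corollary~\ref{cor:underlying} identifies $G$-orbits of $F$-circuits of $\MM$ with circuits of the underlying matroid, so questions of ``existence of an $F$-circuit with prescribed support'' and ``uniqueness up to scalar'' are already settled; the only remaining task is to pick the correct scalar normalization via Symmetry.

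For (1), the classical fundamental-circuit lemma says that for a basis $B$ of the underlying matroid and $e\in E-B$ there is a unique circuit $C\subseteq B\cup\{e\}$, and necessarily $e\in C$. By Corollary~\ref{cor:underlying} there is some $Y\in\CC(\MM)$ with $\supp{Y}=C$, so $Y(e)\in G$ is invertible; by Symmetry $FC(e,B):=Y(e)^{-1}Y$ is an $F$-circuit with $\supp{FC(e,B)}\subseteq B\cup\{e\}$ and value $1$ at $e$. For uniqueness, any other candidate $Z$ has $Z(e)=1\neq 0$, so its support is a nonempty subset of $B\cup\{e\}$ that is a circuit of the underlying matroid, forcing $\supp{Z}=C=\supp{FC(e,B)}$; then Incomparability gives $Z=\alpha FC(e,B)$ for some $\alpha\in G$, and evaluating at $e$ yields $\alpha=1$.

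For (2), apply (1) to the dual matroid $\MM^*$, whose basis complementary to $B$ is $E\setminus B$. For $j\in B$, part (1) applied to the pair $(\MM^*,E\setminus B)$ yields a unique $F$-circuit of $\MM^*$ (equivalently, $F$-cocircuit of $\MM$) with support contained in $(E\setminus B)\cup\{j\}$ and value $1$ at $j$; this is the desired $FC^*(j,B)$. For (3), given $X\in\CC(\MM)$ and $e\in\supp{X}$, let $C=\supp{X}$, which is a circuit of the underlying matroid. Then $C\setminus\{e\}$ is independent, so extend it to a basis $B'$ of $\MM$; now $\supp{X}=C\subseteq B'\cup\{e\}$ and $X(e)\neq 0$, so Symmetry makes $X(e)^{-1}X$ an $F$-circuit meeting the defining conditions of $FC(e,B')$. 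By the uniqueness from (1), $X(e)^{-1}X=FC(e,B')$, i.e.\ $X$ is a scalar multiple of $FC(e,B')$.

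There is no real obstacle: once one has Corollary~\ref{cor:underlying} in hand, the proof is essentially a bookkeeping exercise combining classical matroid facts (existence of a unique fundamental circuit in $B\cup\{e\}$; extendability of independent sets to bases; duality of bases) with the Symmetry axiom for normalization and the Incomparability axiom for uniqueness up to scalar. The mildest point of care is in (2), where one must correctly identify $FC^*(j,B)$ in $\MM$ with $FC(j,E\setminus B)$ in $\MM^*$ so that the support conditions match.
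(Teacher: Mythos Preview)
Your proof is correct and follows essentially the same route as the paper's: reduce to the classical fundamental-circuit facts for the underlying matroid, then lift via Corollary~\ref{cor:underlying} and use Symmetry for normalization and Incomparability for uniqueness. Your treatment of (2) via duality is slightly more explicit than the paper's ``similar'' remark, but the content is the same.
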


\begin{proof} 1. An elementary result of matroid theory (cf. 1.2.6 in~\cite{Oxley}) says that
 there is a unique circuit $C(e,B)$ of the underlying matroid of $\MM$ such that $e\in C(e,B)\subseteq B\cup\{e\}$.
  By Proposition~\ref{prop:morph} there is an element of $\CC(\MM)$ with support $C(e,B)$, and by Symmetry and Incomparability there is a unique such element with value 1 on $e$.
  
 The proof of (2) is similar.
 
 The proof of (3) follows from a standard  matroid theory result: $\supp{X}-\{e\}$ is independent in $\MM$, hence extends to a basis $B'$. Uniqueness of $C(e,B')$ implies that $\supp{X}=C(e,B')$, and so the result follows from Corollary~\ref{cor:underlying}.
  \end{proof}

\begin{defn} 
Let $X, Y_1, \ldots, Y_k\in F^E$. We say $X$ is a {\bf linear combination of $\{Y_1, \ldots, Y_k\}$} if there are $\alpha_1,\ldots,\alpha_k\in F$ such that $X\in\bighplus_{j=1}^k \alpha_j Y_j$.
\end{defn}

The following is clear but is important enough to be a lemma:
\begin{lemma}\label{lem:coeffs} If  $B\subseteq E$ and $\{R_j: j\in B\}\subseteq F^E$ satisfies $R_j(k)=\delta_{jk}$ for all $k\in B$ then $X\in F^E$ is a linear combination of $\{R_j: j\in B\}$ if and only if, for each $e\in E$,
$$X(e)\in\bighplus_{j\in B} X(j)R_j(e)=\bighplus_{j\in B\cap\supp{X}} X(j)R_j(e).$$
\end{lemma}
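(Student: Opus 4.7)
\medskip

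The plan is to unpack the definition of linear combination and show that the coefficients $\alpha_j$ are forced to equal $X(j)$ whenever $j\in B$. First, I would note that by definition of hyperaddition on $F^E$, the statement $X\in\bighplus_{j\in B}\alpha_j R_j$ is equivalent to $X(e)\in\bighplus_{j\in B}\alpha_j R_j(e)$ holding for every $e\in E$. The crucial reduction is to evaluate this at $e=k\in B$: since $R_j(k)=\delta_{jk}$, the sum on the right collapses to $\alpha_k$ plus many copies of $0$. So it suffices to establish the identity property $\alpha\hplus 0=\{\alpha\}$ for all $\alpha\in F$.

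To verify the identity property directly from the tract axioms, I would unfold $\alpha\hplus 0=\{b\in F:-b+\alpha\in N_G\}$. Closure of $N_G$ under the $G$-action (axiom (4)) gives $\alpha\in \alpha\hplus 0$, since $-1+1=\eta+1\in N_G$ implies $-\alpha+\alpha\in N_G$. For uniqueness, suppose $-b+\alpha\in N_G$ for some $b\neq\alpha$. If $\alpha\in G$, multiplying by $\alpha^{-1}$ puts $1+\eta(b\alpha^{-1})\in N_G$ with $b\alpha^{-1}\neq 1$, contradicting the uniqueness clause (axiom (3)); the cases $\alpha=0$ or $b=0$ similarly contradict $1\notin N_G$ (axiom (2)) via $G$-action. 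Hence $\alpha\hplus 0=\{\alpha\}$, so $\bighplus_{j\in B}\alpha_j R_j(k)=\{\alpha_k\}$ and evaluating at $k$ forces $X(k)=\alpha_k$.

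This observation gives the biconditional immediately: the forward direction because any witnessing tuple $(\alpha_j)_{j\in B}$ must satisfy $\alpha_j=X(j)$, hence $X(e)\in\bighplus_{j\in B}X(j)R_j(e)$ for each $e$; and the reverse direction because setting $\alpha_j:=X(j)$ exhibits $X$ as a linear combination. For the second equality in the displayed formula, I would apply the identity property once more: when $j\notin\supp{X}$, the summand $X(j)R_j(e)$ equals $0$, so such terms may be deleted from the hypersum without changing its value.

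I expect no serious obstacle here. The only mildly nontrivial ingredient is the identity property $\alpha\hplus 0=\{\alpha\}$, which is built into hypergroups (Definition~\ref{defn:hypergroup}(1)) but must be checked from the tract axioms in the more general setting; the verification is a short use of axioms (2), (3), and (4).
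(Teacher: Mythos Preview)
Your proposal is correct. The paper itself gives no proof of this lemma beyond the remark ``The following is clear but is important enough to be a lemma,'' so your write-up simply fills in the routine verification the author left implicit. Your argument---evaluating at $k\in B$ to force $\alpha_k=X(k)$, using the tract axioms to check $\alpha\hplus 0=\{\alpha\}$, and then dropping zero summands for the second equality---is exactly the expected one, and there is no meaningfully different route to take here.
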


We now arrive at the axiomatization of $F$-vectors of $F$-matroids.
\begin{defn} [\bf Tract Vector Axiom, First Form] \label{def:vectorax} Let $F$ be a tract, $E$ a finite set, and $\WW\subseteq F^E$.  $\WW$ is an {\bf $F$-vector set} if $\WW$ is exactly the set of all $X\in F^E$ such that, for every support basis $B$ and every nearly reduced row echelon form $\{S_j:j\in B\}$ with respect to $B$, $X$ is a linear combination of $\{S_j:j\in B\}$.
 \end{defn}

We make this the definition for the sake of brevity, but it is easy to verify the following equivalent characterization:
\begin{prop}[\bf Tract Vector Axiom, Second Form] 
 Let $F$ be a tract, $E$ a finite set, and $\WW\subseteq F^E$.  $\WW$ is an $F$-vector set if 
and only if
\begin{enumerate}
\item $\WW$ has a reduced row-echelon form with respect to each support basis $B$, and
\item $\WW$ is exactly the set of  $X\in F^E$ such that $X$ is a  linear combination of each reduced row-echelon form for $\WW$.
\end{enumerate}
\end{prop}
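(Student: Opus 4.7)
The plan is to prove the equivalence by a direct unwinding of Definition~\ref{def:vectorax} against two elementary observations.

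\emph{Observation 1 (rescaling).} If $\{S_j:j\in B\}\subseteq\WW$ is a nearly reduced row-echelon form and each rescaling $R_j:=S_j(j)^{-1}S_j$ lies in $\WW$, then $\{R_j:j\in B\}$ is a reduced row-echelon form; conversely every reduced row-echelon form is itself nearly reduced. Since $\alpha_jS_j=(\alpha_jS_j(j))R_j$ componentwise and $N_G$ is closed under the $G$-action (tract distributivity), one has $X\in\bighplus_j\alpha_jS_j$ if and only if $X\in\bighplus_j(\alpha_jS_j(j))R_j$; hence the sets of linear combinations of $\{S_j\}$ and of $\{R_j\}$ coincide via the coefficient bijection $(\alpha_j)\leftrightarrow(\alpha_jS_j(j))$.

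\emph{Observation 2 (closure).} Any $\WW$ satisfying the first form is closed under multiplication by $F-\{0\}$: for $X\in\WW$ and $\alpha\in F-\{0\}$, given any support basis $B$ and nearly RREF $\{S_j\}$ for $B$, $X\in\bighplus_j\beta_jS_j$ for some $\beta_j$, whence $\alpha X\in\bighplus_j(\alpha\beta_j)S_j$ by the same distributivity; so $\alpha X$ is a linear combination of every nearly RREF and therefore lies in $\WW$ by the first form. The analogous calculation, reading ``RREF'' in place of ``nearly RREF'', shows that any $\WW$ satisfying (1) and (2) is also closed under multiplication by $F-\{0\}$.

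For the forward implication, assume $\WW$ satisfies the first form. By Observation~2 and Lemma~\ref{lem:minrref}(2), every support basis admits a reduced row-echelon form for $\WW$, giving (1). For (2): if $X\in\WW$, then $X$ is a linear combination of every nearly RREF, in particular of every RREF. Conversely, if $X$ is a linear combination of every RREF for every $B$, then for any nearly RREF $\{S_j\}$, its rescaling lies in $\WW$ by Observation~2 (hence is a RREF) and admits $X$ as a linear combination, so by Observation~1 $X$ is also a linear combination of $\{S_j\}$; thus $X\in\WW$ by the first form. The backward implication is symmetric: starting from (1) and (2) and the closure remark of Observation~2, the same use of Observation~1 gives $X\in\WW$ if and only if $X$ is a linear combination of every nearly RREF, which is the first form.

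The only mild obstacle is the potential non-uniqueness of reduced row-echelon forms with respect to a given support basis: Observation~1 resolves this by intertwining the linear-combination condition for a nearly RREF with that of its RREF rescaling, so the argument applies uniformly across any choice of RREF and no bookkeeping about which RREF is selected in (2) is required.
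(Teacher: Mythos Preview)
Your argument is correct and supplies exactly the verification the paper omits (the paper simply asserts that the equivalence is ``easy to verify'' and gives no proof). The two observations you isolate---that rescaling a nearly RREF by the pivot values gives a RREF with the same set of linear combinations, and that either form of the axiom forces closure under the $G$-action---are precisely what one needs, and your use of Lemma~\ref{lem:minrref}(2) to obtain (1) is the intended route.

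One small point worth tightening: your final paragraph says Observation~1 ``resolves'' the potential non-uniqueness of RREFs, but Observation~1 only relates a nearly RREF to \emph{its own} rescaling, not two distinct RREFs for the same $B$ to each other. What actually closes this gap in the backward direction is that (2) itself forces uniqueness: if $\{T_j\}$ is any RREF contained in $\WW$, then each $T_j\in\WW$ satisfies $T_j(k)=\delta_{jk}$, so by (2) and Lemma~\ref{lem:coeffs} one gets $T_j(e)\in\bighplus_{k\in B\cap\supp{T_j}}T_j(k)R_k(e)=\{R_j(e)\}$, i.e.\ $T_j=R_j$. (This is the same calculation the paper records as Proposition~\ref{prop:urref}, but there it is stated only after the First Form is assumed.) With that one line added, your argument is airtight regardless of how one reads the implicit quantifier over RREFs in (2).
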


\begin{prop} \label{prop:urref} When $\WW$ is an $F$-vector set and $B$ is a support basis then the reduced row-echelon form $\{R_j:j\in B\}$ is  unique. 
\end{prop}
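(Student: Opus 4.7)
The plan is to fix $j \in B$ and show $R_j = R'_j$ componentwise, where $\{R_j : j \in B\}$ and $\{R'_j : j \in B\}$ are two reduced row-echelon forms for $\WW$ with respect to $B$. Since $\{R'_k : k \in B\}$ is in particular a nearly reduced row-echelon form for $\WW$, the Tract Vector Axiom applied to $R_j \in \WW$ yields that $R_j$ is a linear combination of $\{R'_k : k \in B\}$. Because $\{R'_k\}$ satisfies $R'_k(l) = \delta_{kl}$, Lemma~\ref{lem:coeffs} applies and gives, for every $e \in E$,
$$R_j(e) \in \bighplus_{k \in B} R_j(k)\, R'_k(e).$$
All terms with $k \neq j$ vanish because $R_j(k) = \delta_{jk}$, so the hypersum reduces to $\{b \in F : -b + R'_j(e) \in N_G\}$.

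It therefore suffices to verify the tract-level fact that this set is exactly $\{R'_j(e)\}$. I would split into two cases: if $R'_j(e) = 0$, tract axiom (2) ($1 \notin N_G$) together with closure of $N_G$ under the $G$-action rules out any nonzero $b$; if $R'_j(e) \neq 0$, then $b = R'_j(e)$ itself works via axioms (3) and (4), and conversely any competing $b$ satisfies $-b\, R'_j(e)^{-1} + 1 \in N_G$, which by the uniqueness clause of axiom (3) forces $-b\, R'_j(e)^{-1} = \eta$, i.e., $b = R'_j(e)$. Once this singleton identity is in hand, $R_j(e) = R'_j(e)$ for every $e$, hence $R_j = R'_j$.

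The main subtlety is precisely this tract-level singleton identity. Over a hyperfield the identity $0 \hplus x = \{x\}$ is axiomatic and the argument is immediate, but the proposition is stated for a general tract, so the analogous statement must be extracted from the four tract axioms via the $G$-action trick sketched above. Nothing deeper is required: existence of a reduced row-echelon form is not at stake here (only uniqueness is claimed), and the Tract Vector Axiom does essentially all the work once Lemma~\ref{lem:coeffs} has pinned the coefficients of a linear combination against a reduced row-echelon form to the unique value $X(j)$.
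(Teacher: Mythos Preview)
Your proof is correct and follows essentially the same approach as the paper's: apply the Tract Vector Axiom to express one reduced row-echelon form as a linear combination of the other, use Lemma~\ref{lem:coeffs} to pin down the coefficients, and observe that the resulting hypersum collapses to a singleton. The paper does this in two lines, simply asserting $\bighplus_{k\in B\cap\supp{S_j}} S_j(k)R_k(e)=\{R_j(e)\}$ without comment; your careful derivation of this singleton identity from the tract axioms fills in exactly the step the paper leaves implicit.
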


\begin{proof} If $S_j\in\WW$ satisfies $S_j(k)=\delta_{jk}$ for all $k\in B$, then by Lemma~\ref{lem:coeffs}, for every $e$,
$S_j(e)\in\bighplus_{k\in B\cap\supp{S_j}} S_j(k)R_k(e)=\{R_j(e)\}$. Thus $S_j=R_j$.
\end{proof}

For any subset $\mathcal S$ of $F^E$ for which reduced-row-echelon forms are unique, we will use  $\{R_j:j\in B\}$ to denote the reduced row-echelon form for $\mathcal S$ with respect to a basis $B$. 

\begin{example} Corollary~\ref{cor:OM} will show that, in the case $F=\S$, Definition~\ref{def:vectorax} coincides with the standard definition of signed vectors of an oriented matroid (cf.\ 3.7.5 in~\cite{BLSWZ}). That is, $\VV\subseteq \mathcal S^E$ is an $\S$-vector set if and only if $\0\in\VV$ and $\VV$ satisfies {\em Symmetry, Composition,} and {\em Elimination}.

For general $F$, if $\WW\subseteq F^E$ satisfies the tract vector axioms then $\0\in\WW$ and $\WW$ satisfies an analog to the Symmetry Axiom for oriented matroids (Lemma~\ref{lem:symmetry}). In general $\WW$ need not satisfy the Elimination or Composition Axioms (see Sections~\ref{sec:elim} and~\ref{sec:flatcompos}). 
\end{example}

We now begin to relate Definition~\ref{def:vectorax} to $F$-matroids.

\begin{lemma}\label{lem:suppbasesarebases} Let $\WW\subseteq F^E$ satisfy the Tract Vector Axiom, and let $\BB$ be the set of support bases of $\WW$. Then $\BB$ is the set of bases of a matroid. A set $\{e_1, \ldots, e_k\}\subseteq E$ is independent in this matroid if and only if for each $j\in[k]$ there is a $Y_j\in\WW$ such that $\supp{Y_j}\cap\{e_1,\ldots, e_k\}=\{e_j\}$.
\end{lemma}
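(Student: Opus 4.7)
The plan is to identify $\mathcal{I} := \{I \subseteq E : \forall e \in I,\ \exists Y_e \in \WW \text{ with } \supp{Y_e} \cap I = \{e\}\}$ as the independent set family of a matroid whose basis set is precisely $\BB$. The family $\mathcal{I}$ is visibly downward-closed, since restricting a witnessing family $\{Y_j : j \in I\}$ to any $J \subseteq I$ still witnesses $J \in \mathcal{I}$. I would first check the inclusion $\BB \subseteq \mathcal{I}$: by Proposition~\ref{prop:urref} each $B \in \BB$ carries a unique reduced row-echelon form $\{R_j : j \in B\} \subseteq \WW$ with $\supp{R_j} \cap B = \{j\}$, and these $R_j$ serve as witnesses. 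Support bases are moreover maximal in $\mathcal{I}$: if $B \cup \{e\} \in \mathcal{I}$ with $e \notin B$, the witness $Y_e \in \WW - \{\0\}$ would satisfy $\supp{Y_e} \cap B = \emptyset$, contradicting the meeting property of $B$.

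The substantive step is the converse: every maximal element $I$ of $\mathcal{I}$ is a support basis. Once $I$ is known to be a meeting set, minimality is automatic, since each witness $Y_j \in \WW - \{\0\}$ has $\supp{Y_j} \cap (I - \{j\}) = \emptyset$ and so rules out $I - \{j\}$ as a meeting set. The core issue is therefore to show that $I$ meets every support: if some $X \in \WW - \{\0\}$ had $\supp{X} \cap I = \emptyset$, the plan is to produce $e \in \supp{X}$ together with a witness family for $I \cup \{e\}$, contradicting maximality. For the new element $e$, $X$ itself is a witness; the difficulty is in finding witnesses for the old $j \in I$ that also vanish at the chosen $e$. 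I would fix a support basis $B$, expand both $X$ and each $Y_j$ in terms of the rref $\{R_k : k \in B\}$ via Lemma~\ref{lem:coeffs}, and use the resulting coordinate relations either to select $e$ so that $Y_j(e) = 0$ for all $j \in I$, or to replace a problematic $Y_j$ by an alternative element of $\WW$. This extension step is the main obstacle, since $\WW$ is not in general closed under hypersum and the field-theoretic replacement $Y_j - (Y_j(e)/X(e))X$ is unavailable over a general tract.

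To finish the matroid property of $\BB$, I would complement the above by establishing basis exchange. Given $B, B' \in \BB$ and $b \in B - B'$, the plan is to choose $b' \in B' \cap \supp{R_b}$ (nonempty because $B'$ is a meeting set and $R_b \neq \0$) and verify that $\hat B := (B - \{b\}) \cup \{b'\}$ belongs to $\BB$. That $\hat B$ is a meeting set follows at once from Lemma~\ref{lem:coeffs}: any putative $X \in \WW - \{\0\}$ with $\supp{X} \cap \hat B = \emptyset$ would satisfy $\supp{X} \cap B = \{b\}$, hence $X = X(b) R_b$, contradicting $b' \notin \supp{X}$. Minimality of $\hat B$ is more delicate --- for each $c \in B - \{b\}$ one must produce an element of $\WW$ witnessing that $\hat B - \{c\}$ fails the meeting property, which may require a refined choice of $b'$ or the construction of a suitable hypersum of $R_b$ and $R_c$ lying in $\WW$. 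Combining these three steps, $\mathcal{I}$ becomes the independence family of the matroid with basis set $\BB$, and the characterization in the statement is immediate.
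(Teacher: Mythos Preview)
Your framework is sound, and you correctly isolate the two hard spots. But both gaps you flag are genuine, and the paper's proof circumvents them by organizing the argument differently.

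\textbf{The extension step is the real problem.} You want to show that a maximal $I\in\mathcal I$ meets every support; your plan is, given $X\in\WW-\{\0\}$ with $\supp X\cap I=\emptyset$, to pick some $e\in\supp X$ and manufacture new witnesses for $I\cup\{e\}$. As you note, over a general tract there is no way to ``subtract off'' a multiple of $X$ from $Y_j$ to kill $Y_j(e)$, and nothing in the Vector Axiom promises such an element of $\WW$. This route does not close.

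The paper avoids this entirely. It first proves an exchange statement in the \emph{bring-in} direction: given a support basis $B$ and $e\notin B$ with $Y(e)\neq 0$ for some $Y\in\WW$, expand $Y$ in the rref $\{R_j:j\in B\}$ to find $f\in B$ with $R_f(e)\neq 0$; then any $Z\in\WW-\{\0\}$ avoiding $B-\{f\}$ is a scalar multiple of $R_f$ and hence hits $e$, so $(B-\{f\})\cup\{e\}$ is again a support basis. (Your exchange argument is essentially this same computation, just parametrized by which element is fixed first; the meeting-set verification is identical.)

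The payoff comes in the independence characterization, which the paper handles by \emph{induction on $k$} rather than by analyzing maximal elements of $\mathcal I$. Assuming $\{e_1,\dots,e_{k-1}\}$ already lies in some support basis $B$, look at the witness $Y_k$: since $Y_k(e_l)=0$ for $l<k$, its rref expansion over $B$ uses only $R_j$ with $j\in B-\{e_1,\dots,e_{k-1}\}$, so some such $R_j$ has $R_j(e_k)\neq 0$. Now the bring-in exchange swaps $j$ for $e_k$, producing a support basis containing $\{e_1,\dots,e_k\}$. No new witnesses are ever constructed; the existing $Y_k$ is used only to locate the correct $j$ to remove. This is precisely the maneuver your extension argument lacks.

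So: reorder your proof. Prove exchange first in the form ``given $B$ and an incoming $e$, the rref picks the outgoing $f$''; then replace the maximal-$\mathcal I$ argument with the induction above. Your observations that $\mathcal I$ is downward closed and that $\BB\subseteq\mathcal I$ with support bases maximal in $\mathcal I$ then finish the identification of $\mathcal I$ with the independent sets.
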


\begin{proof} We first prove that if $B$ is a support basis, $e\in E-B$, and $Y(e)\neq 0$ for some $Y\in\WW$ then there is an $f\in B$ such that $(B-\{f\})\cup\{e\}$ is a support basis. Since $Y$ is a  linear combination
of  $\{R_j: j\in B\}$ and  $Y(e)\neq 0$, there is some $f\in B$ such that $R_f(e)\neq 0$. Fix such an $f$.

If $Z$ is an element of $\WW$ and $\supp{Z}\cap (B-\{f\})=\emptyset$, then since $Z$ is a linear combination of $\{R_j: j\in B\}$ we have that $Z$ is a multiple of $R_f$. Thus $e\in\supp{Z}$. Thus $(B-\{f\})\cup\{e\}$ contains a support basis $B'$. We will see that $B'=(B-\{f\})\cup\{e\}$. To see this, let $\{R_j':j\in B'\}$ be the reduced row-echelon form with respect to $B'$. Since $R_f\cap(B'-\{e\})=\emptyset$, by the same reasoning as before we see that $R_e'$ is a multiple of $R_f$. Also by this reasoning, we see that if $Z$ is an element of $\WW$ and $\supp{Z}\cap (B'-\{e\})=\emptyset$, then $F$ is a multiple of $R'_e$, and thus is a multiple of $R_f$. Thus $(B'-\{e\}\cup\{f\}$ contains a support basis. This support basis is contained in the support basis $B$, and so it must equal $B$. Thus $B'=(B-\{f\})\cup\{e\}$.

Thus the support bases of $\WW$ satisfy the Basis Exchange Axiom for matroids: if $B$ is a support basis and  $e\in E-B$ is in a basis, then certainly there is a $Y\in\WW$ with $Y(e)\neq 0$, and so there is an $f\in B$ such that $(B-\{f\})\cup\{e\}$ is a support basis.

To see the second statement, first note that if $\{e_1,\ldots, e_k\}$ is independent, then it is contained in a basis, which implies the existence of the $Y_j$. To see the converse, we induct on $k$. Since $Y_j(e_j)\neq 0$, by the above argument each $e_j$ is in a basis. If $B$ is a basis containing $\{e_1, \ldots, e_{k-1}\}$, then consider the reduced row-echelon form $\{R_j:j\in B\}$. 
Then $Y_k=\bighplus_{j\in B} \alpha_j R_j$, for some values $\alpha_j$, but since $Y_k(e_l)=0$ for all $l<k$, we have $\alpha_{e_l}=0$ for all $l<k$. Thus there is some $j\in B-\{e_1, \ldots, e_{k-1}\}$ such that $R_j(e_k)\neq 0$, and the above argument shows that $(B-\{j\})\cup\{e_k\}$ is a basis.
\end{proof}

The following is obvious, but useful enough to state as a lemma.
\begin{lemma} \label{lem:orbitreps} Let $\CC\subset F^E$, and let $S\subseteq \CC$ contain at least one representative from each $G$ orbit in $\CC$. Then $\CC^\perp=S^\perp$.
\end{lemma}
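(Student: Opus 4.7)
The plan is to prove the two inclusions separately, with the forward inclusion being trivial and the reverse requiring only the compatibility of conjugation with the group structure and axiom~(4) of Definition~\ref{def-tract}.

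First, since $S\subseteq \CC$, any $X\in F^E$ that is orthogonal to every element of $\CC$ is certainly orthogonal to every element of $S$, so $\CC^\perp\subseteq S^\perp$ is immediate.

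For the reverse inclusion, fix $X\in S^\perp$ and an arbitrary $Y\in \CC$. By hypothesis $S$ contains a representative of the $G$-orbit of $Y$, so there exists $g\in G$ with $gY\in S$; thus $X\perp gY$, meaning $X\htimes (gY)\in N_G$. The key computation is to pull the scalar $g$ out of the inner product. Using that $c$ is a multiplicative automorphism of $G$ and that the $F$-action on $F^E$ is componentwise,
\[
X\htimes (gY)=\sum_{e\in E} X(e)\htimes (gY(e))^c=\sum_{e\in E} X(e)\htimes g^c\, Y(e)^c=g^c\bigl(X\htimes Y\bigr)\in N_G.
\]
Since $g\in G$ and $c$ is an automorphism of the group $G$, we have $g^c\in G$, and so $(g^c)^{-1}\in G$. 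Applying axiom~(4) of Definition~\ref{def-tract} (closure of $N_G$ under the $G$-action),
\[
X\htimes Y=(g^c)^{-1}\cdot g^c (X\htimes Y)\in N_G,
\]
so $X\perp Y$. As $Y\in \CC$ was arbitrary, $X\in \CC^\perp$, giving $S^\perp\subseteq \CC^\perp$.

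There is really no obstacle here; the only subtlety worth flagging is that one must invoke the fact that conjugation preserves the distinguished subgroup $G\subseteq F$ (so that $g^c$ is itself a legal scalar for the $G$-action on $N_G$), which is built into the convention that tracts are equipped with a conjugation automorphism.
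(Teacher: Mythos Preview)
Your proof is correct; the paper does not actually supply a proof here, merely declaring the lemma ``obvious,'' so your argument is precisely the routine verification the author had in mind. The only point one might streamline is that the computation $X\htimes(gY)=g^c(X\htimes Y)$ already shows both directions at once (since $g^c\in G$ is invertible and $N_G$ is $G$-stable), but what you wrote is fine.
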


In particular (see Lemma~\ref{lem:fundcirc}),   for an $F$-matroid $\MM$,
to show that an $X\in F^E$ is in $\CC(\MM)^\perp$, it is enough to show that $X\perp FC(e,B)$ for every $B$ and $e$.

\begin{defn} Let $\MM$ be a (strong or weak) $F$-matroid and $X\in F^E$. We say $X$ is {\bf consistent} with $\CC^*(\MM)$ if, for each basis $B$, $X$ is a linear combination of the reduced row echelon form $\{R_j:j\in B\}$ for $C^*(\MM)$ with respect to $B$.
\end{defn}

\begin{lemma} \label{lem:cstarcperp} Let $\MM$ be a (strong or weak) $F$-matroid and $X\in F^E$. Then $X\in\CC(\MM)^\perp$ if and only if $X$ is consistent with $\CC^*(\MM)$.
\end{lemma}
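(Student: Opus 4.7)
The plan is to reduce both directions to a single identity relating fundamental circuits and fundamental cocircuits: for every basis $B$, every $j\in B$, and every $e\in E-B$,
\[
FC^*(j,B)(e) = -FC(e,B)(j)^c.
\]
To establish this, note that $FC(e,B)$ is supported in $B\cup\{e\}$ with value $1$ at $e$, while $FC^*(j,B)$ is supported in $(E-B)\cup\{j\}$ with value $1$ at $j$; these supports meet only in $\{j,e\}$. Therefore the inner product $FC(e,B)\htimes FC^*(j,B)$ collapses to the two-term sum $FC^*(j,B)(e)^c + FC(e,B)(j)$, which lies in $N_G$ by the orthogonality of $\CC(\MM)$ and $\CC^*(\MM)$ (valid in the weak as well as the strong case). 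Because the defining property of $-x$ in a tract (combined with axiom (4) and the uniqueness of $\eta$) forces $-x$ to be the \emph{only} element $y$ with $x+y\in N_G$, this yields $FC(e,B)(j) = -FC^*(j,B)(e)^c$, and applying the involution $c$ (which fixes $\eta$, hence commutes with negation) gives the claimed identity.

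With this identity in hand, the forward direction is straightforward. Suppose $X$ is consistent with $\CC^*(\MM)$. By Lemma~\ref{lem:fundcirc}(3) every $F$-circuit is a scalar multiple of some $FC(e,B)$, so by Lemma~\ref{lem:orbitreps} it suffices to verify $X\perp FC(e,B)$ for every basis $B$ and every $e\in E-B$. Consistency, combined with Lemma~\ref{lem:coeffs}, gives $-X(e) + \sum_{j\in B} X(j)R_j(e) \in N_G$, where $R_j = FC^*(j,B)$. Substituting $R_j(e) = -FC(e,B)(j)^c$ and then multiplying by $-1$ (a valid operation on $N_G$ by axiom (4)), one obtains $X(e) + \sum_{j\in B} X(j)FC(e,B)(j)^c \in N_G$, which is exactly the statement that $X\htimes FC(e,B)\in N_G$.

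For the converse, assume $X\in\CC(\MM)^\perp$ and fix a basis $B$. By Lemma~\ref{lem:coeffs}, consistency requires $X(f) \in \bighplus_{j\in B} X(j)R_j(f)$ for every $f\in E$. When $f\in B$ this is automatic from $R_j(f) = \delta_{jf}$. For $f=e\in E-B$, the assumption $X\perp FC(e,B)$ gives $X(e) + \sum_{j\in B} X(j) FC(e,B)(j)^c \in N_G$; substituting $FC(e,B)(j)^c = -R_j(e)$ and multiplying by $-1$ yields $-X(e) + \sum_{j\in B} X(j)R_j(e) \in N_G$, which is exactly the required membership.

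The main obstacle is the negative-transpose identity between fundamental circuits and cocircuits, which over a field is trivial but over a general tract depends crucially on the uniqueness of the element $\eta$ in Definition~\ref{def-tract}(3); once this is in place the rest of the argument is bookkeeping around Lemma~\ref{lem:coeffs} and two uses of multiplication by $-1$ on elements of $N_G$.
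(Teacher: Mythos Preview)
Your proof is correct and follows essentially the same approach as the paper's. Both arguments reduce to the identity $FC^*(j,B)(e) = -FC(e,B)(j)^c$ (derived from $|\supp{FC(e,B)}\cap\supp{FC^*(j,B)}|\leq 2$ together with orthogonality and the uniqueness of $\eta$), then use Lemma~\ref{lem:orbitreps} and Lemma~\ref{lem:coeffs} to translate between $X\perp FC(e,B)$ and $X(e)\in\bighplus_{j\in B}X(j)R_j(e)$; the only organizational difference is that you state this identity up front and then handle the two directions separately, whereas the paper derives the identity inline and proves the biconditional in one pass.
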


\begin{proof}  We will show, for every basis $B$ and every $e\in E-B$, that $X\perp FC(e,B)$ if and only if $X(e)\in \bighplus_{j\in B\cap\supp{X}}X(j)R_j(e)$. It then follows from Lemma~\ref{lem:orbitreps} that $X\in\CC(\MM)^\perp$ if and only if, for every basis $B$, $X\in \bighplus_{j\in B\cap\supp{X}}X(j)R_j$, and this is equivalent to $X$ being a linear combination of $\{R_j:j\in B\}$ by Lemma~\ref{lem:coeffs}.

Let $X\in F^E$ and $Y=FC(e,B)\in\CC(\MM)$ for some basis $B$ and some $e$. Consider $\{R_j: j\in B\}\subseteq \CC^*(\MM)$, the reduced row-echelon form with respect to $B$. Since $R_j\perp Y$ and $\supp{R_j}\cap\supp{Y}\subseteq \{j,e\}$, we have that if $j\in\supp{Y}$ then 
$$R_j(e)Y(e)^c+ R_j(j)Y(j)^c=R_j(e)Y(e)^c+ Y(j)^c\in N_G$$ and so  $Y(j)^c=-R_j(e)Y(e)^c$.
Further, if $j\in B-\supp{Y}$ then since $Y(e)\neq 0$ we have that $R_j(e)=0$.

Thus 
\begin{align*}
X\htimes Y&=\sum_{j\in\supp{X}\cap\supp{Y}} X(j)Y(j)^{c}\\[12pt]
&=X(e)Y(e)^{c}+\sum_{j\in B\cap\supp{X}\cap\supp{Y}}-X(j)R_j(e)Y(e)^{c}\\[12pt]
&=(X(e)+\sum_{j\in B\cap\supp{X}\cap\supp{Y}}-X(j)R_j(e))Y(e)^{c}
\end{align*}
and so $X\perp Y$ if and only if $X(e)+\sum_{j\in B\cap\supp{X}\cap\supp{Y}}-X(j)R_j(e)\in N_G$. This is true if and only if $X(e)\in \bighplus_{j\in B\cap\supp{X}\cap\supp{Y}}X(j)R_j(e)$. Since $R_j(e)=0$ for every $j\in B-\supp{Y}$, we have that $X\perp Y$ if and only if $X(e)\in \bighplus_{j\in B\cap\supp{X}}X(j)R_j(e)$. 

\end{proof}

\begin{defn} Let $\MM$ be a strong $F$-matroid. 

The set $\VV^*(\MM)$ of {\bf $F$-covectors} of $\MM$ is $\CC(\MM)^\perp$. Equivalently, $\VV^*(\MM)$ is the set of all elements of $F^E$ that are consistent with $\CC^*(\MM)$.

The set $\VV(\MM)$ of {\bf $F$-vectors} of $\MM$ is $(\CC^*(\MM))^\perp$. Equivalently, $\VV(\MM)$ is the set of all elements of $F^E$ that are consistent with $\CC(\MM)$.
\end{defn}

\begin{thm}\label{thm:cryptom} If $\MM$ is a strong $F$-matroid then $\VV(\MM)$ and $\VV^*(\MM)$ are $F$-vector sets (in the sense of Definition~\ref{def:vectorax}). 

Further, every $F$-vector set is $\VV^*(\MM)$ for some strong $F$-matroid $\MM$, and 
$$\CC^*(\MM)=\minsupp(\VV^*(\MM)-\{\0\})$$  
$$\CC(\MM)=\minsupp((\VV^*(\MM))^\perp-\{\0\}).$$ 
\end{thm}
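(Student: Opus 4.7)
The plan is to prove four facts in order: (i) that $\minsupp(\VV^*(\MM)-\{\0\})=\CC^*(\MM)$ when $\MM$ is a strong $F$-matroid, (ii) that $\VV^*(\MM)$ then satisfies the Tract Vector Axiom (and $\VV(\MM)$ follows by duality), (iii) the equality $\CC(\MM)=\minsupp(\VV^*(\MM)^\perp-\{\0\})$, and (iv) the converse, that every $F$-vector set arises as $\VV^*(\MM)$ for a strong $F$-matroid.

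For (i), the containment $\CC^*(\MM)\subseteq\minsupp(\VV^*(\MM)-\{\0\})$ is immediate, since $F$-cocircuits lie in $\CC(\MM)^\perp=\VV^*(\MM)$ and have supports minimal in $\CC^*(\MM)$ by Incomparability. Conversely, let $X\in\minsupp(\VV^*(\MM)-\{\0\})$. By Lemma~\ref{lem:cstarcperp} and Lemma~\ref{lem:coeffs}, for every basis $B$ the element $X$ is a linear combination of the RREF $\{R_j:j\in B\}$ with coefficients $X(j)$; since $X\neq\0$ this forces $B\cap\supp{X}\neq\emptyset$ for every basis, so $\supp{X}$ contains a cocircuit $D$ of the underlying matroid. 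Proposition~\ref{prop:morph} and Corollary~\ref{cor:underlying} produce an $F$-cocircuit with support $D$, which lies in $\VV^*(\MM)$, so minimality forces $\supp{X}=D$. Choosing a basis $B$ with $B\cap D=\{e\}$ (possible because $E-D$ is a hyperplane) and applying Lemma~\ref{lem:coeffs} yields $X=X(e)R_e=X(e)FC^*(e,B)\in\CC^*(\MM)$.

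For (ii), combining (i) with Lemma~\ref{lem:supportbasis1} identifies the support bases of $\VV^*(\MM)$ with the bases of $\MM$, and Lemma~\ref{lem:circrref} gives a unique RREF from $\CC^*(\MM)$ at each basis, which automatically serves as an RREF for $\VV^*(\MM)$. Any NRREF $\{S_j:j\in B\}$ of $\VV^*(\MM)$ satisfies $\supp{S_j}\cap B=\{j\}$, so by the argument of (i) each $S_j=S_j(j)R_j$; hence linear combinations of $\{S_j\}$ and of $\{R_j\}$ coincide, and Lemma~\ref{lem:cstarcperp} identifies these with membership in $\VV^*(\MM)$. The claim about $\VV(\MM)$ follows by applying the result to $\MM^*$ and using $\VV(\MM)=\VV^*(\MM^*)$. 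For (iii), observe that $\CC(\MM)\subseteq\VV^*(\MM)^\perp\subseteq\CC^*(\MM)^\perp=\VV(\MM)$ (the first inclusion because $\CC(\MM)\perp\VV^*(\MM)$ by definition, the second because $\CC^*(\MM)\subseteq\VV^*(\MM)$). Combined with the dual of (i), which gives $\minsupp(\VV(\MM)-\{\0\})=\CC(\MM)$, a squeeze argument using minimality of $\CC(\MM)$-supports in $\VV(\MM)$ forces $\minsupp(\VV^*(\MM)^\perp-\{\0\})=\CC(\MM)$.

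For (iv), given an $F$-vector set $\WW$, let $M$ be the matroid whose bases are the support bases of $\WW$ (Lemma~\ref{lem:suppbasesarebases}), and set $\CC^*:=\minsupp(\WW-\{\0\})$. Nontriviality is immediate; Symmetry follows from closure of $\WW$ under scalar multiplication (rescale the coefficients in any linear combination, using distributivity in the tract). Identifying supports of minsupp elements with cocircuits of $M$ and then invoking uniqueness of RREF coefficients (Lemma~\ref{lem:coeffs}) yields Incomparability. The main obstacle is Strong Modular Elimination: given a modular family $\{X_1,\ldots,X_k,X\}\subseteq\CC^*$ with witnesses $e_j$ satisfying $X(e_j)=-X_j(e_j)$, the strategy is to find a basis $B$ adapted to the modular family so that each $X_j$ becomes a scalar multiple of $FC^*(e_j,B)=R_{e_j}$, expand $X$ via the RREF at $B$, and then assemble $Z$ explicitly as a linear combination of $\{R_j:j\in B\}$ that has support avoiding the $e_j$'s and lies in $X\hplus\bighplus_j X_j$; membership of $Z$ in $\WW$ is then forced by the Tract Vector Axiom itself. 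Once $\MM$ is built, $\WW$ and $\VV^*(\MM)$ share the same support bases and RREFs, so the Tract Vector Axiom combined with Lemma~\ref{lem:cstarcperp} gives $\VV^*(\MM)=\WW$, and both displayed equalities then follow from (i) and (iii) applied to $\MM$.
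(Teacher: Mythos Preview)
Your overall architecture matches the paper's, and parts (ii) and (iii) are fine (for (iii) your squeeze $\CC(\MM)\subseteq\VV^*(\MM)^\perp-\{\0\}\subseteq\VV(\MM)-\{\0\}$ together with the dual of (i) is a clean alternative to the paper's direct computation with fundamental cocircuits). One cosmetic point: your (i) is unnecessary work. In the paper (following Baker--Bowler) $\CC^*(\MM)$ is \emph{defined} to be $\minsupp(\CC(\MM)^\perp-\{\0\})$, and since $\VV^*(\MM)=\CC(\MM)^\perp$ the equality $\CC^*(\MM)=\minsupp(\VV^*(\MM)-\{\0\})$ is literally the definition.

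There is, however, a real gap in your sketch of Strong Modular Elimination in (iv). You say you will ``assemble $Z$ explicitly as a linear combination of $\{R_j:j\in B\}$'' and that ``membership of $Z$ in $\WW$ is then forced by the Tract Vector Axiom itself.'' Two problems. First, the Tract Vector Axiom does not say that linear combinations of an RREF lie in $\WW$; it says that every element of $\WW$ is a linear combination of \emph{every} RREF, which is the opposite direction. Over a general tract, an arbitrary element of $\bighplus_j\alpha_jR_j$ need not lie in $\WW$. Second, even if $Z\in\WW$, what you actually need is $Z\in\CC^*=\minsupp(\WW-\{\0\})$, i.e.\ minimal support, and you do not address this. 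The paper's key move is that once the basis $B\supseteq\{e_1,\ldots,e_k,f\}$ is chosen with $B\cap\supp{X}\subseteq\{e_1,\ldots,e_k,f\}$, the expansion of $X$ reads $X\in X(f)R_f\hplus\bighplus_j X(e_j)R_{e_j}$, and since $X_j=-X(e_j)R_{e_j}$ this rearranges to $X(f)R_f\in X\hplus\bighplus_j X_j$. Thus $Z=X(f)R_f$ is a \emph{scalar multiple of a single} $R_f\in\CC^*$, so $Z\in\CC^*$ by Symmetry, and it visibly vanishes on each $e_j$. Your phrase ``linear combination'' obscures precisely the point that makes the argument work.
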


The characterization of strong $F$-matroids by $F$-covectors solidifies the assertion from the introduction that ``the concept of matroids over a tract generalizes the concept of linear subspaces of a vector space". Previously this assertion rested on Grassmann-Pl\"ucker functions (\cite{BB17}), but the discussion at the beginning of this section justifies the following. 

\begin{prop} Let $K$ be a field and $V$ a linear subspace of $K^E$. Then there is a strong $K$-matroid $\MM$ such that
\begin{enumerate}
\item $V=\VV^*(\MM)$
\item $V^\perp=\VV(\MM)$, and
\item The projective coordinates of the Pl\"ucker embedding for $V$ constitute a Grassmann-Pl\"ucker function for $\MM$.
\end{enumerate}
Further, every $K$-matroid arises in this way.
\end{prop}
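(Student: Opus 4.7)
My plan is to use the BB17 characterization quoted in the excerpt, which says that $\CC^{*} := \minsupp(V - \{\0\})$ is the $K$-circuit set of a (strong, since $K$ is a field) $K$-matroid; let $\MM$ be the $K$-matroid with $\CC^{*}(\MM) = \CC^{*}$. By Lemma~\ref{lem:supportbasis1} the bases of $\MM$ are the support bases of $\CC^{*}$, and for any support basis $B$ the ordinary reduced row-echelon form $\{R_j : j \in B\}$ of $V$ is, by Lemma~\ref{lem:minrref}(3), the reduced row-echelon form for $\CC^{*}$ at $B$. A direct check (in the style of Proposition~\ref{prop:urref}) shows that each $R_j$ has minimal support in $V$: if $Y \in V$ has $\supp{Y} \subseteq \supp{R_j}$ then $\supp{Y} \cap B \subseteq \{j\}$, so the unique expansion $Y = \sum_{k \in B} Y(k) R_k$ forces $Y = Y(j) R_j$. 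Hence each $R_j$ lies in $\CC^{*}(\MM)$ and the ordinary $K$-span of $\CC^{*}(\MM)$ is $V$.

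For (1), because $K$ is a field every hypersum $\bighplus_j \alpha_j R_j$ is the singleton $\{\sum_j \alpha_j R_j\}$, so by Lemma~\ref{lem:coeffs} the assertion ``$X$ is a linear combination of $\{R_j : j \in B\}$'' reduces to ``$X = \sum_{j \in B} X(j) R_j$.'' This holds exactly when $X \in V$, so the Tract Vector Axiom (First Form) gives $\VV^{*}(\MM) = V$. For (2), since $\CC^{*}(\MM)$ linearly spans $V$, bilinearity of the (possibly conjugation-twisted) inner product yields $\VV(\MM) = \CC^{*}(\MM)^{\perp} = V^{\perp}$. Assertion (3) is then classical: the Plücker coordinates of $V$ are the $r \times r$ minors of any basis matrix for $V$, and the Plücker relations they satisfy are exactly the specialization of the Grassmann-Plücker axioms of BB17 to a field (hypersums being singletons); the $K$-matroid they determine agrees with $\MM$ because both have cocircuit set $\minsupp(V-\{\0\})$.

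Conversely, given any strong $K$-matroid $\MM$, set $V := \VV^{*}(\MM)$. Since hypersums over $K$ are singletons, the Tract Vector Axiom presents membership in $V$ as a system of ordinary $K$-linear equations (one per support basis and element of $E$), so $V$ is a $K$-linear subspace of $K^{E}$. Applying the forward direction to this $V$ recovers $\MM$, because Theorem~\ref{thm:cryptom} gives $\CC^{*}(\MM) = \minsupp(\VV^{*}(\MM) - \{\0\}) = \minsupp(V - \{\0\})$. The main obstacle I anticipate is the minimal-support verification in the first paragraph: showing that the $R_j$ genuinely lie in $\CC^{*}(\MM)$, so that the fundamental cocircuits of $\MM$ are available to generate $V$. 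This is the classical fact that the minimal-support nonzero vectors of a subspace over a field contain a vector-space basis, and it is the pivot that makes the whole identification $V \leftrightarrow \MM$ work over fields but fail for more general tracts.
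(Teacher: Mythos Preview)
Your proof is correct and follows the same approach as the paper, which does not actually give a detailed argument but simply says the proposition is justified by ``the discussion at the beginning of this section'' (the reduced-row-echelon-form motivation and Equation~(\ref{eqn:rref})). Your write-up spells out precisely those details; the one cosmetic slip is that where you invoke the Tract Vector Axiom to conclude $\VV^*(\MM)=V$, the relevant reference is the definition of $\VV^*(\MM)$ as the set of elements consistent with $\CC^*(\MM)$ (equivalently Lemma~\ref{lem:cstarcperp}), not Definition~\ref{def:vectorax} itself.
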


Thus when $K$ is a field the Grassmann-Pl\"ucker function $\phr_\MM$ of a $K$-matroid $\MM$ can be viewed as an element of $\PP(\bigwedge^r K^{E\choose r})$ of the form $[v_1\wedge\cdots\wedge v_r]$, where $\{v_1, \ldots, v_r\}$ is a basis for the vector space $\VV^*(\MM)$. The algebraic structure of the exterior algebra $\bigwedge^r K^{E\choose r}$ gives a lovely relationship between $\phr_\MM$ and the vector space $\VV^*(\MM)$:
$$X\in\VV^*(\MM)\Leftrightarrow X\wedge \phr_\MM=0.$$
This begs for generalization to other tracts, and such a generalization has been accomplished for tracts arising from an idempotent semifield by Jeffrey and Noah Giansiracusa~\cite{gian}. The tracts $\K$ and $\T\triangle$ are examples.  Their paper is not written in the language of matroids over tracts, but the characterization of $F$-matroids via Grassmann-Pl\"ucker functions in~\cite{BB17} allows one to characterize Proposition 4.2.1 in \cite{gian} as: for a tract $F$ arising from an idempotent semifield and a rank $r$  $F$-matroid $\MM$, there is an associated graded  ``tropical Grassmann algebra" $\bigwedge\MM$ such that $\phr_\MM\in\PP(\bigwedge^r\MM)$ and, for all $X\in F^E$,
$$X\in\CC(\MM)^\perp\Leftrightarrow X\wedge\phr_\MM=0.$$

\begin{remark} \label{rem:simpledef} The role of reduced row-echelon forms in  Definition~\ref{def:vectorax}  may seem like overkill. One might hope for a definition that says, for every maximal linearly independent subset $S$ of $\WW$,  every element of $\WW$  has a unique expression as a linear combination of $S$. Sadly, this is not equivalent to Definition~\ref{def:vectorax}, as will be illustrated by an example in Section~\ref{sec:lincomb}.
\end{remark}

\begin{remark} Baker and Bowler (\cite{BB17}, Section 3) define the vectors and covectors of an $F$-matroid $\MM$ to be $\CC^*(\MM)^\perp$ resp.\ $\CC(\MM)^\perp$, without giving an independent axiomatization of vectors and covectors.
\end{remark}

\section{Proof of Theorem~\ref{thm:cryptom} }
 We begin by proving that if $\WW$ is an $F$-vector set then $\minsupp(\WW-\{\0\})$ satisfies the $F$-circuit axioms.
 
 \begin{lemma}\label{lem:symmetry} [Symmetry for $F$-vector sets] If $\WW\subseteq F^E$ satisfies the tract vector axioms, $X\in\WW$, and $\alpha\in F$ then $\alpha X\in\WW$.
 \end{lemma}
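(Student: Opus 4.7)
My plan is to verify the defining condition for $\alpha X \in \WW$ by reducing everything to two elementary properties of tracts: $0 \in N_G$ (axiom (1) of Definition~\ref{def-tract}) and the closure of $N_G$ under the multiplicative action of $G$ (axiom (4)). By Definition~\ref{def:vectorax}, I must show that for every support basis $B$ of $\WW$ and every nearly reduced row-echelon form $\{S_j : j \in B\}$ of $\WW$ with respect to $B$, the element $\alpha X$ is a linear combination of $\{S_j : j \in B\}$.

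I would split on whether $\alpha = 0$. When $\alpha = 0$, the claim reduces to $\0 \in \WW$; for any support basis $B$ and any nearly reduced row-echelon form $\{S_j : j \in B\}$, taking all coefficients equal to $0$ reduces the question to whether $0 \in \bighplus_{j \in B} 0$. Unwinding the definition of hypersum in a tract, this is the assertion $-0 + 0 + \cdots + 0 = 0 \in N_G$, which is exactly axiom (1).

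When $\alpha \neq 0$, I would use $X \in \WW$ to select coefficients $\beta_j \in F$ with $X(e) \in \bighplus_{j \in B} \beta_j S_j(e)$ for every $e \in E$, and then propose $\{\alpha\beta_j\}_{j \in B}$ as coefficients witnessing $\alpha X$ as a linear combination of $\{S_j : j \in B\}$. The required termwise implication
\[
X(e) \in \bighplus_{j \in B} \beta_j S_j(e) \implies \alpha X(e) \in \bighplus_{j \in B} (\alpha\beta_j) S_j(e)
\]
is the statement that if $-b + \sum_j \beta_j S_j(e) \in N_G$ then $-\alpha b + \sum_j \alpha\beta_j S_j(e) \in N_G$, which is a direct application of axiom (4): $\alpha \in G$ acts on the element $-b + \sum_j \beta_j S_j(e) \in N_G$ to give an element of $N_G$.

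There is no genuine obstacle here; the proof is purely a matter of tracking definitions and invoking $G$-equivariance of $N_G$. The only subtleties worth flagging are (a) that the hypothesis allows $\alpha = 0$, so the argument requires a short detour to establish $\0 \in \WW$ separately, and (b) that in a general tract the distributive law familiar from the hyperfield axioms is encoded precisely by the closure of $N_G$ under the $G$-action, which is why axiom (4) plays the central role.
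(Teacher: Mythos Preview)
Your proof is correct and takes essentially the same approach as the paper: if $X \in \bighplus_{j\in B}\beta_j R_j$ then $\alpha X \in \bighplus_{j\in B}\alpha\beta_j R_j$. You are simply more explicit than the paper, spelling out the $\alpha=0$ case separately and naming the tract axioms (1) and (4) that make the scaling step go through.
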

 
 \begin{proof} If $B$ is a support basis and $X\in\bighplus_{j\in B} \beta_j R_j$ then  $\alpha X\in\bighplus_{j\in B} \alpha\beta_j R_j$.
 \end{proof}
 
\begin{lemma} \label{lem:existbasis} Let $\WW$ be an $F$-vector set.

1. If $X\in\minsupp(\WW-\{\0\})$ and $b\in\supp{X}$ then there is a support basis $B$ containing $b$ such that $X$ is a multiple of $R_b$.

2. If $\{X_1, \ldots, X_k\}\subseteq\minsupp(\WW-\{\0\})$ is a modular family and, for each $j\in[k]$, $e_j\in \supp{X_j}-\bigcup_{l\neq j}\supp{X_l}$ then there is a support basis $B\supseteq\{e_1, \ldots, e_k\}$ such that each $X_j$ is a multiple of $R_{e_j}$.

3. If $\{X_1, \ldots, X_k,X\}\subseteq\minsupp(\WW-\{\0\})$ is a modular family, for each $j\in[k]$, $e_j\in \supp{X_j}-\bigcup_{l\neq j}\supp{X_l}$, and $f\in\supp{X}-\bigcup_{j=1}^k\supp{X_j}$ then there is a support basis $B\supseteq\{e_1, \ldots, e_k,f\}$ such that each $X_j$ is a multiple of $R_{e_j}$ and $X$ is a linear combination of $\{X_1, \ldots, X_k, R_f\}$.
\end{lemma}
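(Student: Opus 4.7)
For part (1), I use an iterative exchange. Since $X(b) \neq 0$, the singleton $\{b\}$ is independent in the matroid $M$ of Lemma~\ref{lem:suppbasesarebases}, so some support basis $B$ contains $b$. If $B \cap \supp{X} = \{b\}$, then $X(b)^{-1}X$ matches the defining conditions of the reduced row-echelon element $R_b$, and Proposition~\ref{prop:urref} gives $X = X(b) R_b$. Otherwise, fix $i \in (B \cap \supp{X}) \setminus \{b\}$. The key claim is $\supp{R_i} \not\subseteq \supp{X}$: a strict containment would contradict $X \in \minsupp(\WW-\{\0\})$, and equality would force $\supp{X} \cap B = \supp{R_i} \cap B = \{i\}$, contradicting $b \in \supp{X} \cap B$. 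Picking $e \in \supp{R_i} \setminus \supp{X}$, the exchange construction from the proof of Lemma~\ref{lem:suppbasesarebases} makes $(B \setminus \{i\}) \cup \{e\}$ a support basis that still contains $b$ and has strictly smaller intersection with $\supp{X}$. Iterate until $B \cap \supp{X} = \{b\}$.

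For parts (2) and (3), I pass to $M$ and exploit matroid modularity. Part (1) itself implies that each $\supp{X_j}$ (and $\supp{X}$ in part (3)) is a cocircuit of $M$: it meets every basis by the support-basis condition, and by part (1), for every $b \in \supp{X_j}$ some basis meets $\supp{X_j}$ exactly in $\{b\}$, so no proper subset of $\supp{X_j}$ meets every basis. Hence the modular family hypothesis, read through the antitone bijection $S \mapsto E \setminus S$ between unions of supports and flats of $M$, translates to the rank identity $\rank(\bigcap_j H_j) = r - k$, where $H_j := E \setminus \supp{X_j}$ and $r := \rank(M)$. Since modular families of atoms are hereditary, the same identity $\rank(\bigcap_{j \in J} H_j) = r - |J|$ holds for every sub-family indexed by $J$.

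For part (2) I pick any basis $I$ of the rank-$(r-k)$ flat $\bigcap_j H_j$ and set $B := I \cup \{e_1, \ldots, e_k\}$. An induction on $j$ shows $\rank(I \cup \{e_1, \ldots, e_j\}) = r - k + j$: by heredity this set lies in the flat $\bigcap_{l > j} H_l$ of rank $r-k+j$, and the next element $e_{j+1} \in \bigcap_{l \neq j+1} H_l \setminus H_{j+1}$ escapes $H_{j+1}$, hence escapes $\bigcap_{l > j} H_l$. So $B$ is a basis; since $I \subseteq H_j$ and $e_l \notin \supp{X_j}$ for $l \neq j$, we get $B \cap \supp{X_j} = \{e_j\}$, and uniqueness of reduced row-echelon forms (Proposition~\ref{prop:urref}) gives $X_j = X_j(e_j) R_{e_j}$. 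Part (3) is entirely parallel: modularity of $\{X_1, \ldots, X_k, X\}$ now gives $\rank(H \cap \bigcap_j H_j) = r - k - 1$, where $H := E \setminus \supp{X}$; picking $I'$ a basis of this intersection and setting $B := I' \cup \{e_1, \ldots, e_k, f\}$, the same rank argument (now first adjoining $f \in \bigcap_j H_j \setminus H$, then each $e_j$) confirms $B$ is a basis with $B \cap \supp{X_j} = \{e_j\}$ and $B \cap \supp{X} \subseteq \{e_1, \ldots, e_k, f\}$. The latter containment forces the reduced row-echelon expansion of $X$ to involve only $R_{e_1}, \ldots, R_{e_k}, R_f$; substituting $R_{e_j} = X_j(e_j)^{-1} X_j$ expresses $X$ as a linear combination of $\{X_1, \ldots, X_k, R_f\}$.

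The main obstacle is the cocircuit identification bridging part (1) to (2) and (3), together with the translation from the lattice-theoretic modularity in $L$ to the matroid rank identity $\rank(\bigcap_j H_j) = r - k$; after these are in hand, the rank-increment induction and the conclusions via uniqueness of reduced row-echelon forms are routine.
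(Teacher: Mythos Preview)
Your proof is correct but takes a genuinely different route from the paper's. For part~(1) the paper observes in one line that no $Z\in\WW-\{\0\}$ can have $X^0\cup\{b\}\subseteq Z^0$ (else $\supp{Z}\subsetneq\supp{X}$), so $X^0\cup\{b\}$ already contains a support basis $B$; necessarily $b\in B$ and $B\cap\supp{X}=\{b\}$, whence $R_b=X(b)^{-1}X$ by Proposition~\ref{prop:urref}. Your iterative basis-exchange argument reaches the same conclusion but works harder. For parts~(2) and~(3) the paper likewise argues directly that $(\bigcap_j X_j^0)\cup\{e_1,\ldots,e_k\}$ (resp.\ together with $X^0$ and $f$) meets the support of every $Z\in\minsupp(\WW-\{\0\})$: if $\supp{Z}\subseteq\bigcup_j\supp{X_j}$, take $l$ minimal with $\supp{Z}\subseteq\bigcup_{j\leq l}\supp{X_j}$; modularity in the lattice of unions forces $\supp{Z}\cup\bigcup_{j<l}\supp{X_j}=\bigcup_{j\leq l}\supp{X_j}$, hence $e_l\in\supp{Z}$. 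This is a short self-contained lattice argument, never invoking the matroid $M$ from Lemma~\ref{lem:suppbasesarebases}. Your route---identifying the $\supp{X_j}$ as cocircuits of $M$ via part~(1), translating modularity into the rank identity $\rank(\bigcap_j H_j)=r-k$ via the antitone flats correspondence, and building $B$ by adjoining the $e_j$ to a basis of that flat---is a perfectly valid and perhaps more conceptual alternative, but it imports more classical matroid machinery (cocircuits, flats, the submodular rank bound ensuring heredity of modular families) that the paper's argument sidesteps entirely.
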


\begin{proof} 1. There is no $Z\in\WW$ such that $X^0\cup\{b\}\subseteq Z^0$, since otherwise $X$ does not have minimal support. Thus $X^0\cup\{b\}$ contains a support basis $B$ that has $b$ as an element, and uniqueness of reduced row echelon forms (Proposition~\ref{prop:urref}) implies $R_b=X(b)^{-1}X$.

2. We first claim that $(\bigcap_{j=1}^k X_j^0)\cup\{e_1, \ldots, e_k\}$ has nontrivial intersection with the support of each $Z\in\minsupp(\WW-\{\0\})$. If $\supp{Z}\cap\bigcap_{j=1}^k X_j^0=\emptyset$ then $\supp{Z}\subseteq \bigcup_{j=1}^k\supp{X_j}$. Let $l$ be minimal such that $\supp{Z}\subseteq \bigcup_{j=1}^l\supp{X_j}$. Then either $l=1$, in which case $\supp{Z}=\supp{X_1}$, and hence $e_1\in Z$, or 
$$\bigcup_{j=1}^{l-1}\supp{X_j}\subset \bigcup_{j=1}^{l-1}\supp{X_j}\cup\supp{Z}\subseteq\bigcup_{j=1}^{l}\supp{X_j}.$$ By modularity the second inclusion must in fact be an equality, and so $e_l\in\supp{Z}$.

Thus  $(\bigcap_{j=1}^k X_j^0)\cup\{e_1, \ldots, e_k\}$ contains a support basis $B$, and certainly $\{e_1, \ldots, e_k\}\subseteq B$. Uniqueness of reduced row echelon forms then implies that each $X_j$ is a multiple of $R_{e_j}$.

3. By the same argument as in (2) we see that $(X^0\cap \bigcap_{j=1}^k X_j^0)\cup\{e_1, \ldots, e_k,f\}$ contains a support basis $B$ so that $\{e_1, \ldots, e_k\}\subseteq B$. Then 
$$X\in\bighplus_{b\in B} X(b)R_b=\bighplus_{b\in B\cap\{e_1, \ldots, e_k, f\}} X(b)R_b.$$
By modularity $\supp{X}\not\subseteq\bigcup_{j=1}^k\supp{X_j}$, and so $X\not\in\bighplus_{b\in\{e_1,\ldots, e_k\}} X(b)R_b$. Thus $f\in B$ and we have our desired result.

\end{proof}

\begin{lemma}\label{lem:easycircaxs} If  $\WW$ is an $F$-vector set then $\minsupp(\WW-\{\0\})$ satisfies Symmetry and Incomparability.
\end{lemma}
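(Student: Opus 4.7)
The plan is to handle Symmetry and Incomparability separately, both by leveraging machinery already developed in the preceding lemmas.

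For Symmetry, I would observe that if $X\in\minsupp(\WW-\{\0\})$ and $\alpha\in F-\{0\}$, then Lemma~\ref{lem:symmetry} gives $\alpha X\in\WW$. Since $\alpha$ is invertible in the multiplicative group $G$, we have $\supp{\alpha X}=\supp{X}$, so $\alpha X$ still has minimal support among nonzero elements of $\WW$ (any strictly smaller support element of $\WW-\{\0\}$ would, upon multiplication by $\alpha^{-1}$, contradict minimality of $\supp{X}$). This is essentially a one-line argument.

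The main content is Incomparability. Given $X,Y\in\minsupp(\WW-\{\0\})$ with $\supp{X}\subseteq\supp{Y}$, I would first note that by minimality of support we must have $\supp{X}=\supp{Y}$. Fix any $b\in\supp{X}$. Applying Lemma~\ref{lem:existbasis}(1) to $X$, there is a support basis $B$ containing $b$ such that $X=X(b)R_b$, where $\{R_j:j\in B\}$ is the reduced row-echelon form of $\WW$ with respect to $B$. The key observation is that this same $B$ then also ``diagonalizes'' $Y$: since $\supp{R_b}=\supp{X}=\supp{Y}$ and $R_k(j)=\delta_{jk}$ for $j,k\in B$, it follows that $B\cap\supp{Y}=\{b\}$.

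Now $Y(b)^{-1}Y\in\WW$ by Lemma~\ref{lem:symmetry}, and its values on $B$ are precisely $\delta_{jb}$. The uniqueness of reduced row-echelon forms (Proposition~\ref{prop:urref}) therefore forces $Y(b)^{-1}Y=R_b=X(b)^{-1}X$, yielding $Y=(Y(b)X(b)^{-1})X$ as required. I do not anticipate any serious obstacles; both axioms fall out quickly once Lemma~\ref{lem:existbasis}(1), Lemma~\ref{lem:symmetry}, and the uniqueness proposition are in hand. The only subtlety worth flagging is recognizing that a single support basis produced by Lemma~\ref{lem:existbasis}(1) for $X$ simultaneously realizes $Y$ as a scalar multiple of the same $R_b$, which sidesteps any potential awkwardness arising from the fact that a one-term hypersum in a general tract need not be a singleton.
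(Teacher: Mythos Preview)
Your proposal is correct and follows essentially the same route as the paper: derive Symmetry from Lemma~\ref{lem:symmetry}, and for Incomparability pick $b\in\supp{X}=\supp{Y}$, use Lemma~\ref{lem:existbasis}(1) to produce a support basis $B\subseteq X^0\cup\{b\}=Y^0\cup\{b\}$, and then conclude that both $X$ and $Y$ are scalar multiples of $R_b$. The only cosmetic difference is that you invoke Proposition~\ref{prop:urref} explicitly for the ``$Y=Y(b)R_b$'' step, whereas the paper folds that into its citation of Lemma~\ref{lem:existbasis}.
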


\begin{proof} Symmetry follows from Lemma~\ref{lem:symmetry}.

To see Incomparability: let $X,Y\in\minsupp(\WW-\{\0\})$ such that $\supp{X}\subseteq\supp{Y}$, and let $b\in\supp{X}$. By definition $\supp{X}=\supp{Y}$. Consider a support basis $B$ contained in $X^0\cup\{b\}$. This exists by Lemma~\ref{lem:existbasis}, and also by Lemma~\ref{lem:existbasis} $X=X(b) R_b$ and $Y=Y(b)R_b$. Thus $X=X(b)Y(b)^{-1}Y$.

\end{proof}

\begin{lemma} \label{cor:modelim} If $\WW$ is an $F$-vector set then $\minsupp(\WW-\{\0\})$ satisfies the strong $F$-circuit axioms.
\end{lemma}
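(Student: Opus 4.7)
The plan is to verify the four strong $F$-circuit axioms for $\CC := \minsupp(\WW - \{\0\})$. Three are easy: Nontriviality is immediate, and Symmetry and Incomparability have already been supplied by Lemma~\ref{lem:easycircaxs}. The work lies entirely in establishing Strong Modular Elimination.

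Given a modular family $\{X_1, \ldots, X_k, X\} \subseteq \CC$ and distinguished coordinates $e_j \in (\supp{X} \cap \supp{X_j}) \setminus \bigcup_{l \neq j} \supp{X_l}$ with $X(e_j) = -X_j(e_j)$, my first step is to pick an element $f \in \supp{X} \setminus \bigcup_j \supp{X_j}$ (available by hypothesis) and invoke Lemma~\ref{lem:existbasis}(3) to produce a support basis $B \supseteq \{e_1, \ldots, e_k, f\}$ whose reduced row-echelon form $\{R_b : b \in B\}$ satisfies $X_j = X_j(e_j) R_{e_j}$ and expresses $X$ as a hypersum supported on $B \cap \supp{X} = \{e_1, \ldots, e_k, f\}$. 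The candidate circuit I propose is $Z := X(f) R_f$.

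Two things then need to be checked. First, that $Z$ lies in $X \hplus \bighplus_j X_j$: using $R_{e_j} = X_j(e_j)^{-1} X_j$ together with $X(e_j) = -X_j(e_j)$ rewrites each summand $X(e_j) R_{e_j}$ as $-X_j$, yielding $X \in X(f) R_f \hplus \bighplus_j (-X_j)$; transcribing this as a componentwise $N_G$-relation and multiplying through by $\eta$ (which preserves $N_G$ by tract axiom (4) and satisfies $\eta^2 = 1$) rearranges the relation into $Z \in X \hplus \bighplus_j X_j$. The vanishing $Z(e_j) = 0$ is automatic since $R_f(e_j) = 0$ for $e_j \in B - \{f\}$.

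The remaining step, which I expect to carry the genuine content of the proof, is showing $Z \in \minsupp(\WW - \{\0\})$, equivalently that $R_f$ has minimal support. My plan is to argue directly from the support basis property: any nonzero $Z' \in \WW$ with $\supp{Z'} \subseteq \supp{R_f} \subseteq (E - B) \cup \{f\}$ must contain $f$ in its support, else $\supp{Z'} \cap B = \emptyset$ would contradict $B$ being a support basis; then $\supp{Z'} \cap B = \{f\}$ together with Lemma~\ref{lem:coeffs} forces $Z' = Z'(f) R_f$, so $\supp{Z'} = \supp{R_f}$. Scaling by $X(f) \neq 0$ preserves this minimality, placing $Z$ in $\CC$ and closing the argument.
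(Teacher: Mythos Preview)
Your proof is correct and follows essentially the same route as the paper: invoke Lemma~\ref{lem:existbasis}(3) to obtain the basis $B$, set $Z = X(f)R_f$, and rearrange the hypersum. The only difference is that you spell out why $R_f$ (hence $Z$) lies in $\minsupp(\WW-\{\0\})$, whereas the paper leaves this implicit --- it follows at once from Lemma~\ref{lem:minrref}(3), since the reduced row-echelon form $\{R_j:j\in B\}$ for $\WW$ is automatically a subset of $\minsupp(\WW-\{\0\})$.
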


\begin{proof} Lemma~\ref{lem:easycircaxs} proved everything except Strong Modular Elimination. Let $\{X_1, \ldots, X_k, X\} \subseteq \minsupp(\WW-\{\0\})$ and $\{e_1, \ldots, e_k\}$ be as in the hypothesis of Strong Modular Elimination. Let $f\in \supp{X}\backslash\cup_{j=1}^k\supp{X_j}$. By Lemma~\ref{lem:existbasis}.3 we get a support basis $B\supseteq\{e_1, \ldots, e_k, f\}$ such that $$X\in X(f) R_f\hplus\bighplus_{j=1}^k X(e_j) R_{e_j}.$$ But $X_j=X_j(e_j)R_{e_j}=-X(e_j)R_{e_j}$, and so
\begin{align*}
X\in&X(f)R_f\hplus\bighplus_{j=1}^k -X_j\\
X(f)R_f\in&X\hplus\bighplus_{j=1}^k X_j
\end{align*}
and so $X(f)R_f$ is our desired elimination $Z$.

\end{proof}

\begin{lemma} \label{lem:rrefscoincide} If $\MM$ is an $F$-matroid then the reduced row-echelon forms for $\VV^*(\MM)$ are  exactly the reduced row echelon forms for $\CC^*(\MM)$.
\end{lemma}

\begin{proof} Recall (Lemma~\ref{lem:supportbasis1}.1) that the support bases for $\VV^*(\MM)$ are exactly the support bases of $\CC^*(\MM)$. Thus it suffices to show that every element of every reduced row echelon form for $\VV^*(\MM)$ is in $\CC^*(\MM)$. Let $\{R_j:j\in B\}$ be a reduced row-echelon form for $\CC^*(\MM)$ with respect to a basis $B$, and let $X$ be an element of some reduced row-echelon form for $\VV^*(\MM)$ with respect to $B$. Then $\supp{X}\cap B=\{j_0\}$ for some $j_0$, and $X\in\bighplus_{j\in B} X(j)R_j=X(j_0) R_{j_0}$. Since $X$ is in a reduced row-echelon form with respect to $B$, we have that $X(j_0)=1$, and so $X=R_{j_0}$.
\end{proof}

We now complete the proof of Theorem~\ref{thm:cryptom}.

\begin{proof} Let $\MM$ be a strong $F$-matroid.  Then $\VV^*(\MM)=\CC(\MM)^\perp$, and so $\minsupp(\VV^*(\MM)-\{\0\})=\minsupp(\CC(\MM)^\perp-\{\0\})$, which is $\CC^*(\MM)$ by definition of $\CC^*$.  
 $\VV^*(\MM)$ has a reduced row-echelon form with respect to every basis,  by Lemmas~\ref{lem:circrref} and~\ref{lem:rrefscoincide},  and these are exactly the reduced row-echelon forms for $\CC^*(\MM)$. Thus $\VV^*(\MM)$, by  definition, satisfies  the second form of the tract Vector Axiom. By duality, $\VV(\MM)=\VV^*(\MM^*)$ does as well.
 
 Lemma~\ref{cor:modelim} showed that if $\WW\subseteq F^E$ is an $F$-vector set then $\minsupp(\WW-\{\0\})=\CC^*(\MM)$ for some $\MM$.  Thus, by Lemma~\ref{lem:supportbasis1}, the set of support bases of $\WW$ is exactly the set of bases of $\MM$, and so the definition of $\WW$ tells us that $\WW$ is exactly the set of elements of $F^E$ that are consistent with $\CC^*(\MM)$.  Thus by definition $\WW=\VV^*(\MM)$.

Finally, to see that $\CC(\MM)=\minsupp(\WW^\perp-\{\0\})$: since $\WW=\CC(\MM)^\perp$, we have $\CC(\MM)\subseteq\WW^\perp-\{\0\}$. 

To see that $\CC(\MM)\subseteq\minsupp(\WW^\perp-\{\0\})$, let $X\in\CC(\MM)$ and $Y\in\WW^\perp-\{\0\}$ such that $\supp{Y}\subseteq\supp{X}$. Let $e\in\supp{Y}$ and $f\in\supp{X}-\{e\}$. Then Lemma~\ref{lem:fundcirc} says that $X$ is a multiple of $FC(f,B)$ for some basis $B$. This $B$ contains $e$, and so the reduced row-echelon form $\{R_j:j\in B\}\subseteq\CC^*(\MM)$ contains an element $R_e$. Note $\supp{R_e}\cap\supp{X}=\{e,f\}$. Thus 
 $e\in\supp{Y}\cap\supp{R_e}\subseteq\{e,f\}$, and so orthogonality of $Y$ and $R_e$ implies that $Y(e)=Y(e)R_e(e)^c=-Y(f)R_e(f)^c$. Likewise $X(e)=X(e)R_e(e)^c=-X(f)R_e(f)^c$, and so
$$-\frac{1}{R_e(f)^c}=\frac{X(f)}{X(e)}=\frac{Y(f)}{Y(e)}$$
$$Y(f)=\frac{Y(e)}{X(e)}X(f)$$
and so $Y=\frac{Y(e)}{X(e)}X$. In particular, $\supp{Y}=\supp{X}$.

To see that $\minsupp(\WW^\perp-\{\0\})\subseteq\CC(\MM)$, we recall that the circuits of the underlying matroid of $\MM$ are exactly the minimal subsets $S$ of $E$  such that, for each cocircuit $T$, $|S\cap T|\neq 1$. Also recall (Proposition~\ref{prop:morph}) that the circuits resp. cocircuits of the underlying matroid of $\MM$ are exactly the supports of the $F$-circuits resp. $F$-cocircuits of $\MM$. If $X\in \WW^\perp-\{\0\}$ then certainly $|\supp{X}\cap\supp{Y}|\neq 1$ for each $Y\in\CC^*(\MM)$. Thus $\supp{X}$ contains the support of some element of $\CC(\MM)$. In particular, if $X\in\minsupp(\WW^\perp-\{\0\})$ then $X\in\CC(\MM)$.
 \end{proof}

\section{Basic properties}

\subsection{Duality}\label{sec:duality}
 
 Our results so far show that $\VV^*(\MM)=\CC(\MM)^\perp\supseteq\VV(\MM)^\perp$. If $F$ is a field then the  inclusion $\VV^*(\MM)\supseteq\VV(\MM)^\perp$ is an equality: this is just ordinary orthogonality of vector spaces. This is also the case if $F=\S$ (the oriented matroid case) -- see Example~\ref{ex:OMcomp}.
 
{\em However, in general $\VV(\MM)^\perp$ might be a proper subset of $\VV^*(\MM)$.}  This will be shown by an example with $F=\P$ in Section~\ref{sec:dualitycounterex}.

\begin{defn} (\cite{BB17}, see also \cite{DWperfect} ) A tract $F$ is {\bf perfect} if $\VV(\MM)^\perp=\VV^*(\MM)$ for every $F$-matroid $\MM$.
\end{defn}

In~\cite{BB17} Baker and Bowler give various sufficient conditions for a tract to be perfect.

\subsection{Deletion and contraction}\label{sec:delcontr}

Deletion and contraction  of $F$-matroids are defined in a way consistent with matroid and oriented matroid definitions:

\begin{thm}[\cite{BB17}, 2.29] \label{bakerdelcontr} 
Let $\MM$ be an $F$-matroid on $E$, and let $A\subseteq E$. Then $\minsupp(\{X\backslash A: X\in\CC(\MM)\}-\{\0\})$ is the set of $F$-circuits of an $F$-matroid $\MM/A$, called the {\bf contraction} of $\MM$ with respect to $A$, and $\{X\backslash A: X\in\CC(\MM), \supp{X}\cap A=\emptyset\}$ is the set of $F$-circuits of an $F$-matroid $\MM\backslash A$, called the {\bf deletion} of $\MM$ with respect to $A$. Moreover, $(M/A)^*=M\backslash A$ and $(M\backslash A)^*=M/A$.
\end{thm}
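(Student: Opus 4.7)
The plan is to prove deletion directly by verifying the strong $F$-circuit axioms, then define contraction via duality and identify its circuit set.

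For deletion, write $\CC':=\{X\backslash A:X\in\CC(\MM),\ \supp{X}\cap A=\emptyset\}\subseteq F^{E\setminus A}$. Nontriviality, Symmetry, and Incomparability are immediate: the last because $\supp{X\backslash A}\subseteq\supp{Y\backslash A}$ (with both $X,Y$ vanishing on $A$) lifts to $\supp{X}\subseteq\supp{Y}$ in $E$, so Incomparability in $\CC(\MM)$ applies. For Strong Modular Elimination, I would take a modular family $\{X_1\backslash A,\ldots,X_k\backslash A,X\backslash A\}\subseteq\CC'$ and note that, since modularity depends only on supports, the lifts $\{X_1,\ldots,X_k,X\}$ form a modular family in $\CC(\MM)$. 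Applying SME in $\MM$ yields $Z\in\CC(\MM)$ with $Z\in X\hplus\bighplus_{j=1}^k X_j$ and $Z(e_j)=0$. Because every $X$ and $X_j$ vanishes on $A$, each coordinate $Z(a)$ for $a\in A$ lies in $0\hplus\cdots\hplus 0=\{0\}$, so $\supp{Z}\cap A=\emptyset$ and $Z\backslash A$ is the required elimination.

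For contraction, I would define $\MM/A:=(\MM^*\backslash A)^*$, which is a strong $F$-matroid by the deletion case. The duality identities $(\MM/A)^*=\MM^*\backslash A$ and $(\MM\backslash A)^*=\MM^*/A$ then follow formally from $\MM^{**}=\MM$. What remains is to identify $\CC(\MM/A)$ with $\CC'':=\minsupp(\{X\backslash A:X\in\CC(\MM)\}-\{\0\})$. By the duality theorem quoted before Remark on fundamental circuits, $\CC(\MM/A)=\minsupp(\CC(\MM^*\backslash A)^\perp\setminus\{\0\})$. I would proceed in two steps. First, $\CC''\subseteq\CC(\MM^*\backslash A)^\perp$: for $W=X\backslash A$ with $X\in\CC(\MM)$ and any $Y\backslash A\in\CC(\MM^*\backslash A)$ lifted from $Y\in\CC^*(\MM)$ with $Y|_A\equiv 0$, the inner product $W\htimes (Y\backslash A)=\sum_{e\in E\setminus A}X(e)Y(e)^c$ equals $X\htimes Y\in N_G$ by circuit/cocircuit orthogonality in $\MM$. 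Second, every $W\in\CC(\MM/A)$ lies in $\CC''$: by Corollary~\ref{cor:underlying} the supports of $\CC(\MM/A)$ are precisely the circuits of the underlying matroid of $\MM/A$, which by classical matroid deletion--contraction duality is the contraction of the underlying matroid of $\MM$ by $A$; its circuits are exactly the minimal nonempty sets of the form $C\setminus A$ for $C$ a circuit of the underlying matroid of $\MM$. Using Corollary~\ref{cor:underlying} on $\MM$, lift to $X\in\CC(\MM)$ with $\supp{X\backslash A}=\supp{W}$; then $W$ and $X\backslash A$ both lie in $\CC(\MM^*\backslash A)^\perp$ with the same (minimal) support, so Incomparability in $\CC(\MM/A)$ forces $W=\alpha(X\backslash A)\in\CC''$.

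The main obstacle is the second step: matching the minimal-support set $\CC''$ with the orthogonally defined $\CC(\MM/A)$. The trick is to reduce it to the classical matroid fact about contraction at the level of supports via Corollary~\ref{cor:underlying}, and then use Incomparability together with the orthogonality check of step one to upgrade this support-level match to an equality of $F$-circuit sets. Everything else, including the duality formulas, follows formally once deletion is established and contraction is defined as its dual.
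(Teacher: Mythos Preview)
The paper does not prove this statement; it is quoted from \cite{BB17} (their Theorem~2.29) and stated without proof, so there is nothing to compare your argument against.

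Your approach is sound and is essentially how one would prove it. Two points deserve a bit more care. First, in the deletion SME step you assert that a modular family in $\CC'$ lifts to a modular family in $\CC(\MM)$ because ``modularity depends only on supports.'' Modularity is a height condition in the lattice of unions of circuit supports, and these two lattices are different in general. What makes the lift work is that the union $U=\supp{X}\cup\bigcup_j\supp{X_j}$ lies entirely in $E\setminus A$; hence any union of circuits of $\MM$ contained in $U$ is already a union of circuits disjoint from $A$, so the interval below $U$ is identical in both lattices and the heights agree. This is easy but should be said.

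Second, for contraction you prove $\CC(\MM/A)\subseteq\CC''$ but do not explicitly state the reverse inclusion. It does follow from your ingredients: step one places every $X\backslash A$ (hence all of $\CC''$) inside $\CC(\MM^*\backslash A)^\perp$, and the classical description of circuits of $M/A$ as the minimal nonempty sets $C\setminus A$ shows that elements of $\CC''$ have minimal support there, so $\CC''\subseteq\minsupp(\CC(\MM^*\backslash A)^\perp\setminus\{\0\})=\CC(\MM/A)$. You should also note that the underlying matroid of your $\MM/A=(\MM^*\backslash A)^*$ really is $M/A$; this follows from Proposition~\ref{prop:morph} applied to $\kappa$ together with your deletion description, but it is used in your argument and should be made explicit.
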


It turns out that the covector interpretations of deletion and contraction are  more problematic. In  contrast to the situation with matroids and oriented matroids,  {\em it is not always true that $\VV^*(\MM\backslash e)=\{X\backslash e: X\in\VV^*(\MM)\}$, nor that $\VV^*(M/e)=\{X\backslash e: X\in\VV^*(\MM), X(e)=0\}$.} This is illustrated by examples  in Sections~\ref{sec:nodel} and~\ref{sec:nocontr}. The best results we have are the following three propositions. (Recall the notation $Xe^0$ and $X\backslash e$ from the beginning of Section~\ref{sec:background}.)

\begin{prop} \label{prop:contraction} $\{X\backslash e: X\in\VV^*(\MM), X(e)=0\}\subseteq\VV^*(\MM/e)$ and
$\{X\backslash e: X\in\VV(\MM), X(e)=0\}\subseteq\VV(\MM\backslash e)$.
\end{prop}

\begin{proof} 
If $X\in\VV^*(\MM)$ then 
$ X\perp Y$ for all $Y\in\CC(\MM)$, and so if in addition $X(e)=0$ then $(X\backslash e)\perp (Y\backslash e)$ for all such $Y$. Since $\CC(\MM/e)\subseteq \{Y\backslash e:Y\in\CC(\MM)\}$, we have $X\backslash e\in\VV^*(\MM/e)$.
The second result follows because $\VV(\MM^*)=\VV^*(\MM)$ and $\MM^*/ e=(\MM\backslash e)^*$.
\end{proof}

\begin{prop}\label{prop:deletion}  $\{X\backslash e: X\in\VV^*(\MM)\}\subseteq \VV^*(\MM\backslash e)$ and $\{X\backslash e: X\in\VV(\MM)\}\subseteq \VV(\MM/ e)$
\end{prop}

\begin{proof} By Theorem~\ref{bakerdelcontr}, $\CC(\MM\backslash e)=\{X\backslash e: X\in\CC(\MM), X(e)=0\}$. Thus if $Y\in\CC(\MM)^\perp$ then $Y\backslash e\in\CC(\MM\backslash e)^\perp$. 
The second result follows because $\VV(\MM^*)=\VV^*(\MM)$ and $\MM^*\backslash e=(\MM/e)^*$.
\end{proof}

The following result is due to Ting Su.
\begin{prop} If $F$ is perfect then 
\begin{enumerate}
\item $\VV^*(M/e)=\{X\backslash e: X\in\VV^*(\MM), X(e)=0\}$.
\item $\VV(M\backslash e)=\{X\backslash e: X\in\VV(\MM), X(e)=0\}$.
\item $\VV^*(\MM\backslash e)^\perp=\{X\backslash e: X\in\VV^*(\MM)\}^\perp$ 
\item $\VV(\MM/ e)^\perp=\{X\backslash e: X\in\VV(\MM)\}^\perp$ 

\end{enumerate}
\end{prop}

\begin{proof} By Proposition~\ref{prop:deletion} we have that $\{X\backslash e: X\in\VV(\MM)\}\subseteq \VV(\MM/ e)$, and so
\begin{align*}
\VV^*(\MM/e)&=\VV(\MM/ e)^\perp\\
&\subseteq \{X\backslash e: X\in\VV(\MM)\}^\perp\\
&=\{Y: Y\perp X\backslash e\mbox{ for all $X\in\VV(\MM)$}\}\\
 &=\{Y: Ye^0\perp X\mbox{ for all $X\in\VV(\MM)$}\}\\
 &=\{Y'\backslash e: Y'\in\VV(\MM)^\perp, Y'(e)=0\}\\
 &=\{Y'\backslash e: Y'\in\VV^*(\MM), Y'(e)=0\}\\
 &\subseteq \VV^*(\MM/e).
 \end{align*}
 and thus the two subset relations must be equalities. The second relation is Result (1), and the first relation is Result (3). Result (2) follows from (1)   and Result (4) follows from (3).

  \end{proof}

It would be interesting to find direct characterizations of $\VV^*(\MM\backslash e)$ and $\VV^*(\MM/e)$ in terms of $\VV^*(\MM)$ for arbitrary matroids over tracts, and to fully characterize the  tracts $F$ such that, for all $F$-matroids $\MM$ and elements $e$ of $\MM$, 
$\VV^*(\MM\backslash e)=\{X\backslash e: X\in\VV^*(\MM)\}$ and $\VV^*(M/e)=\{X\backslash e: X\in\VV^*(\MM), X(e)=0\}$.

\subsection{Morphisms of tracts}\label{sec:morph}

\begin{prop} \label{prop:pushforward} If $f:F\to F'$ is a morphism of tracts and $\MM$ is a strong $F$-matroid then $\VV(f_*(\MM))\supseteq\{\alpha f(X): X\in\VV(\MM), \alpha\in F'\}$.
\end{prop}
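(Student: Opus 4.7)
The plan is to unwind both sides through the definitions given in Theorem~\ref{thm:cryptom} and Proposition~\ref{prop:morph}, reducing everything to a direct inner-product computation. By definition $\VV(f_*(\MM))=\CC^*(f_*(\MM))^\perp$, and by Proposition~\ref{prop:morph}
$$\CC^*(f_*(\MM))=\{\beta f(Y): Y\in\CC^*(\MM),\ \beta\in F'-\{0\}\}.$$
So it suffices to fix $X\in\VV(\MM)$, $\alpha\in F'-\{0\}$, and an arbitrary $Y\in\CC^*(\MM)$, $\beta\in F'-\{0\}$, and show $\alpha f(X)\perp\beta f(Y)$.

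The key step is the calculation of the inner product. Since $f$ is conjugation-preserving, $(f(Y(e)))^c=f(Y(e)^c)$ for each $e$. Using bilinearity of componentwise multiplication and the scalar action,
$$\alpha f(X)\htimes\beta f(Y)=\alpha\beta^c\sum_{e\in E} f(X(e))\cdot f(Y(e)^c)=\alpha\beta^c\,\tilde{f}\!\left(\sum_{e\in E} X(e)\cdot Y(e)^c\right)=\alpha\beta^c\,\tilde{f}(X\htimes Y),$$
where $\tilde{f}$ is the semiring map ${\mathbb N}[G]\to{\mathbb N}[G']$ from the definition of a morphism of tracts.

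Now $X\in\VV(\MM)=\CC^*(\MM)^\perp$ (by Theorem~\ref{thm:cryptom} applied to $\MM^*$, or directly from the definition of $\VV$), so $X\htimes Y\in N_G$. The morphism property $\tilde{f}(N_G)\subseteq N_{G'}$ then gives $\tilde{f}(X\htimes Y)\in N_{G'}$. Finally, since $\alpha\beta^c\in G'$, closure of $N_{G'}$ under the action of $G'$ (axiom (4) in Definition~\ref{def-tract}) yields $\alpha\beta^c\,\tilde{f}(X\htimes Y)\in N_{G'}$, i.e.\ $\alpha f(X)\perp\beta f(Y)$. This holds for every $\beta f(Y)\in\CC^*(f_*(\MM))$, so $\alpha f(X)\in\CC^*(f_*(\MM))^\perp=\VV(f_*(\MM))$.

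There is no real obstacle here; the only subtlety is remembering to invoke conjugation-preservation of $f$ so that the conjugate factor passes through $f$, and to observe that multiplication by the scalar $\alpha\beta^c\in G'$ keeps us in $N_{G'}$ by tract axiom (4). The inclusion is not in general an equality because $\CC(f_*(\MM))$ can be strictly larger than $\{\alpha f(X):X\in\CC(\MM)\}$'s orthogonal closure allows, but for the stated one-sided containment no modular-elimination input is needed.
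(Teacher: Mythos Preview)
Your proof is correct and follows essentially the same approach as the paper: both identify $\VV(f_*(\MM))=\CC^*(f_*(\MM))^\perp$, invoke Proposition~\ref{prop:morph} to describe $\CC^*(f_*(\MM))$, and then use that orthogonality is preserved under $f$. The paper simply asserts ``if $X\perp Y$ then $f(X)\perp f(Y)$'' and leaves the scalar multiples implicit, whereas you spell out the inner-product computation in full (including the roles of conjugation-preservation and tract axiom~(4)); this is the same argument with more detail filled in.
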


\begin{proof} By Proposition~\ref{prop:morph}, $\CC^*(f_*(\MM))=\{\alpha f(Y): Y\in\CC^*(\MM), \alpha\in F'-\{0\}\}.$ Also, if $X,Y\in F^E$ and $X\perp Y$ then $f(X)\perp f(Y)$. Thus 
\begin{align*}
(\CC^*(f_*(\MM)))^\perp&=\{ \alpha f(Y): Y\in\CC^*(\MM), \alpha\in F'-\{0\} \}^\perp\\
&\supseteq \{\alpha f(X):X\in\CC^*(\MM)^\perp,  \alpha\in F'\}\\
&=\{\alpha f(X):X\in\VV(\MM),  \alpha\in F'\}\qedhere
\end{align*}
\end{proof}

In general, equality will not hold. For instance, consider the inclusion $\iota:\R\to\C$ and an $\R$-matroid $\MM$ with $\VV(\MM)$ a two-dimensional subspace of $\R^n$ spanned by $\{v,w\}$. Then $\{\alpha\iota(X): X\in\VV(\MM), \alpha\in \C-\{0\}\}$ has topological dimension 3 in $\C^n$, but $\VV(\iota_*(\MM))$ is the complexification of $\VV(\MM)$, hence is a rank 2 linear subspace of $\C^n$, hence has topological dimension 4.

A bit more surprisingly, equality need not hold even when the morphism of tracts is surjective:
\begin{example} \label{ex:morph}
Consider the tract morphism $\ph:\C\to \P$ and the $\C$-matroids $\MM_1$ and $\MM_2$ with 
$$\VV^*(\MM_1)=\begin{pmatrix}
1&1+i&1&0\\
1+i&3i&0&1
\end{pmatrix}$$
and 
$$\VV^*(\MM_2)=\begin{pmatrix}
1&1+i&1&0\\
1+i&4i&0&1
\end{pmatrix}.$$
As shown in~\cite{AD},
$$\{\ph(X): X\in\CC^*(\MM_1)\}=\{\ph(X): X\in\CC^*(\MM_2)\}$$
but
$$\{\ph(X): X\in\VV^*(\MM_1)\}\not\subseteq\{\ph(X): X\in\VV^*(\MM_2)\}.$$
Thus $\ph_*(\MM_1)=\ph_*(\MM_2)$ and 
\begin{align*}
\VV^*(\ph_*(\MM_1))&\supseteq\{\ph(X): X\in\VV^*(\MM_1)\}\cup\{\ph(X): X\in\VV^*(\MM_2)\}\mbox{ by Proposition~\ref{prop:pushforward}}\\
&\supsetneq\{\ph(X): X\in\VV^*(\MM_2)\}\\
&=\{\alpha\ph(X): X\in\VV^*(\MM_2),\alpha\in\P-\{0\}\}.
\end{align*}
\end{example}
\subsection{Loops and coloops}

\begin{defn} A {\bf loop}\ resp.\  {\bf coloop}\  of an $F$-matroid is a loop resp.\  coloop of the underlying matroid.
\end{defn}

\begin{lemma} \label{lem:coloop} 1. $e$ is a loop of $\MM$ if and only if $X(e)=0$ for every $X\in\VV^*(\MM)$.

2. $e$ is a coloop of $\MM$ if and only if $\VV^*(\MM)=\{Xe^\alpha: X\in\VV^*(\MM\backslash e), \alpha\in F\}$.
\end{lemma}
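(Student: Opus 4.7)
The plan is to work through each part using the cryptomorphism $\VV^*(\MM)=\CC(\MM)^\perp$ from Theorem~\ref{thm:cryptom} together with the correspondence in Corollary~\ref{cor:underlying} between $G$-orbits of $F$-circuits and circuits of the underlying matroid.

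For part 1, in the forward direction I would note that $e$ being a loop means $\{e\}$ is a circuit of the underlying matroid, so by Corollary~\ref{cor:underlying} there is an $F$-circuit $C$ with $\supp{C}=\{e\}$, which I normalize via Symmetry so that $C(e)=1$. For any $X\in\VV^*(\MM)$, orthogonality $X\perp C$ says the one-term formal sum $X(e)\cdot 1^{c}=X(e)$ lies in $N_G$. If $X(e)\neq 0$, then $X(e)\in G\cap N_G$, and multiplying by $X(e)^{-1}$ (using that $N_G$ is closed under the $G$-action, axiom (4) of Definition~\ref{def-tract}) gives $1\in N_G$, contradicting axiom (2). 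For the converse, if $e$ is not a loop then $e$ lies in some basis $B$ of $\MM$, and $FC^*(e,B)\in\CC^*(\MM)\subseteq\VV^*(\MM)$ has $e$-coordinate $1\neq 0$, contradicting the hypothesis.

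For part 2, I first observe that $e$ is a coloop of $\MM$ iff $e$ is in no circuit of the underlying matroid, equivalently $Y(e)=0$ for every $Y\in\CC(\MM)$; by Theorem~\ref{bakerdelcontr} this also gives $\CC(\MM\backslash e)=\{Y\backslash e:Y\in\CC(\MM)\}$. Assuming $e$ is a coloop, the forward inclusion follows by a direct inner-product computation: for any $X\in\VV^*(\MM\backslash e)$, $\alpha\in F$, and $Y\in\CC(\MM)$,
\[
(Xe^{\alpha})\htimes Y \;=\; X\htimes(Y\backslash e)\;+\;\alpha\cdot 0^{c}\;=\;X\htimes(Y\backslash e)\in N_G,
\]
since $Y\backslash e\in\CC(\MM\backslash e)$. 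For the reverse inclusion, any $Z\in\VV^*(\MM)$ trivially decomposes as $Z=(Z\backslash e)e^{Z(e)}$, and $Z\backslash e\in\VV^*(\MM\backslash e)$ by the same orthogonality calculation. For the converse of part 2, given the displayed equality we take $X=\0$ and $\alpha=1$ to conclude that the characteristic vector supported at $e$ lies in $\VV^*(\MM)$; it has minimal nonzero support, so it lies in $\CC^*(\MM)=\minsupp(\VV^*(\MM)-\{\0\})$ by Theorem~\ref{thm:cryptom}. Hence $\{e\}$ is a cocircuit of the underlying matroid, which is exactly the condition for $e$ to be a coloop.

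No step here is really an obstacle; the whole argument is bookkeeping driven by Theorem~\ref{thm:cryptom}, Corollary~\ref{cor:underlying}, and Theorem~\ref{bakerdelcontr}. The most delicate point is the small tract argument in part 1 (forward), where one uses $G$-closure of $N_G$ together with $1\notin N_G$ to rule out $X(e)\in G\cap N_G$.
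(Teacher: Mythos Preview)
Your proof is correct and follows essentially the same approach as the paper: both use $\VV^*(\MM)=\CC(\MM)^\perp$ and the characterization of loops/coloops via circuits of the underlying matroid. Your argument is in fact more thorough---the paper leaves the converse of part~1 implicit and does not spell out the tract argument that a single-term sum in $N_G$ must be zero---and your converse of part~2 (concluding via $\{e\}$ being a cocircuit) is a minor rephrasing of the paper's conclusion that $Y(e)=0$ for all $Y\in\CC(\MM)$.
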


\begin{proof} 1. $e$ is a loop of $\MM$ if and only if $\{e\}$ is a circuit of the underlying matroid, hence if and only if there is a $Y\in\CC(\MM)$ with $\supp{Y}=\{e\}$. But then, for every $X\in\VV^*(\MM)$, $X\perp Y$  implies that $X(e)=0$.

2. Recall $\CC(\MM\backslash e)=\{Y\backslash e: Y\in\CC(\MM)\}$. Also, $e$ is a coloop of $\MM$ if and only if $e$ is not in any circuit of the underlying matroid. Thus
\begin{align*}
\mbox{$e$ is a coloop of $\MM$}&\Leftrightarrow Y(e)=0\ \forall Y\in\CC(\MM)\\
&\Leftrightarrow\CC(\MM)^\perp=\{Xe^\alpha: X\in\CC(\MM\backslash e)^\perp, \alpha\in F\}\\
&\Leftrightarrow\VV^*(\MM)=\{Xe^\alpha: X\in\VV^*(\MM\backslash e), \alpha\in F\}.\qedhere
\end{align*}
\end{proof}

\section{Examples}

\subsection{Rank 1 $F$-matroids}

Clearly rank 1 weak $F$-matroids are also strong $F$-matroids. Not surprisingly, their $F$-vectors are easy to describe.
\begin{prop}\label{prop:rank1} Let $F=G\cup\{0\}$ be a tract and $E$ a finite set.
\begin{enumerate}
\item Every nonzero $\phr\in F^E$ is a Grassmann-Pl\"ucker function for a rank 1 $F$-matroid.
\item If $\MM$ is a rank 1 $F$-matroid with Grassmann-Pl\"ucker function $\phr$ then 
\begin{enumerate}
\item $\CC^*(\MM)=\VV^*(\MM)-\{\0\}=G \phr$.
\item $\CC(\MM)=\{\alpha X_{e,f}:\phr(e)\phr(f)\neq 0, \alpha\in G\}\cup\{\alpha Y_e:\phr(e)=0, \alpha\in G\}$, where 
\begin{align*}
X_{e,f}(e)^c&=\phr(f)^{-1} \\
X_{e,f}(f)^c&=-\phr(e)^{-1} \\
X(g)&=0\qquad\mbox{if $g\not\in\{e,f\}$, }
\end{align*}
 and 
 \begin{align*}
 Y_e(e)&=1\\
 Y_e(g)&=0\qquad\mbox{ for all  $g\neq 0$.}
 \end{align*}
\item $\VV(\MM)=\{\phr\}^\perp$.
\end{enumerate}
\end{enumerate}
\end{prop}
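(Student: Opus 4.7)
The plan is to prove the four sub-statements in sequence, starting with the easiest and working toward the main calculation. For (1), I would observe that for $r=1$ the Grassmann--Plücker relations are indexed by a $2$-tuple $(y_1,y_2)$ and the empty $(r-1)$-tuple, so they reduce to $-\phr(y_1)\phr(y_2)+\phr(y_2)\phr(y_1)\in N_G$, which holds trivially. Hence any nonzero $\phr\in F^E$ is a rank 1 GP function, and by the GP cryptomorphism in \cite{BB17} it defines a unique strong $F$-matroid $\MM$. For (2a), the Baker--Bowler formula for fundamental cocircuits in terms of $\phr$ gives $FC^*(b,\{b\}) = \phr(b)^{-1}\phr$ for each nonloop $b$, whence Symmetry yields $\CC^*(\MM)=G\phr$. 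Using Lemma~\ref{lem:coloop}(1), every nonzero covector is supported on the set of nonloops, so any $\{b\}$ with $b$ a nonloop is a support basis of $\VV^*(\MM)$, and by Proposition~\ref{prop:urref} its unique reduced row-echelon form is $\{\phr(b)^{-1}\phr\}$. Definition~\ref{def:vectorax} together with Lemma~\ref{lem:coeffs} then forces $\VV^*(\MM)\subseteq F\phr$, and the reverse inclusion is automatic from $\CC^*(\MM)\subseteq\VV^*(\MM)$, so $\VV^*(\MM)-\{\0\} = G\phr = \CC^*(\MM)$.

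For (2b), the underlying matroid, being of rank 1, has as circuits exactly the singletons $\{e\}$ with $\phr(e)=0$ and the pairs $\{e,f\}$ of distinct nonloops. By Corollary~\ref{cor:underlying}, each of these corresponds to a single $G$-orbit in $\CC(\MM)$, so it suffices to exhibit one representative of each orbit lying in $\VV(\MM)=\CC^*(\MM)^\perp$; the final displayed equation of Theorem~\ref{thm:cryptom} then automatically places it in $\CC(\MM)$, since its support is a matroid circuit and hence minimal among supports of nonzero vectors. For $Y_e$, we have $Y_e\htimes\phr = \phr(e)^c = 0\in N_G$ trivially. For $X_{e,f}$, I would substitute the stated formulas into $X_{e,f}\htimes\phr = X_{e,f}(e)\phr(e)^c + X_{e,f}(f)\phr(f)^c$ and use the group structure to cancel the $\phr(e)^{\pm c}$ and $\phr(f)^{\pm c}$ factors; the resulting sum has the form $t\hplus(-t)$ for some $t\in G$, which lies in $N_G$ by axiom~(3) of Definition~\ref{def-tract}.

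For (2c), the chain $\VV(\MM) = \CC^*(\MM)^\perp = (G\phr)^\perp = \{\phr\}^\perp$ follows at once from (2a) combined with Lemma~\ref{lem:orbitreps}, since $\{\phr\}$ is a representative set for the unique $G$-orbit $G\phr$; the nonloop hypothesis merely guarantees $\phr\neq 0$ and hence that $G\phr$ is a genuine orbit. The main obstacle is the orthogonality check for $X_{e,f}$ in (2b): one must trace the conjugations and multiplicative inverses in the formulas with care to confirm that $X_{e,f}\htimes\phr$ really does present as a two-term sum $t\hplus(-t)$, since this pattern is essentially the only route to certifying membership in $N_G$ using only the axioms of a tract. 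Every other step reduces either to a routine rank 1 matroid consideration or to a direct application of the dictionary between circuits, cocircuits, and $F$-vectors established in Theorem~\ref{thm:cryptom}.
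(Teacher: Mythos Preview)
Your overall strategy is correct and is more explicit than the paper's own proof, which is a one-line appeal to the definitions and cryptomorphisms in \cite{BB17}. The arguments for (1), (2a), and (2c) are sound. However, the orthogonality verification for $X_{e,f}$ in (2b) does not come out as you claim, and this is a genuine gap---in fact it reveals a typo in the statement itself.

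Carry out the computation you sketch (it is cleanest to use $\phr\htimes X_{e,f}$, which involves the given values $X_{e,f}(g)^c$ directly):
\[
\phr\htimes X_{e,f} \;=\; \phr(e)\,X_{e,f}(e)^c + \phr(f)\,X_{e,f}(f)^c \;=\; \phr(e)\phr(f)^{-1} - \phr(f)\phr(e)^{-1},
\]
which is \emph{not} of the form $t + (-t)$ unless $\phr(e)^2 = \phr(f)^2$. Concretely, over $F=\R$ with $E=\{1,2\}$ and $\phr=(2,3)$, the displayed formulas give $X_{1,2}=(\tfrac{1}{3},-\tfrac{1}{2})$, and $\tfrac{1}{3}\cdot 2 + (-\tfrac{1}{2})\cdot 3 = -\tfrac{5}{6}\neq 0$. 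The intended formulas are $X_{e,f}(e)^c = \phr(f)$ and $X_{e,f}(f)^c = -\phr(e)$ (drop the inverses); with that correction $\phr\htimes X_{e,f} = \phr(e)\phr(f) - \phr(f)\phr(e)\in N_G$ and your argument goes through. You rightly flagged this step as the one requiring care---actually performing the substitution, rather than asserting its outcome, would have exposed the problem.
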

This follows immediately from the definitions of the various objects and the crytomorphisms in~(\cite{BB17}).

\subsection{Matroids}

\begin{prop}\label{prop:matroidvect} Let $\MM$ be a $\K$-matroid. Then 
$$\VV^*(\MM)=\{X\in\K^E: \supp{X}\mbox{ is a union of cocircuits of $\MM$}\}.$$
\end{prop}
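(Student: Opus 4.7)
My plan is to apply Theorem~\ref{thm:cryptom} in the form $\VV^*(\MM)=\CC(\MM)^\perp$, so that the task reduces to determining combinatorially which $X\in\K^E$ are orthogonal to every $\K$-circuit.

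First I would unpack orthogonality over $\K$. Since $c=\id$ and every nonzero value in $\K$ equals $1$, for $X,Z\in\K^E$
$$X\htimes Z=\sum_{e\in\supp{X}\cap\supp{Z}}1\in\mathbb N[G],$$
so $X\perp Z$ precisely when this formal sum lies in $N_G$, equivalently when $0\in\bighplus_{j=1}^k 1$, where $k:=|\supp{X}\cap\supp{Z}|$. A short induction from $0\hplus 1=\{1\}$ and $1\hplus 1=\{0,1\}$ shows the hypersum equals $\{0\}$ for $k=0$, $\{1\}$ for $k=1$, and $\{0,1\}$ for $k\ge 2$. Hence $X\perp Z$ iff $|\supp{X}\cap\supp{Z}|\ne 1$.

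By Corollary~\ref{cor:underlying}, the supports of the elements of $\CC(\MM)$ are exactly the circuits of the underlying matroid of $\MM$. Combined with the previous step,
$$\VV^*(\MM)=\{X\in\K^E:|\supp{X}\cap C|\ne 1\text{ for every circuit }C\text{ of the underlying matroid}\}.$$
To conclude I would invoke the standard matroid-theoretic equivalence: a set $S\subseteq E$ has no one-element intersection with any circuit if and only if $E-S$ is a flat, if and only if $S$ is a union of cocircuits. The first equivalence follows from the circuit characterization of closure ($e\in\mathrm{cl}(A)$ iff some circuit $C$ satisfies $e\in C\subseteq A\cup\{e\}$), and the second from the fact that the complement of any flat decomposes as the union of the cocircuits it contains. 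Applying this with $S=\supp{X}$ yields the stated description of $\VV^*(\MM)$.

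The only potential obstacle is bookkeeping: the orthogonality computation in $\K$ and the identification of unions of cocircuits with complements of flats are both routine, so I expect no serious difficulty.
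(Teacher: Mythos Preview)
Your argument is correct. Both you and the paper begin with the same observation that in $\K$ one has $X\perp Z$ iff $|\supp X\cap\supp Z|\ne 1$, and the forward inclusion (unions of cocircuits lie in $\VV^*(\MM)$) is essentially identical. The difference is in the converse. The paper proves it by induction on the number of nonloops in $X^0$: given $X\in\VV^*(\MM)$ with a nonloop $e\in X^0$, it contracts $e$, invokes Proposition~\ref{prop:contraction} to place $X\backslash e$ in $\VV^*(\MM/e)$, applies the induction hypothesis there, and then lifts the resulting cocircuits back to $\MM$. You instead appeal directly to the classical matroid equivalence ``$S$ meets no circuit in exactly one point $\Leftrightarrow$ $E-S$ is closed $\Leftrightarrow$ $S$ is a union of cocircuits''. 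Your route is shorter and avoids the machinery of $F$-matroid contraction; the paper's route has the virtue of staying inside the framework it is building (and incidentally exercises Proposition~\ref{prop:contraction}). One small quibble: the identity $\VV^*(\MM)=\CC(\MM)^\perp$ is the \emph{definition} of $\VV^*(\MM)$ in this paper, so you need not cite Theorem~\ref{thm:cryptom} for it.
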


\begin{proof} If $\supp{X}=\bigcup_{j=1}^k X_j$ with each $X_j\in\CC^*(\MM)$,   then consider $Y\in\CC(\MM)$. Note that two elements $A,B$ of $\K^E$ are orthogonal if and only if $|\supp{A} \cap\supp{B}|\neq 1$. Since $\supp{X}\cap\supp{Y}=\bigcup_{j=1}^k(\supp{X_j}\cap\supp{Y})$ and $|\supp{X_j}\cap\supp{Y}|\neq 1$ for every $j$, we have that $X\perp Y$. Hence $X\in\CC(\MM)^\perp=\VV^*(\MM)$.

To see the converse, we induct on the number of nonloops in $X^0$ for $X\in\VV^*(\MM)$. If $X^0$ contains only loops, then $\supp{X}$ is the union of all cocircuits of $\MM$. Otherwise, let $e\in X^0$ be a nonloop. Then 
$X\backslash e\in\VV^*(\MM/e)$
by Proposition~\ref{prop:contraction}, and by our induction hypothesis 
$\supp{X\backslash e}=\supp{X_1}\cap\cdots\cap\supp{X_k}$ for some $\K$-cocircuits $X_1,\ldots, X_k$ of $\MM\backslash e$. But then $X_1e^0,\ldots, X_ke^0\in\CC^*(\MM)$ by Theorem 2.29 of~\cite{BB17},
and $\supp{X}$ is the union of the supports of these.
  \end{proof}

Another explanation for this equality can be given by the Inflation Property (Section~\ref{sec:absorp}).

\subsection{Oriented matroids}

\begin{prop}\label{cor:OM} $\WW\subseteq \S^E$ is the set of  $\S$-vectors of an $\S$-matroid with $\S$-circuit set $\CC$ if and only $\WW$ is the set of signed vectors of an oriented matroid with  signed circuit set $\CC$.
\end{prop}

\begin{proof} For $F=\S$ the $F$-circuit axioms coincide with the usual signed circuit axioms (Corollary 1 in~\cite{Del11} and Theorem 3.6.1 in~\cite{BLSWZ}). Further, $\WW$ is the set of signed vectors of an oriented matroid if and only if $\WW=(\CC^*)^\perp$ for some signed circuit set $\CC^*$ (Proposition 3.7.12 in~\cite{BLSWZ}). In either the usual oriented matroid context of the $\S$-matroid context, the set of signed circuits/$\S$-circuits corresponding to $\WW$ is $\minsupp(\WW-\{\0\})$.
\end{proof}

\subsection{Phased matroids}\label{sec:phased}

Phased matroids (i.e., $\P$-matroids) seem to be where conjectures on this subject go to die. This section will give examples to show that various properties satisfied by $F$-covector sets when $F$ is a field or $F=\S$ fail to hold when $F=\P$.

\subsubsection{Topological closure}

If $F$ is a topological field and $S\subset F^E$ then $S^\perp$ is closed. In particular, each $F$-covector set $\VV^*(\MM)$ is closed.
This does not generalize in any nice way to tracts.

Associating topologies to tracts is a tricky business (cf. Section 8 of~\cite{Viro1}, \cite{ADavis}, \cite{jun}). Any topology on a tract $F$ induces a topology on $F^E$ and hence on each $F$-covector set. As the following shows, in order to choose a topology on $F$ that makes all $F$-covector sets closed, we may have to sacrifice obvious desirable properties.

\begin{prop} 1. If $\S$ is given a topology such that $\sign:\R\to\S$ is continuous then there is a rank 2 $\S$-matroid $\MM_1$ on 3 elements such that $\VV^*(\MM_1)$ is not closed in $\S^3$.

\noindent 2.  If $\P$ is given a topology such that $\ph:\C\to\P$ is continuous then there is a rank 2 $\P$-matroid $\MM_2$ on 3 elements such that $\VV^*(\MM_2)-\{(0,0,0)\}$ is not closed in $(\P-\{0\})^3$.
\end{prop}

\begin{proof} Let $${\mathbf A}=\begin{pmatrix}
1&0&1\\
0&1&1
\end{pmatrix}.$$

Assume $\S$ is given a topology such that $\sign:\R\to\S$ is continuous. Thus the only open neighborhood of 0 in $\S$ is $\S$. Let $\MM_1'$ be the $\R$-matroid with $\VV^*(\MM_1')=\row(A)$, and let $\MM_1=\sign_*(\MM_1')$.  The triple $(+,+,0)$ is in the complement of $\VV^*(\MM_1)$, but any open neighborhood of $(+,+,0)$ in $\S^3$ contains $(+,+,+)\in\VV^*(\MM_1)$. Thus the complement of $\VV^*(\MM_1)$ is not open.

Assume $\P$ is given a topology such that $\ph:\C\to\P$ is continuous. Thus if $x\in\P-\{0\}$ then any open neighborhood of $x$ in $\P$ contains a set that is an open neighborhood of $x$ with respect to the usual topology on the unit circle in $\C$. Let $\MM_2'$ be the $\C$-matroid with $\VV^*(\MM_2')=\row(A)$, and let $\MM_2=\ph_*(\MM_2')$.   It is a straightforward exercise to check that $\VV^*(\MM_2)=\{(\alpha, \beta, \gamma)\in\P^3:\gamma\in \alpha\hplus \beta\}$.
Thus the triple $(1,i,1)$ is in the complement of $\VV^*(\MM_2)$, but any open neighborhood of $(1,i,1)$ in $(\P-\{0\})^3$ contains triples $(1,i,\alpha)\in\VV^*(\MM_2)$. Thus the complement of $\VV^*(\MM_2)-\{(0,0,0)\}$ in $(\P-\{0\})^3$ is not open.
\end{proof}

%
%
 
\subsubsection{ Linear independence  counterexample}\label{sec:lincomb}

Let $\MM_1$ and $\MM_2$ be as in Example~\ref{ex:morph}, and let $\MM=\ph_*(\MM_1)=\ph_*(\MM_2)$. This $\MM$  will shoot down the hope for a simpler characterization of $F$-vector sets expressed in Remark~\ref{rem:simpledef}.

\begin{defn}[cf. \cite{BB17}] $\{X_1, \ldots, X_k\}\subseteq F^E$ is {\bf linearly dependent}  if there exist elements  $c_1, \ldots, c_k$ of $F$,  not all $0$, such that $\0\in\bighplus_{j=1}^k c_j X_j$.

A subset of $F^E$ is {\bf linearly independent} if it is not linearly dependent.
\end{defn}

Linear independence behaves badly. A $\P$-vector set may have maximal linearly independent subsets of different sizes.
In Example~\ref{ex:morph},  $(2+i, 1+4i, 1,1)\in\VV^*(\MM_1)$ and 
$(2+i, 1+5i, 1,1)\in\VV^*(\MM_2)$, and so $X_1:=\ph(2+i, 1+4i, 1,1)$ and $X_2:=\ph(2+i, 1+5i, 1,1)$ are elements of $\VV^*(\MM)$. Each reduced row-echelon form for $\VV^*(\MM)$ is a maximal linearly independent set of size 2, but also $\VV^*(\MM)$ contains  $S:=\{X_1, X_2, \ph(1, 1+i, 1,0)\}$, which is  linearly independent.

Further, the element $\ph(1+i, 3i,0,1)$ of $\VV^*(\MM)$ can be expressed as a linear combination of $S$ both as
$$\ph(1+i, 3i,0,1)\in (-1)X_1\hplus 0X_2\hplus \ph(1, 1+i, 1,0)$$
and as 
$$\ph(1+i, 3i,0,1)\in 0X_1\hplus (-1)X_2\hplus \ph(1, 1+i, 1,0)$$
thus killing the  simpler characterization of $F$-vector sets hoped for in Remark~\ref{rem:simpledef}.

\subsubsection{Phase diagrams}

This section introduces a visualization tool that will be helpful in later counterexamples.

We will depict $X\in (S^1\cup\{0\})^n$ by labelled points on a picture of $S^1$. (If $X(f)=0$ then the label $f$ is not used.) We call this the {\bf phase diagram} for $X$. Thus, for instance, the leftmost circle in Figure~\ref{fig:vperpex} depicts $(1,0,\exp(i\pi/2),\exp(-i\pi/2))$. It is easy to see from the phase diagram, for instance, whether $X\in \P^n$ or $X\in\TP^n$ is orthogonal to $(1,1,\ldots, 1)$:
\begin{enumerate}
\item  In $\P^n$, $X\neq\0$ is orthogonal to $(1,1,\ldots, 1)$ if and only either the phase diagram has only two points (possibly with multiple labels), which are antipodal to each other, or the points in the phase diagram for $X$ do not all lie in a common closed half-circle.
\item In $\TP^n$, $X\neq\0$ is orthogonal to $(1,1,\ldots, 1)$ if and only if the points in the phase diagram for $X$ do not all lie in a common open half-circle.
\end{enumerate}

\subsubsection{Duality counterexample}\label{sec:dualitycounterex}

Now we give the example promised in Section~\ref{sec:duality}, showing that for some $\P$-matroids $\MM$, $\VV^*(\MM)\neq\VV(\MM)^\perp$.
 
Let\footnote{A Mathematica notebook verifying all of the linear algebra calculations of Section~\ref{sec:phased} is available on the arXiv.}
 $$V=\row\begin{pmatrix}
 1&0&1+i&1-i\\
 0&1&1-i&1+i
 \end{pmatrix}  
 =\row\begin{pmatrix}
  1-i&1+i&4&0\\
 1+i&1-i&0&4
 \end{pmatrix}.    $$
Let $\MM_V$ denote the $\C$-matroid with $\VV^*(\MM_V)=V$, and let $\MM:=\ph_*(\MM_V)$. 

We can read off our four $S^1$-orbits of $\P$-cocircuits of  $\MM$ from these four row vectors: one representative from each orbit is shown in Figure~\ref{fig:vperpex}.

\begin{figure}
\begin{tikzpicture}
\draw (5,5) circle (1cm);
\draw [fill] (6,5) circle (.1cm); \node at (6.4,5) {$1$};
\draw [fill] (5.7,5.7) circle (.1cm); \node at (6.1,5.8) {$3$};
\draw [fill] (5.7,4.3) circle (.1cm); \node at (6,4.2) {$4$};

\draw (8,5) circle (1cm);
\draw [fill] (9,5) circle (.1cm); \node at (9.4,5) {$2$};
\draw [fill] (8.7,5.7) circle (.1cm); \node at (9.1,5.8) {$4$};
\draw [fill] (8.7,4.3) circle (.1cm); \node at (9,4.2) {$3$};

\draw (11,5) circle (1cm);
\draw [fill] (12,5) circle (.1cm); \node at (12.4,5) {$3$};
\draw [fill] (11.7,5.7) circle (.1cm); \node at (12.1,5.8) {$2$};
\draw [fill] (11.7,4.3) circle (.1cm); \node at (12,4.2) {$1$};

\draw (14,5) circle (1cm);
\draw [fill] (15,5) circle (.1cm); \node at (15.4,5) {$4$};
\draw [fill] (14.7,5.7) circle (.1cm); \node at (15.1,5.8) {$1$};
\draw [fill] (14.7,4.3) circle (.1cm); \node at (15,4.2) {$2$};

\end{tikzpicture}
\caption{$\P$-cocircuits of $\MM$ for Section \ref{sec:dualitycounterex}\label{fig:vperpex}}
\end{figure}
From the figure it's clear that any element of $(S^1)^4$ sufficiently close to $(1,1,-1,-1)$ will be orthogonal to each of these $\P$-cocircuits, hence will be in  $\VV(\MM)$. For a concrete example, $X:=(1,1,\exp({i(\pi+.01)}),\exp({i(\pi+.01)}))\in\VV(\MM)$. But also $(1,1,2,2)\in V=\VV^*(\MM_V)$, and so by Proposition~\ref{prop:pushforward} $\ph(1,1,2,2)=(1,1,1,1)\in\VV^*(\MM)$. But $X\not\perp (1,1,1,1)$.

\subsubsection{Deletion counterexample}\label{sec:nodel}

Here is the horrible example promised in Section~\ref{sec:delcontr}, showing that $\VV^*(\MM\backslash e)$ need not be $\{X\backslash e: X\in\VV^*(\MM)\}$.

\begin{example}\label{ex:el:bad} (A slight variation on this example arose in~\cite{AD}.) Let
\[ V:=\row
\begin{pmatrix}
0 & -1 & 0 & 0 & i& 1-i &1\\
 -1& 0  &-1 & 0 &-i& 3+i &2\\
 0 & -i & 0 & 2i&-i& -2i &-i\\
 0 & 0  &-i &i+1& 0& -2  &-1\\
\end{pmatrix}\]
and let $\MM_V$ be the $\C$-matroid with $\VV^*(\MM_V)=V$.
Let $\MM=\ph_*(\MM_V)$. We will find an element of $\VV^*(\MM\backslash 7)$ that is not in $\{X\backslash 7: X\in\VV^*(\MM)\}$.

By definition, $\CC(\MM_V\backslash 7)= \minsupp(\{X\backslash 7: X\in V^\perp, X(7)=0\}-\{\0\})$. We list a representative from each 
$(\C-\{0\})$-orbit below. 
$$\begin{array}{rccccccl}
\big(&-1+i,&-i,&1,&\frac{1}{2}-\frac{i}{2},&1,&0&\big)\\ [12pt]
\big(&2-i,&1+i,&1,&\frac{3}{2}+\frac{i}{2},&0,&1&\big)\\ [12pt]
\big(&5-5i,&1+3i,&-2-2i,&0,&-3-i,&1-i&\big)\\ [12pt]
\big(&3-2i,&1+2i,&0,&1+i,&-1,&1&\big)\\ [12pt]
\big(&3+4i,&0,&5,&\frac{7}{2}+\frac{i}{2},&3 -i,&2+i&\big)\\ [12pt]
\big(&0,&7-4i,&13,&\frac{25}{2}-\frac{5i}{2},&8+i,&5-i&\big)
\end{array}
$$

 Let $Z_0=(1,1,-1,1,1,-1)\in\P^6$. It is easy to check, by drawing phase diagrams,  that any element of $(S^1)^6$ sufficiently close to $Z_0$  is orthogonal to $\ph(X)$ for each representative $X$ above, hence is in $\VV^*(\MM\backslash 7)$. In particular, if we let $\alpha=\exp(i\epsilon)$ with $\epsilon>0$ small, then  $Z_1=(\alpha,1,-1,1,\alpha,-1)$  is in  $\VV^*(\MM\backslash 7)$.

On the other hand, notice that 
$Y_1:=(1,1,1,1,0,0,1)$ and $Y_2:=(0,0,1,1,1,1,-1)$ are both in $\minsupp(V^\perp-\{\0\}))= \CC(\MM_V)$. Since $Y_1=\ph(Y_1)$ and $Y_2=\ph(Y_2)$, we have $Y_1, Y_2\in\CC(\MM)$. Consider a  $Z\in\P^7$ such that $Z\backslash 7=Z_1$. Then 
$$Z\htimes Y_1=\alpha+1+-1+1+ Z(7)$$
and $$Z\htimes Y_2=-1+1+ \alpha-1 -Z(7).$$
Thus
$Z$ is orthogonal to $Y_1$ if and only if  $Z(7)$ in the lower open half-circle of $S^1$, while $Z$ is orthogonal to $Y_2$ if and only if  $Z(7)$ in the upper open half-circle of $S^1$. Thus $Z_1\in\VV^*(\MM\backslash 7)- \{Z\backslash 7: Z\in\VV^*(\MM)\}$.
\end{example}

\subsubsection{Contraction counterexample}\label{sec:nocontr}
 This section gives an example of a rank 3 $\P$-matroid $\MM'$ on elements $[6]$ such that $\VV^*(\MM'/6)\supsetneq \{X\backslash 6: X\in\VV^*(\MM'), X(6)=0\}$.
 
Let
\[ W:=\row
\begin{pmatrix}
3&0&0&1&1&-3\\
0&3&0&1&1&3+3i\\
0&0&3&1&1&3-3i
\end{pmatrix}.\]

Thus 
\[W^\perp:=\row
\begin{pmatrix}
1&-1+i&-1-i&0&0&1\\
1&1&1&0&-3&0\\
1&1&1&-3&0&0
\end{pmatrix}.\]

Let $\MM_W$ be the $\C$-matroid with $\VV^*(M_W)=W$, and thus $\CC(\MM_W)=\minsupp(W^\perp-\{\0\})$. From the second matrix above we can find a representative from each $(\C-\{0\})$ orbit of $\CC(\MM_W)$, as follows.
$$\begin{array}{rccccccl}
(&1,&-1+i,&-1-i,&0,&0,&1&)\\
(&2+i,&2i,&0,&-3-3i,&0,&1&)\\
(&2-i,&0,&-2i,&-3+3i,&0,&1&)\\
(&0,&2-i,&2+i,&-3,&0,&-1&)\\
(&2+i,&2i,&0,&0,&-3-3i,&1&)\\
(&2-i,&0,&-2i,&0,&-3+3i,&1&)\\
(&0,&2-i,&2+i,&0,&-3,&-1&)\\[12pt]

(&1,&1,&1,&0,&-3,&0&)\\
(&1,&1,&1,&-3,&0,&0&)\\
(&0,&0,&0,&1,&-1,&0&)\\
\end{array}$$
Let $\MM'=\ph(\MM_W)$. Thus the phases of the elements of the above list give us a list of representatives for the $(\P-\{0\})$ orbits in $\CC(\MM')$. Since $\CC(\MM'/6)=\minsupp\{X\backslash 6: X\in\CC(\MM)\}$, the phases of the first seven elements of the above list, with their sixth components removed, give us the elements of $\CC(\MM'/6)$. By drawing phase diagrams we check that any element of $(S^1)^5$ sufficiently close to $(1,1,1,1,1)$ is orthogonal to each of these 7 elements of $\CC(\MM'/6)$, hence is in $\VV^*(\MM'/6)$. For instance, if $\beta\in S^1$ is close to but not equal to 1, then $(1,1,1,1,\beta)\in\VV^*(\MM/6)$. But certainly $(1,1,1,1,\beta,0)\not\in\VV^*(\MM')$, since $(1,1,1,1,\beta,0)$ is not orthogonal to the element $(0,0,0,1,-1,0)$ of $\CC(\MM')$.

\section{Flats and Composition}\label{sec:flatcompos}

When we think of oriented matroids as matroids with extra structure, the signed covector set of an oriented matroid can be thought of as extra structure on the lattice of flats of the underlying matroid. That every flat underlies some signed covector follows from the  Composition Axiom for oriented matroids. 

For general $F$-matroids, this falls apart: for instance,  for an $\F_2$-matroid $\MM$, not every flat need arise as the underlying flat of an element of $\VV^*(\MM)$.  Section~\ref{sec:flats} will sketch the relationship between flats and $F$-covectors, and Section~\ref{sec:compos} will propose a general notion of a ``composition operation", defined for a particular tract $F$, so that existence of a composition operation implies the same relationship between flats and $F$-covector sets as we have in the case of oriented matroids. We will then explore composition operations on some particular tracts.

\subsection{Flats}\label{sec:flats}

\begin{defn} A {\bf hyperplane} of a matroid is the complement of a cocircuit.

A {\bf flat} of a matroid is an intersection of hyperplanes.
\end{defn}

As always, when we refer to matroid properties of an $F$-matroid we mean properties of the underlying matroid. 

\begin{prop}\label{prop:flat1} Let $F$ be a tract and $\MM$ an $F$-matroid.
\begin{enumerate}
\item The set of hyperplanes of $\MM$ is $\{X^0: X\in\CC^*(\MM)\}$.
\item $\{X^0:X\in\VV^*(\MM)\}$ is a subset of the set of flats of $\MM$.
\item If $F$ is an infinite field or $F\in\{\K,\S\}$ then $\{X^0:X\in\VV^*(\MM)\}$ is exactly the set of flats of $\MM$.
\item If $F$ is an infinite tract satisfying the Weak Closure Property then $\{X^0:X\in\VV^*(\MM)\}$ is exactly the set of flats of $\MM$.
\item If $F$ is a finite field then there is an $F$-matroid $\MM$ such that $\emptyset$ is a flat and $\emptyset\not\in\{X^0:X\in\VV^*(\MM)\}$.
\end{enumerate}
\end{prop}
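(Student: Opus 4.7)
Parts (1) and (2) are direct. For (1), the hyperplanes of $\MM$ are by definition the complements of cocircuits of the underlying matroid, and Corollary~\ref{cor:underlying} identifies these cocircuits with the supports $\supp{X}$ for $X\in\CC^*(\MM)$, whose complements are the zero sets $X^0$. For (2), given $X\in\VV^*(\MM)$, apply Proposition~\ref{prop:pushforward} to the tract morphism $\kappa:F\to\K$: then $\kappa(X)\in\VV^*(\kappa_*(\MM))$, and Proposition~\ref{prop:matroidvect} gives that $\supp{\kappa(X)}=\supp{X}$ is a union of cocircuits of the underlying matroid. Hence $X^0$ is an intersection of hyperplanes, i.e., a flat.

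For the reverse inclusion in (3) and (4), fix a flat $F'\subsetneq E$ (the case $F'=E$ is witnessed by $X=\0$) and write $E-F'=\bigcup_{i=1}^k\supp{Y_i}$ with $Y_i\in\CC^*(\MM)$. For $F=\K$, the indicator function of $E-F'$ lies in $\VV^*(\MM)$ by Proposition~\ref{prop:matroidvect}. For $F=\S$, apply Corollary~\ref{cor:OM} and the oriented matroid composition operation $\circ$: the composed signed covector $Y_1\circ\cdots\circ Y_k\in\VV^*(\MM)$ has support $E-F'$. For $F$ an infinite field, consider $X=\sum_{i=1}^k c_iY_i$; the condition that $X(e)=0$ for any fixed $e\in E-F'$ defines a proper linear subspace of $F^k$, and the union of these finitely many proper subspaces cannot exhaust the infinite space $F^k$, so some choice of $(c_1,\ldots,c_k)$ gives $X^0=F'$.

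For (4), I would proceed by induction on $k$ using the Weak Closure Property. Set $Z_1=Y_1\in\CC^*(\MM)\subseteq\VV^*(\MM)$, and suppose inductively that $Z_{j-1}\in\VV^*(\MM)$ has support $\bigcup_{i<j}\supp{Y_i}$. For any $\lambda\in F-\{0\}$, Weak Closure gives some $Z_j\in(Z_{j-1}\hplus\lambda Y_j)\cap\VV^*(\MM)$. Outside $\supp{Z_{j-1}}\cap\supp{Y_j}$, the identity $0\hplus x=\{x\}$ forces $Z_j$ to be nonzero wherever $Z_{j-1}$ or $\lambda Y_j$ is. For $e\in\supp{Z_{j-1}}\cap\supp{Y_j}$, we have $Z_j(e)=0$ only if $Z_{j-1}(e)+\lambda Y_j(e)\in N_G$; by the tract axiom demanding a unique $\eta$ with $1+\eta\in N_G$ (applied after multiplying by $Z_{j-1}(e)^{-1}$), this singles out one forbidden value $\lambda=-Z_{j-1}(e)/Y_j(e)$ per such $e$. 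Since $F$ is infinite, pick $\lambda$ avoiding these finitely many forbidden values; then $\supp{Z_j}=\supp{Z_{j-1}}\cup\supp{Y_j}$. After $k$ steps, $Z_k^0=F'$. The main obstacle here is verifying that the forbidden-$\lambda$ set is finite for a general tract rather than merely for a hyperfield; this reduces, as indicated, to the uniqueness-of-$\eta$ axiom.

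For (5), take $\MM=U_{2,q+1}$ realized over $\F_q$ by the $q+1$ distinct points $a_1,\ldots,a_{q+1}\in\F_q^2$ of $\PP^1(\F_q)$. The underlying matroid has no loops, so $\emptyset$ is a flat. Parametrizing $\VV^*(\MM)$ as $\F_q^2$ via the row-space map, the coordinate slice $\{X\in\VV^*(\MM):X(e)=0\}$ corresponds to the line in $\F_q^2$ orthogonal to $a_e$, and the distinct points of $\PP^1(\F_q)$ yield $q+1$ distinct lines through the origin. Their union has cardinality $(q+1)(q-1)+1=q^2=|\VV^*(\MM)|$, so every covector has some zero coordinate, and hence $\emptyset\not\in\{X^0:X\in\VV^*(\MM)\}$.
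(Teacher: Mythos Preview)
Your proof is correct and follows essentially the same approach as the paper's. Your references for (1) and (2) match the paper's (Corollary~\ref{cor:underlying}/Proposition~\ref{prop:morph}, Proposition~\ref{prop:pushforward}, Proposition~\ref{prop:matroidvect}); your arguments for $F=\K$ and $F=\S$ in (3) are identical; your inductive use of the Weak Closure Property in (4), choosing $\lambda$ to avoid the finitely many forbidden values coming from uniqueness of $\eta$, is exactly the paper's argument, though you spell out the induction more carefully than the paper does; and your example in (5) is the same rank~$2$ matroid on $q+1$ elements (the points of $\PP^1(\F_q)$) that the paper writes down via an explicit $2\times(q+1)$ matrix. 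The only cosmetic difference is that for infinite fields you give the direct ``union of proper subspaces'' argument, whereas the paper absorbs that case into (4).
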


\begin{proof} (1) follows from Proposition~\ref{prop:morph}, and (2) follows from Proposition~\ref{prop:pushforward} applied to $\kappa: F\to \K$ and Proposition~\ref{prop:matroidvect}.

If $F=\K$ then by Proposition~\ref{prop:matroidvect} $\{X^0:X\in\VV^*(\MM)\}$ is exactly the set of flats of $\MM$. If $F=\S$ then the Composition Axiom for oriented matroids implies that $\{X^0:X\in\VV^*(\MM)\}$ is exactly the set of flats of $\MM$. 

Since a field is an example of a tract satisfying the Weak Closure Property, we prove the remainder of (3) by proving (4). Let $X, Y\in F^E$. For every $e\in E$ there is at most one $\alpha_e\in F$ such that $X(e)=-\alpha_eY(e)$. Thus there are only finitely many values $\alpha\in F$ such that $X\hplus\alpha Y$ contains an element $Z$ such that $\supp{Z}\neq\supp{X}\cup\supp{Y}$. Proceeding inductively, we see that for any $X_1, \ldots X_k\in F^E$, there are only finitely many values $\alpha_2, \ldots, \alpha_k\in F$ such that $X_1\hplus\bighplus_{j=2}^k\alpha_j X_j$ contains an element $Z$ such that $\supp{Z}\neq\bigcup_{j=1}^k\supp{X_j}$. Any flat has the form $\bigcap_{j=1}^k X_j^0$ for cocircuits $X_1, \ldots, X_k$, so by taking an appropriate linear combination of these $X_j$ we get $Z\in\VV^*(\MM)$ with $Z^0=\bigcap_{j=1}^k X_j^0$.

To prove (5), let $F-\{0\}=\{a_1,\ldots, a_n\}$, and let $\MM$ be the rank 2 $F$-matroid on $E=[n+2]$ with 
$$\VV^*(M)=\row
\begin{pmatrix}
1 & 0 & 1 & 1 & \cdots& 1\\
 0& 1  &a_1 & a_2 &\cdots&a_n 
\end{pmatrix}$$
Then the hyperplanes of $\MM$ are exactly the single-element subsets of $E$, and so the empty set is a flat. However, any element of $\VV^*(\MM)$ (i.e., any linear combination of the two rows) has a 0 coordinate.
\end{proof}

It would be interesting to characterize the tracts for which $\{X^0:X\in\VV^*(\MM)\}$ is not always the set of flats of an $F$-matroid $\MM$, and to better understand the extent to which these tracts ``behave like finite fields".
This is one motivation for looking at {\em composition operations}, the subject of Section~\ref{sec:compos}. Proposition~\ref{prop:compflat} will  prove that if a tract $F$ admits a composition operation then $\{X^0:X\in\VV^*(\MM)\}$ is the set of flats of $\MM$ for every $F$-matroid $\MM$.  We'll then find composition operations for all of the tracts introduced in Example~\ref{ex:tracts} except $\P$. 

We do not know a composition operation for $\P$, and we do not know if every flat of a $\P$-matroid is the 0 set of a $\P$-covector.

\subsection{Composition operations}\label{sec:compos}
The usual system of axioms for signed vectors of  oriented matroids includes a {\em Composition Property}, which says that
 if $X,Y\in \VV(\MM)$  then $X\circ Y\in \S^E$ defined by $$X\circ Y(e)=\begin{cases}
X(e)&\mbox{ if $X(e)\neq 0$}\\
Y(e)&\mbox{  otherwise}
\end{cases}$$
is also in $\VV(\MM)$.

\begin{defn} \label{def:composition} A {\bf composition operation} on a tract $F$ is a hyperoperation $\circ_F$ defined on $F^E$ for all finite $E$ such that
\begin{enumerate}
\item For every $X_1$ and $ X_2$ and every $Y\in  X_1\circ_F  X_2$, $\supp{Y}=\supp{ X_1}\cup\supp{ X_2}$, and
\item If $X_1\in Z^\perp$ and $ X_2\in Z^\perp $ then  $X_1\circ_F  X_2\subseteq Z^\perp$.
\end{enumerate}
\end{defn}
In other words, a composition operation associates to each $X,Y\in F^E$ a nonempty subset of $(X^\perp\cap Y^\perp)^\perp\cap\{Z: \supp{Z}=\supp{X}\cup\supp{Y}\}$.

When $| X_1\circ_F X_2|=1$ for all $X_1$ and $X_2$, we will often treat $\circ_F$ as a binary operation -- as we already do for the usual composition for oriented matroids.

\begin{remark} \label{rem: compos} This definition is chosen to give us the hypotheses needed to prove the results of this section. All of the examples we will consider, including ordinary composition of oriented matroids, are, in addition, associative. Further, all of them except $\epsilon$-composition and its inspiration, Example~\ref{ex:realcomp},  satisfy the condition that $X\circ_F Y\subseteq X\hplus Y$ for all $X$ and $Y$.
 Both of these additional properties align
with the geometric motivation for composition from oriented matroids,  and both come up frequently in oriented matroid proofs. Thus Definition~\ref{def:composition}  should not necessarily be taken as, well, definitive.
\end{remark}

\begin{example} \label{ex:realcomp} The geometric motivation behind the definition of composition for oriented matroids is the observation that, if $X,Y\in\R^E$, then for all sufficiently small $\epsilon>0$, $\sign(X+\epsilon Y)=\sign(X)\circ\sign(Y)$, where the  composition on the right-hand side is the usual oriented matroid composition.

Specifically, given $X,Y\in \R^E$, let $$\epsilon_0=\min\left(\left|\frac{X(e)}{Y(e)}\right|:  X(e)Y(e)< 0\right).$$ 
(If there is no $e$ such that $X(e)Y(e)< 0$ then let $\epsilon_0=\infty$.) Then 
$X\circ_\R Y:=\{X+\epsilon Y: \epsilon<\epsilon_0\}$ is a composition operation, with the additional property that, for every $Z\in X\circ_\R Y$,  $\sign (Z)=\sign(X)\circ \sign(Y)$.
\end{example}

\begin{defn} 1. For $X_1, \ldots, X_k\in F^E$, define $X_1\circ_F\cdots \circ_F X_k$ recursively: 
$$X_1\circ_F\cdots \circ_F X_k=\bigcup_{Y\in X_1\circ_F\cdots \circ_F X_{k-1}} Y\circ_F X_k.$$

2. For ${\mathcal S}\subseteq F^E$, let ${\mathcal S}_\circ$ denote the union of all compositions  $X_1\circ_F\cdots\circ_F X_k$, where $k\in\mathbb N$ and $X_1, \ldots X_k\in {\mathcal S}$.
\end{defn}

\begin{prop} \label{prop:compflat} Let  $F$ be a tract admitting a composition operation $\circ_F$, and let $\MM$ be an $F$-matroid.
\begin{enumerate}
\item If $X, Y\in \VV(\MM)$ then $X\circ_F Y\subseteq\VV(\MM)$.
\item $(\CC^*(\MM)_\circ)^\perp= \VV(\MM)$.
\item  $\{X^0:X\in\VV^*(\MM)\}$ is exactly the set of flats of the underlying matriod of $\MM$.
\end{enumerate}
\end{prop}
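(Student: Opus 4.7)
The plan is to prove the three claims in sequence, each reducing more or less directly to the axioms of a composition operation.

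For (1), fix $X, Y \in \VV(\MM) = \CC^*(\MM)^\perp$ and let $Z \in \CC^*(\MM)$. Since $X, Y \in Z^\perp$, property (2) of Definition~\ref{def:composition} gives $X \circ_F Y \subseteq Z^\perp$. Intersecting over all $Z \in \CC^*(\MM)$ yields $X \circ_F Y \subseteq \CC^*(\MM)^\perp = \VV(\MM)$.

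For (2), the inclusion $(\CC^*(\MM)_\circ)^\perp \subseteq \CC^*(\MM)^\perp = \VV(\MM)$ is immediate from $\CC^*(\MM) \subseteq \CC^*(\MM)_\circ$. For the reverse inclusion I would fix $X \in \VV(\MM)$ and show by induction on $k$ that every $W \in Z_1 \circ_F \cdots \circ_F Z_k$ with $Z_i \in \CC^*(\MM)$ satisfies $X \perp W$. The base case $k=1$ is the definition of $\VV(\MM)$. For the inductive step, $W \in W' \circ_F Z_k$ for some $W'$ in the $(k-1)$-fold composition; then $W' \in X^\perp$ by induction and $Z_k \in X^\perp$ since $X \perp Z_k$ (using that $\perp$ is symmetric, which follows from $c$ preserving $N_G$). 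Definition~\ref{def:composition}(2), applied with $X$ playing the role of the distinguished vector, then forces $W' \circ_F Z_k \subseteq X^\perp$, hence $W \in X^\perp$.

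For (3), the inclusion $\{X^0 : X \in \VV^*(\MM)\} \subseteq \{\text{flats of }\MM\}$ is Proposition~\ref{prop:flat1}(2). For the reverse, observe first that the argument of (1) applies \emph{verbatim} with $\CC^*(\MM)$ replaced by $\CC(\MM)$ and $\VV(\MM)$ by $\VV^*(\MM)$, so $\VV^*(\MM)$ is also closed under $\circ_F$. By Proposition~\ref{prop:flat1}(1), an arbitrary flat $H$ of the underlying matroid has the form $\bigcap_{i=1}^k Z_i^0$ for some $Z_i \in \CC^*(\MM) \subseteq \VV^*(\MM)$, with the empty intersection $H = E$ handled by $\0 \in \VV^*(\MM)$. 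For $k \geq 1$, pick any $W \in Z_1 \circ_F \cdots \circ_F Z_k$; this set is nonempty because $\circ_F$ is a hyperoperation. Closure of $\VV^*(\MM)$ under $\circ_F$ gives $W \in \VV^*(\MM)$, and iterated application of Definition~\ref{def:composition}(1) yields $\supp{W} = \bigcup_{i=1}^k \supp{Z_i}$, so $W^0 = \bigcap_{i=1}^k Z_i^0 = H$.

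No step here is a serious obstacle: all three parts are mechanical unpackings of the composition axioms. The one delicate point is the tacit use of the symmetry of $\perp$ in the induction for (2), which is implicit in the conjugation framework of the paper but worth flagging explicitly.
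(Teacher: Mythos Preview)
Your proof is correct and follows essentially the same approach as the paper: parts (1) and (2) are unpacked from $\VV(\MM)=\CC^*(\MM)^\perp$ and the composition axioms (the paper simply asserts they ``follow immediately''), and part (3) composes $F$-cocircuits to realize any given flat as a zero set, invoking Proposition~\ref{prop:flat1}(2) for the converse. The only cosmetic difference is that for (3) you characterize flats as intersections of hyperplanes via Proposition~\ref{prop:flat1}(1), whereas the paper routes through Proposition~\ref{prop:matroidvect} and Corollary~\ref{cor:underlying} to express a flat's complement as a union of cocircuit supports---but these amount to the same thing.
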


\begin{proof} The first two statements follow immediately from $\VV(\MM)=\CC^*(\MM)^\perp$.

By Proposition~\ref{prop:matroidvect}  $A\subseteq E$ 
is a flat if and only if $E-A$ is a union of cocircuits of the underlying matroid. For any set of cocircuits, Corollary~\ref{cor:underlying} promises the existence of $F$-cocircuits of $\MM$ with these supports. The composition of these cocircuits is a subset of  $\VV^*(\MM)$ consisting of elements with support $E-A$. Thus every flat of $\MM$ is $X^0$ for some $X\in\VV^*(\MM)$. The converse is given by Proposition~\ref{prop:flat1}.
\end{proof}

\begin{example} \label{ex:OMcomp} Every signed vector of an oriented matroid is a composition of signed circuits (cf. 3.7.2 in~\cite{BLSWZ}). Thus if $F=\mathbb S$ then $\VV(\MM)=\CC(\MM)_\circ$, and so 
$\VV(\MM)^\perp=\VV^*(\MM)$. (Compare to Section~\ref{sec:duality}.)
\end{example}

It would be interesting to characterize the tracts $F$ with some notion of composition (possibly with the additional constraints of Remark~\ref{rem: compos}) such that every $F$-vector of an $F$-matroid $\MM$ is contained in a composition of $F$-circuits of $\MM$.
 By Proposition~\ref{prop:compflat}.2, such tracts are perfect.

\subsubsection{The Inflation Property}\label{sec:absorp}

\begin{defn} A tract is said to have the {\bf Inflation Property} if, whenever $\sum_{j=1}^k a_j\in N_G-\{0\}$ we have $b+\sum_{j=1}^k a_j\in N_G$ for all $b\in G$.
\end{defn}

\begin{prop}\label{prop:inflate} For any tract $F$, the following are equivalent.
\begin{enumerate}
\item If $a\in F-\{0\}$ then $a\hplus -a=F$.
\item $1\hplus -1=F$.
\item For every $a\in F-\{0\}$ and $b\in F$, $a\in a\hplus b$.
\end{enumerate}
Further any hyperfield having these properties satisfies the Inflation Property.
\end{prop}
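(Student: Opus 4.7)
The plan is to prove the three equivalences by rewriting each condition as a membership statement about a single formal sum in $N_G$, then exploiting commutativity of $\mathbb N[G]$ together with the $G$-action closure axiom. Unfolding the tract definition $c\in a_1\hplus\cdots\hplus a_k\iff -c+\sum a_j\in N_G$, condition (1) reads ``$-c+a+(-a)\in N_G$ for every $c\in F$ and every $a\in F-\{0\}$'', while condition (3) reads ``$-a+a+b\in N_G$ for every $a\in F-\{0\}$ and every $b\in F$''. Setting $b=-c$ makes these identical formal sums in $\mathbb N[G]$, so $(1)\Leftrightarrow(3)$ once the corner case $b=0$ (equivalently $c=0$) is verified; but that case reduces to $a+(-a)\in N_G$, which follows from $1+\eta\in N_G$ (axiom (3) of Definition~\ref{def-tract}) after multiplying through by $a\in G$ via the $G$-action.

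The implication $(1)\Rightarrow(2)$ is immediate by specializing $a=1$. For $(2)\Rightarrow(1)$, given nonzero $a$ and any $c\in F$, I would set $c'=a^{-1}c$ (with $c'=0$ when $c=0$), apply (2) to get $-c'+1+(-1)\in N_G$, and then multiply through by $a\in G$ to obtain $-c+a+(-a)\in N_G$, i.e.\ $c\in a\hplus(-a)$.

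For the hyperfield Inflation Property, suppose $\sum_{j=1}^k a_j\in N_G-\{0\}$. This says $0\in\bighplus_{j=1}^k a_j$ (as a hypersum) and at least one summand is nonzero, WLOG $a_1\neq 0$. Writing $S=\bighplus_{j\geq 2}a_j$, the relation $0\in a_1\hplus S=\bigcup_{s\in S}a_1\hplus s$ combined with the hypergroup characterization of $-a_1$ as the \emph{unique} element satisfying $0\in a_1\hplus(-a_1)$ forces $-a_1\in S$. Then for any $b\in G$, condition (1) gives $F=a_1\hplus(-a_1)\subseteq a_1\hplus S=\bighplus_{j=1}^k a_j$, so in particular $-b\in\bighplus_{j=1}^k a_j$, equivalently $b+\sum_{j=1}^k a_j\in N_G$.

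The main obstacle is really just careful bookkeeping: distinguishing the hypersum in $F$ (set-valued) from the formal sum in $\mathbb N[G]$ (into which $0\in F$ summands vanish), and distinguishing the $G$-action closing $N_G$ from multiplication inside $F$. Once these are kept straight, every equivalence reduces to a one-line rewriting of $a+(-a)+b\in N_G$ in different guises, and the Inflation step hinges on the single observation that once $-a_1\in S$, the identity $a_1\hplus(-a_1)=F$ upgrades to $a_1\hplus S=F$.
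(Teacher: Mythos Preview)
Your proposal is correct and follows essentially the same approach as the paper. The only cosmetic differences are that you prove $(1)\Leftrightarrow(3)$ directly via commutativity of the formal sum $a+(-a)+b$ in $\mathbb N[G]$, whereas the paper phrases the same step as $(2)\Leftrightarrow(3)$ via the biconditional ``$b\in a\hplus(-a)\iff a\in a\hplus b$'' (which implicitly multiplies by $-1\in G$); and for the Inflation step you invoke uniqueness of hypergroup inverses to get $-a_1\in\bighplus_{j\ge2}a_j$, while the paper reads this off directly from the tract definition of $\hplus$. Both routes are equally short.
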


\begin{proof}  (1) is equivalent to (2) because $N_G$ is invariant under multiplication by elements of $G$.

For every $a, b\in F$, $b\in a\hplus -a$ if and only if $a\in a\hplus b$. Thus (1) is equivalent to (3).

If $F$ is a hyperfield satisfying these properties, $a_1\in F-\{0\}$, and $\sum_{j=1}^k a_j\in N_G$ then $-a_1\in \bighplus_{j=2}^k a_j$, and so $\bighplus_{j=1}^k a_j=(a_1\hplus -a_1)\cup(a_1\hplus(\bighplus_{j=2}^k a_j-\{-a_1\})=F$.
\end{proof}

\begin{example}  $\K$, $\mathbb S$, and $\TP$ all satisfy the Inflation Property. 
.
\end{example}

\begin{prop} If $F$ satisfies the Inflation Property then the composition $\circ$ defined by
$$X\circ Y(e)=\begin{cases}
X(e)&\mbox{ if $X(e)\neq 0$}\\
Y(e)&\mbox{  otherwise}
\end{cases}$$
is a composition operation.
\end{prop}

\begin{proof} Let $X,Y,Z\in F^E$ such that $X\perp Z$ and $Y\perp Z$.

If $X\htimes Z=\{0\}$, then $(X\circ Y)\htimes Z=Y\htimes Z$, thus $X\circ Y\perp Z$.

Otherwise, $$(X\circ Y)\htimes Z=\sum_{j\in\supp{X}} X(j)Z(j)^c + \sum_{j\in X^0}Y(j)Z(j)^c$$ and since $\sum_{j\in\supp{X}} X(j)Z(j)^c\in N_G$ we have $(X\circ Y)\htimes Z\in N_G$.
\end{proof}

\subsubsection{$\T$-composition}

Throughout the remaining sections $+$ and $\sum$ denote ordinary addition in $\R$.

\begin{prop} \label{prop:t-composition}
Let $F\in\{\triangle, \TP, \T\R, \T\C, \T\triangle\}$, and let $\MM$ be an $F$-matroid. Then the operation
$$X\circ_{\mmax} Y(e)=\begin{cases}
X(e)&\mbox{ if $|X(e)|\geq |Y(e)|$}\\
Y(e)&\mbox{  otherwise.}
\end{cases}$$
is a composition operation on $F$.
\end{prop}

\begin{lemma} \label{lem:trianglesum} Let $\{s_1, \ldots, s_k\}$ be a finite subset of $\triangle$  with $s_1\leq\cdots\leq s_k$. Then $0\in\bighplus_{j=1}^k s_j$ if and only if $s_k\leq\sum_{j=1}^{k-1} s_j$.
\end{lemma}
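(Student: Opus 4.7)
The plan is to prove the stronger identity
\[
\bighplus_{j=1}^{k} s_j \;=\; \bigl[\max(0,\; s_k - \sigma),\; \sigma + s_k\bigr], \qquad \sigma := \sum_{j=1}^{k-1} s_j,
\]
from which the lemma is immediate: $0$ lies in this interval precisely when $\max(0, s_k - \sigma) = 0$, i.e., when $s_k \leq \sigma$.

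I would proceed by induction on $k$. The base cases $k = 1$ (where the sum is $\{s_1\}$, matching the formula since $\sigma = 0$) and $k = 2$ (where $s_1 \hplus s_2 = [s_2 - s_1,\, s_1 + s_2]$ by definition of $\triangle$) are immediate. For the inductive step, applying the hypothesis to the still-sorted sublist $s_1 \leq \cdots \leq s_{k-1}$ gives
\[
T \;:=\; \bighplus_{j=1}^{k-1} s_j \;=\; [a,\, \sigma], \qquad a := \max\bigl(0,\; s_{k-1} - (\sigma - s_{k-1})\bigr),
\]
so that $\bighplus_{j=1}^{k} s_j = \bigcup_{t \in [a,\sigma]} \bigl[\,|t - s_k|,\; t + s_k\,\bigr]$. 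I would then split into cases according to whether $s_k \leq \sigma$ or $s_k > \sigma$.

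In the first case, $a \leq s_{k-1} \leq s_k \leq \sigma$, so $t = s_k$ is admissible and contributes $[0,\, 2s_k]$; as $t$ slides from $s_k$ up to $\sigma$ the intervals $[t - s_k,\, t + s_k]$ shift rightward from $[0, 2s_k]$ to $[\sigma - s_k,\, \sigma + s_k]$, with consecutive intervals overlapping, so their union is $[0,\, \sigma + s_k]$ --- matching the claim. In the second case, $t < s_k$ for every admissible $t$, so $|t - s_k| = s_k - t$ and the intervals $[s_k - t,\, s_k + t]$ are nested and expanding in $t$, giving union $[s_k - \sigma,\, s_k + \sigma]$, again matching the formula. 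The main bookkeeping is in the first case: one must confirm that the continuously varying intervals truly cover $[0,\, \sigma + s_k]$ without gaps, which reduces to a short monotonicity check on the endpoint functions $t \mapsto |t-s_k|$ and $t \mapsto t+s_k$, and is really the only nontrivial step.
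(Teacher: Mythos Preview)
Your proof is correct and follows essentially the same approach as the paper's: both argue by induction that the hypersum is a closed interval with explicit endpoints, then read off the membership condition. The paper phrases it as computing the minimum $\max(0,\,s_{k-1}-s_{k-2}-\cdots-s_1)$ and maximum $\sum_{j=1}^{k-1} s_j$ of $\bighplus_{j=1}^{k-1} s_j$ and checking whether $s_k$ lies in that interval (using $0\in A\hplus s_k \Leftrightarrow s_k\in A$), whereas you compute the interval for the full sum $\bighplus_{j=1}^{k} s_j$ and check whether $0$ lies in it; the inductive content and case split are the same.
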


\begin{proof} $0\in(\bighplus_{j=1}^{k-1} s_j)\hplus s_k$ if and only if $s_k\in\bighplus_{j=1}^{k-1} s_j$. Induction on $k$ shows that the smallest element of $\bighplus_{j=1}^{k-1} s_j$ is $\max(0, s_{k-1}-s_{k-1}-\cdots-s_1)$, while the largest element of $\bighplus_{j=1}^{k-1} s_j$ is $\sum_{j=1}^{k-1} s_j$. Thus 
$$s_k\in\bighplus_{j=1}^{k-1} s_j\Leftrightarrow \max(0, s_{k-1}-s_{k-1}-\cdots-s_1)\leq s_k\leq \sum_{j=1}^{k-1} s_j$$
But the first inequality above is vacuous: for any $s_k\in \triangle$ we have $0\leq s_k$, and by hypothesis $s_k\geq s_{k-1}\geq s_{k-1}-s_{k-1}-\cdots-s_1$.

\end{proof}

\begin{lemma}\label{lem:tropadd}  Let $F\in\{\TP, \T\R, \T\C, \T\triangle\}$, and let $(s_j:j\in J)$ be a sequence in $F$. Then $0\in\bighplus_{j\in J} s_j$ if and only if there is a $J'\subseteq J$ so that 
\begin{enumerate}
\item $|s_j|\leq|s_{j'}|$ for all $j\in J$ and $j'\in J'$ and 
\item $0\in\bighplus_{j\in J'} s_j$.
\end{enumerate}

Also,
\begin{enumerate}
\item if $F=\TP$ or $F=\T\C$ then $0\in\bighplus_{j\in J'} s_j$ if and only if $\{s_j: j\in J'\}$ is not contained in an open half circle,
\item if $F=\T\R$  then $0\in\bighplus_{j\in J'} s_j$ if and only if  there exists $j_1, j_2\in J'$ such that $s_{j_1}=-s_{j_2}$, and
\item if $F=\T\triangle$
 then $0\in\bighplus_{j\in J'} s_j$ if and only if either $s_j=0$ for all $j$ or  there exists $j_1\neq j_2\in J'$ such that $s_{j_1}=s_{j_2}$.
 \end{enumerate}
\end{lemma}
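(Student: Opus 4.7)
I would prove the lemma in two stages: first the main equivalence, which uses the tropical structure of all four hyperfields uniformly, then the three specific characterizations of $0\in\bighplus_{j\in J'} s_j$.

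The key tool is an \emph{absorption principle}, verified by direct inspection of each hyperoperation: whenever $|x|>|y|$ in $F$ one has $x\hplus y=\{x\}$, and in every case each element of $x\hplus y$ has magnitude at most $\max(|x|,|y|)$. Induction on $|J|$ using associativity then shows every element of $\bighplus_{j\in J} s_j$ has magnitude at most $M := \max_j |s_j|$. If $M=0$ the lemma holds trivially, so assume $M>0$. Set $J' := \{j : |s_j|=M\}$ and $K := J \setminus J'$, and let $A := \bighplus_{j\in J'} s_j$ and $B := \bighplus_{j\in K} s_j$; every element of $B$ has magnitude strictly less than $M$. By associativity $\bighplus_{j\in J} s_j = \bigcup_{x\in A,\,y\in B}(x\hplus y)$. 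If $0$ lies in this union then $y=-x$ for some such pair; since $|y|<M$, either $y=0$ (hence $x=0\in A$, done) or $|x|=|y|<M$, and the second case is resolved by the structural claim below.

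The structural claim is that \emph{if $(t_j)_{j\in J'}$ lies entirely in the magnitude-$M$ shell of $F$, then $\bighplus_{j\in J'} t_j$ is either contained in the shell $\{z:|z|=M\}$ or equals the full disk $\{z:|z|\leq M\}$}; in particular, $\bighplus_{j\in J'} t_j$ contains an element of magnitude strictly less than $M$ iff it contains $0$ iff it equals the disk. I would prove this by induction on $|J'|$: adding a magnitude-$M$ term either keeps the hypersum in the shell (when no antipodal cancellation $z\hplus(-z)$ occurs) or expands it via the clause $z\hplus(-z)=\{w:|w|\leq M\}$ to the disk, after which further additions of magnitude-$M$ terms preserve the disk by direct computation. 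The case analysis is routine but laborious in $\T\C$ and $\TP$, where the shell is a full circle of phases; this is the main obstacle. Granting the claim, the forward direction of the reduction is complete, and the reverse is a corollary: if $0\in A$ then $A$ is the disk of magnitude $M$, so $-y\in A$ for every $y\in B$, and $-y\hplus y\ni 0$ forces $0\in A\hplus B=\bighplus_{j\in J} s_j$.

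With the reduction in hand, the three characterizations follow by inspecting $\bighplus_{j\in J'} s_j$ when all $|s_j|=M>0$. In $\TP$ and $\T\C$, absent antipodal pairs the hypersum equals $\{M\cdot\ph(\sum_j a_j s_j) : a_j\geq 0 \text{ not all zero}\}$, and contains $0$ precisely when some nontrivial nonnegative combination of the $s_j$ vanishes --- a standard convex-geometry condition equivalent to the rescaled $s_j/M$'s not lying in any common open half-circle of $S^1$; the antipodal case is the extreme instance and is already handled by the structural claim. For $\T\R$ the shell is $\{\pm M\}$, so $0\in\bighplus_{j\in J'} s_j$ iff both signs appear, i.e.\ some $s_{j_1}=-s_{j_2}$. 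For $\T\triangle$ the shell is $\{M\}$, and $M\hplus M=\{z:z\leq M\}\ni 0$, so (for $M>0$) the condition reduces to $|J'|\geq 2$, matching the clause \emph{$\exists\,j_1\neq j_2\in J'$ with $s_{j_1}=s_{j_2}$}; the degenerate case $M=0$ is covered by \emph{$s_j=0$ for all $j$}.
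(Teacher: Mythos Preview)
Your argument is correct and is the natural way to flesh out what the paper leaves as an exercise: the paper's own proof of this lemma consists of the single sentence ``The proof is easy.'' Your absorption principle and the shell-versus-disk dichotomy for hypersums of maximum-magnitude terms are exactly the structural facts one needs, and your case analyses for the three characterizations are accurate.
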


The proof is easy.

Now we can prove Proposition~\ref{prop:t-composition}:
\begin{proof}  
We first give the argument for $F=\triangle$. Let $X,Y,Z\in \triangle^E$ such that $X\perp Z$ and $Y\perp Z$.  Consider an $f$ with $|(X\circ_{\mmax} Y)(f)Z(f)^c|$ as large as possible. Then
\begin{align*}
\sum_{e\neq f} (X\circ_{\mmax} Y)(e)Z(e)^c&\geq \sum_{e\neq f} X(e)Z(e)^c\\[12pt]
&\geq X(f)Z(f)^c\qquad\mbox{ since $0\in X\htimes Z$, by Lemma~\ref{lem:trianglesum}}
\end{align*}
and similarly $\sum_{e\neq f} (X\circ_{\mmax} Y)(e)Z(e)^c\geq Y(f)Z(f)^c$. Since $(X\circ_{\mmax} Y)(f)\in\{X(f), Y(f)\}$, the result follows.

For $F\in \{\TP, \T\R, \T\C, \T\triangle\}$,
let $X,Y,Z\in F^E$ such that $X\perp Z$ and $Y\perp Z$. Consider an $f$ with $|(X\circ_{\mmax} Y)(f)Z(f)^c|$ as large as possible. Thus for every $e$
\begin{align*}
|(X\circ_{\mmax} Y)(f)Z(f)^c|&\geq|(X\circ_{\mmax} Y)(e)Z(e)^c|\\
&=|(X\circ_{\mmax} Y)(e)||Z(e)^c|\\
&\geq|X(e)Z(e)^c|.
\end{align*}
Similarly $|(X\circ_{\mmax} Y)(f)Z(f)^c|\geq |Y(e)Z(e)^c|$. 

If there is such an $f$ with $(X\circ Y)(f)=X(f)$, then orthogonality of $X$ and $Z$  implies the existence of $E_0\subseteq E$ with
$|X(e)Z(e)^c|=|X(f)Z(f)^c|$ for all $e\in E_0$, and $0\in\bighplus_{e\in E_0} X(e)Z(e)^c$.
Since for $e\in E_0$
\begin{align*}
|X(e)Z(e)^c|&=|(X\circ_T Y)(f)Z(f)^c|\\
&\geq|Y(e)Z(e)^c|
\end{align*}
we have $|X(e)|\geq |Y(e)|$, and so $(X\circ_T Y)(e)=X(e)$. Thus for all $e'\in E$ and $e\in E_0$, $$|(X\circ_T Y)(e')Z(e')^c|\leq |(X\circ_T Y)(e)Z(e)^c|$$ and $$0\in\bighplus_{e\in E_0} (X\circ_T Y)(e)Z(e)^c$$ so $0\in (X\circ_T Y)\htimes Z$.

A similar argument covers the case when $(X\circ Y)(f)=Y(f)$ for all such $f$.
\end{proof}

\subsubsection{ $\epsilon$-composition}

This section gives a $\T$-analog to the operation of Example~\ref{ex:realcomp}.

\begin{defn} Let $F\in \{\triangle,\TR,\TC,\T\triangle\}$, and let $X,Y\in F^E$.
For each real number $\epsilon>0$, define $X\circ_\epsilon Y\in F^E$ to be the set of $Z$ such that, for some $\omega$ with $0<\omega<\epsilon$:
$$Z(e)=\begin{cases}
X(e)&\mbox{ if $X(e)\neq 0$}\\
\omega Y(e)&\mbox{ if $X(e)= 0$.}\end{cases}$$
\end{defn}

\begin{prop}  Let $F\in \{\triangle,\TR,\TC,\T\triangle\}$, let $\MM$ be an $F$-matroid, and let $X,Y\in\VV^*(\MM)$. Then there is an $\epsilon>0$ such that $X\circ _{\epsilon}Y\subseteq\VV^*(\MM)$.
\end{prop}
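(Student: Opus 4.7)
The plan is to reduce the statement to a finite check (one $F$-circuit per $G$-orbit) and then, for each such circuit $W$, carry out a perturbative computation showing that $Z\htimes W\in N_G$ whenever $\eta$ is sufficiently small. The underlying matroid has only finitely many circuits, so $\CC(\MM)$ has only finitely many $G$-orbits (Corollary \ref{cor:underlying}); pick representatives $W_1,\dots,W_m$. By Lemma \ref{lem:orbitreps} it suffices to arrange $Z\htimes W_i\in N_G$ for each $i$ and each $Z\in X\circ_\epsilon Y$. Each such $Z=Z_\eta$ is determined by the single parameter $\eta\in(0,\epsilon)$, and for any circuit $W$ one has
\[
Z_\eta\htimes W \;=\; \sum_{e\in\supp W\cap\supp X} X(e)W(e)^c \;+\; \eta\sum_{e\in\supp W\cap X^0} Y(e)W(e)^c.
\]
I will produce a threshold $\epsilon_i>0$ per circuit and take $\epsilon:=\min_i\epsilon_i$.

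If $\supp W\cap\supp X=\emptyset$, then $Z_\eta\htimes W=\eta\cdot(Y\htimes W)$; since $Y\perp W$ and $N_G$ is invariant under multiplication by $G$ (Definition \ref{def-tract}(4)), any $\epsilon_i>0$ works. Otherwise set $M_X:=\max_{e\in\supp W\cap\supp X}|X(e)W(e)^c|>0$ and choose $\epsilon_i$ small enough that $\epsilon_i\,|Y(e)W(e)^c|<M_X$ for every $e\in\supp W\cap X^0$. Then for $\eta<\epsilon_i$ the maximum-modulus terms in $Z_\eta\htimes W$ are precisely those indices $e\in\supp W\cap\supp X$ with $|X(e)W(e)^c|=M_X$, and $Z_\eta(e)=X(e)$ on all such indices.

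For $F\in\{\TR,\TC,\T\triangle\}$, applying Lemma \ref{lem:tropadd} to $X\htimes W\in N_G$ produces a subset $J'$ of the above maximum-modulus indices with $0\in\bighplus_{e\in J'}X(e)W(e)^c=\bighplus_{e\in J'}Z_\eta(e)W(e)^c$; the reverse direction of Lemma \ref{lem:tropadd} then gives $Z_\eta\htimes W\in N_G$. For $F=\triangle$, Lemma \ref{lem:trianglesum} applied to $X\htimes W$ says that $M_X$ is at most the sum of the remaining $X$-terms, and the additional nonnegative contributions $\eta Y(e)W(e)^c$ only enlarge that sum while leaving the maximum at $M_X$; Lemma \ref{lem:trianglesum} again yields $Z_\eta\htimes W\in N_G$.

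The only real obstacle is arranging that the witness to $X\perp W$ survives the $\eta$-perturbation, which forces the choice of $\epsilon_i$ suppressing every $Y$-contribution strictly below modulus $M_X$, and it requires splitting between the tropical-style hyperfields (where Lemma \ref{lem:tropadd} isolates the max-modulus indices as the only source of cancellation) and the triangle hyperfield (where Lemma \ref{lem:trianglesum} gives the max-versus-sum-of-the-rest criterion). The guiding idea in both flavors is the same: for small $\eta$ the maximum-modulus terms of $Z_\eta\htimes W$ coincide with those of $X\htimes W$ and carry the same values, so the orthogonality certificate for $X$ transports verbatim to $Z_\eta$.
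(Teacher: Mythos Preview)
Your proof is correct and follows essentially the same route as the paper: reduce to finitely many circuit representatives, dispose of the case $\supp W\cap\supp X=\emptyset$ by scaling, and otherwise pick $\epsilon$ small enough that the $\eta Y$-contributions fall strictly below the top $X$-modulus, then invoke Lemma~\ref{lem:tropadd} (tropical cases) or Lemma~\ref{lem:trianglesum} (triangle case). Your threshold $\epsilon_i$ is in fact stated more carefully than the paper's (which omits the factor $|Y(e)|$), and your triangle argument is slightly more economical, using only $X\perp W$ once $\epsilon_i$ forces the maximum term to come from $\supp X$, whereas the paper's version appeals to both $X\perp Z_j$ and $Y\perp Z_j$.
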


\begin{proof} Let $\{Z_1, \ldots, Z_k\}$ be a choice of one $F$-circuit from each $G$-orbit of $\CC(\MM)$. For each $j$ such that $\supp{X}\cap\supp{Z_j}\neq\emptyset$,  let $ d_j=\max(|X(e)||Z_j(e)|:e\in E)$, and let  $\epsilon_j=\min(|\frac{ d_j}{Z_j(f)}|:f\in\supp{Z}-\supp{X})$. Then let $\epsilon$ be the minimum over all $\epsilon_j$. 

For all $j$ such that $\supp{X}\cap\supp{Z_j}=\emptyset$ and for all $X'\in X\circ_\epsilon Y$ we have $X'\htimes Z_j=\omega Y\htimes Z_j$, for some $\omega$, and so $X'\perp Z_j$.

If $\supp{X}\cap\supp{Z_j}\neq\emptyset$, we have slightly different arguments for $F=\triangle$ and for $F\in\{\TR,\TC,\T\triangle\}$.

If $F=\triangle$ and $X'\in X\circ_\epsilon Y$, consider an $f$ with $|X'(f)Z_j(f)^c|$ as large as possible. 
Then
\begin{align*}
\sum_{e\neq f} X'(e)Z_j(e)^c&\geq \sum_{e\neq f} X(e)Z_j(e)^c\\[12pt]
&\geq X(f)Z_j(f)^c\qquad\mbox{ since $0\in X\htimes Z_j$, by Lemma~\ref{lem:trianglesum}}
\end{align*}
Also 
\begin{align*}
\sum_{e\neq f} X'(e)Z_j(e)^c&\geq \sum_{e\neq f} \omega Y(e)Z_j(e)^c\\[12pt]
&\geq\omega Y(f)Z_j(f)^c\qquad\mbox{ since $0\in Y\htimes Z_j$.}
\end{align*}
 Since $X'(f)\in\{X(f), \omega Y(f)\}$, the result follows.

If $F\in \{\TR,\TC,\T\triangle\}$, then $X\htimes Z_j=\bighplus_{e\in\supp{X}} X(e)Z(e)^c=\{\alpha\in F: |\alpha|\leq  d_j\}$.  Denote this set $I_{ d_j}$. Thus
 for every $X'\in X\circ_\omega Y$ 
 we have
\begin{align*}
X'\htimes Z_j&=\bighplus_{e\in\supp{X}} X(e)Z_j(e)^c \hplus\bighplus_{f\in\supp{Z_j}-\supp{X}}\omega Y(f)Z_j(f)^c\\
&=I_{ d_j}\hplus\bighplus_{f\in\supp{Z_j}-\supp{X}}\omega Y(f)Z_j(f)^c\\
&=I_{ d_j}.
\end{align*}

Thus $X'\in\CC(\MM)^\perp$.
\end{proof}

\section{Sum properties}\label{sec:elim}

When $K$ is a field and $\MM$ is a $K$-matroid, then $\VV^*(\MM)$ is a vector space, hence is closed under (hyper)addition. In contrast, even for an $\S$-matroid $\MM$, if $X,Y\in\VV^*(\MM)$ then $X\hplus Y$ is not necessarily a subset of $\VV^*(\MM)$. However, matroids over fields, $\K$, and $\S$ all satisfy the following weaker versions of additive closure.

\begin{wcproperty} If $X,Y\in \VV^*(\MM)$ then $(X\hplus Y)\cap \VV^*(\MM)\neq \emptyset$.
\end{wcproperty}

\begin{eproperty}If $X,Y\in \VV^*(\MM)$ and $X(e)=-Y(e)$ then there is a $Z\in (X\hplus Y)\cap \VV^*(\MM)\neq \emptyset$
such that $Z(e)=0$.
\end{eproperty}

\begin{acproperty} If $X,Y\in \VV^*(\MM)$ and $\alpha\in X(e)\hplus Y(e)$ then there is a $Z\in (X\hplus Y)\cap \VV^*(\MM)\neq \emptyset$
such that $Z(e)=\alpha$.
\end{acproperty}

It would be interesting to characterize the tracts  whose matroids satisfy each of these properties.

If $F$ is a tract and $x,y\in F$ with $x\hplus y=\emptyset$, then a rank 1 $F$-matroid with 1 element will not satisfy the Weak Closure Property. Examples of such tracts are given in~\cite{BB17}.

\begin{conj} The Weak Closure Property holds for all matroids over hyperfields.
\end{conj}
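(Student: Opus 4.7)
My plan is to use the reduced row-echelon form characterization of $\VV^*(\MM)$ from Definition~\ref{def:vectorax}. Fix any basis $B$ of $\MM$ with reduced row-echelon form $\{R_j : j \in B\}\subseteq \CC^*(\MM)$ guaranteed by Lemma~\ref{lem:circrref}. Because $F$ is a hyperfield, the set $H_j := X(j)\hplus Y(j)$ is non-empty for every $j\in B$. The hope is to choose $\zeta_j\in H_j$ and then produce $Z\in F^E$ with $Z(j)=\zeta_j$ for $j\in B$, with $Z(e)\in X(e)\hplus Y(e)$ for each $e$, and with $Z\in\VV^*(\MM)$.

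The key observation behind the construction is the coordinate-wise containment
\[
X(e)\hplus Y(e) \;\subseteq\; \bighplus_{j\in B}\bigl(X(j)\hplus Y(j)\bigr)R_j(e) \;=\; \bigcup_{\zeta'\in\prod_j H_j}\,\bighplus_{j\in B}\zeta'_j R_j(e),
\]
which follows from distributivity and associativity of hyperaddition, using that $X$ and $Y$ are linear combinations of $\{R_j\}$ coordinate-wise. This guarantees that for each individual coordinate $e$ there exist a tuple $\zeta^{(e)}$ and a value $Z(e)\in (X(e)\hplus Y(e))\cap\bighplus_j \zeta^{(e)}_j R_j(e)$. If one could choose $\zeta^{(e)}=\zeta$ uniformly in $e$, the resulting $Z$ would be a coordinate-wise linear combination of $\{R_j\}$ with respect to $B$, and one could then try to verify $Z\in\CC(\MM)^\perp=\VV^*(\MM)$ by testing $Z\htimes W$ for each $W\in\CC(\MM)$.

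The hard part is twofold. First, the simultaneous selection of a single tuple $\zeta$ valid for every $e\in E\setminus B$ is a nontrivial constraint-satisfaction problem and does not follow formally from the per-coordinate existence. Second, even with $\zeta$ and $Z\in\bighplus_j\zeta_j R_j$ in hand, the natural orthogonality calculation --- using $R_j\htimes W\in N_G$ and the fact that, for hyperfields, $N_G$ is closed under addition --- only yields
\[
-Z\htimes W + \sum_{j\in B}\zeta_j\,(R_j\htimes W)\in N_G \quad\text{and}\quad \sum_{j\in B}\zeta_j\,(R_j\htimes W)\in N_G,
\]
and hyperfields such as $\P$ admit $s+t\in N_G$ and $t\in N_G$ with $s\notin N_G$, so $Z\htimes W\in N_G$ does not follow and $Z$ need not lie in $\VV^*(\MM)$. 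Closing these gaps plausibly requires exploiting further hyperfield structure --- the hypergroup reversibility axiom $x\in y\hplus z\iff z\in x\hplus(-y)$, or stringency/double-distributivity assumptions --- or else an induction on $\rank\MM$ that reduces the verification to the rank-one case, where weak closure is immediate from Proposition~\ref{prop:rank1}.
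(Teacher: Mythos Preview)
The statement you are attempting to prove is labelled a \emph{Conjecture} in the paper, not a theorem: the paper offers no proof, and the surrounding text makes clear that this is an open question. So there is no ``paper's own proof'' to compare against.

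Your proposal is not a proof either, and you are candid about this. The two gaps you identify are genuine obstructions, not merely technicalities to be filled in later. First, the coordinate-wise containment $X(e)\hplus Y(e)\subseteq\bighplus_{j\in B}(X(j)\hplus Y(j))R_j(e)$ only guarantees that for each $e$ \emph{separately} there is a tuple $\zeta^{(e)}$ making the inclusion work; there is no mechanism in general hyperfields to align these into a single $\zeta$. Second, even granting a uniform $\zeta$ and a $Z\in\bighplus_j\zeta_j R_j$, you have only shown consistency with $\CC^*(\MM)$ relative to the single basis $B$, and your attempt to upgrade this to $Z\in\CC(\MM)^\perp$ runs into exactly the failure you describe: from $s+t\in N_G$ and $t\in N_G$ one cannot deduce $s\in N_G$ in a general hyperfield. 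The phase hyperfield $\P$ already exhibits this, and the paper's Section~5.4 is full of $\P$-counterexamples to related closure-type statements (duality, deletion, contraction), which should make you skeptical that a purely formal argument of this shape can succeed.

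The honest conclusion is that your outline is a reasonable first attack that explains \emph{why} the conjecture is plausible --- hyperaddition is never empty, and the reduced row-echelon description is suggestive --- but it does not close, and the paper does not claim otherwise.
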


Chris Eppolito has recently found an example of a matroid over a hyperfield violating the Elimination Property. 

As a small contribution to the study of $X\hplus Y$, we have the following.

\begin{prop} \label{prop:tropicalclosure}  Let $F\in\{\TR, \TC, \T\triangle\}$. Let $\MM$ be an $F$-matroid and $X_1, X_2\in\VV^*(\MM)$. Assume there is at most one value $e$ such that $|X_1(e)|=|X_2(e)|\neq 0$ and $X_1(e)\neq X_2(e)$. Then $X_1\hplus X_2\subseteq \VV^*(\MM)$.
\end{prop}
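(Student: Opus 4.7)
The plan is to fix an arbitrary $Y\in\CC(\MM)$ and prove that every $Z\in X_1\hplus X_2$ satisfies $Z\perp Y$; this suffices since $\VV^*(\MM)=\CC(\MM)^\perp$ by Theorem~\ref{thm:cryptom}. Writing $s_i(e):=X_i(e)\,Y(e)^c$ and $T(e):=Z(e)\,Y(e)^c$, the goal becomes $0\in\bighplus_e T(e)$, while the hypotheses $X_1,X_2\in\CC(\MM)^\perp$ give $0\in\bighplus_e s_i(e)$ for $i=1,2$.

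The hypothesis is designed so that in each of $\TR,\TC,\T\triangle$ there is at most one coordinate $e_0$ where $X_1(e_0)\hplus X_2(e_0)$ can fail to be a singleton; for every other $e$, the hypersum $s_1(e)\hplus s_2(e)$ has a single element and that element is forced to be $T(e)$. Because $\TR,\TC,\T\triangle$ are honest hyperfields (not merely tracts), $\hplus$ is commutative and associative, so I would compute
\begin{equation*}
\bighplus_{e\neq e_0} T(e) \;=\; \bighplus_{e\neq e_0}\bigl(s_1(e)\hplus s_2(e)\bigr) \;=\; \Bigl(\bighplus_{e\neq e_0} s_1(e)\Bigr)\hplus\Bigl(\bighplus_{e\neq e_0} s_2(e)\Bigr) \;=:\; U\hplus V.
\end{equation*}

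The orthogonalities $0\in s_i(e_0)\hplus\bighplus_{e\neq e_0}s_i(e)$ translate, via the hypergroup identity $0\in x\hplus S\Leftrightarrow -x\in S$, into $-s_1(e_0)\in U$ and $-s_2(e_0)\in V$. Distributivity of multiplication by $-1$ over $\hplus$ then yields
\begin{equation*}
U\hplus V \;\supseteq\; \bigl(-s_1(e_0)\bigr)\hplus\bigl(-s_2(e_0)\bigr) \;=\; -\bigl(s_1(e_0)\hplus s_2(e_0)\bigr).
\end{equation*}
Since $T(e_0)\in s_1(e_0)\hplus s_2(e_0)$, this gives $-T(e_0)\in U\hplus V$; applying the hypergroup axiom again produces $0\in T(e_0)\hplus(U\hplus V)=\bighplus_e T(e)$, as required.

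The main delicate point is the hyperfield-specific bookkeeping needed to justify the singleton claim for $e\neq e_0$ (and to dispose of the degenerate case $Y(e_0)=0$, in which $T(e_0)=0$ and the exceptional coordinate drops out of the argument entirely). Once the singleton property is in hand, commutativity and associativity of $\hplus$ carry out all of the structural work with no case analysis.
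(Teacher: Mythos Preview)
Your singleton claim—that for every $e\neq e_0$ the set $X_1(e)\hplus X_2(e)$ reduces to a single element—is the crux of the argument, and it fails for $\T\triangle$. In $\T\triangle$ every element is nonnegative, so $|x|=x$; the condition ``$|X_1(e)|=|X_2(e)|\neq 0$ and $X_1(e)\neq X_2(e)$'' is therefore never satisfied, and the proposition's hypothesis is vacuous. But in $\T\triangle$ one has $x\hplus x=[0,x]$, so whenever $X_1(e)=X_2(e)>0$ the coordinate hypersum is an interval, and nothing prevents this from happening at every coordinate. Once the singleton property is lost your displayed equality $\bighplus_{e\neq e_0} T(e)=U\hplus V$ degrades to the inclusion $\subseteq$, and the remaining steps no longer yield $-T(e_0)\in\bighplus_{e\neq e_0}T(e)$.

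This gap is not repairable: the proposition is in fact false for $\T\triangle$. Take the rank~1 $\T\triangle$-matroid on $\{1,2\}$ with $\VV^*(\MM)=\{(a,a):a\geq 0\}$ and set $X_1=X_2=(1,1)$. The hypothesis holds vacuously, yet $X_1\hplus X_2=[0,1]^2\not\subseteq\VV^*(\MM)$. The paper's own proof makes the same unjustified move at the line ``by our hypothesis, $X'(e)=X_1(e)=X_2(e)$ for all but at most one element $e_0$ of $S_1\cap S_2$'': the conclusion $X'(e)=X_1(e)$ when $X_1(e)=X_2(e)$ presumes $x\hplus x=\{x\}$, which holds in $\TR$ and $\TC$ but not in $\T\triangle$. (Lemma~\ref{lem:tropclos} has the same defect for $\T\triangle$: if $\bighplus_{s\in S}s$ is the singleton $\{m\}$ because the maximum $m$ is attained once, then $a=b=m$ gives $a\hplus b=[0,m]\not\subseteq\{m\}$.)

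For $\TR$ and $\TC$ your argument is correct and somewhat cleaner than the paper's. There the singleton claim genuinely holds: away from $e_0$ either $|X_1(e)|\neq|X_2(e)|$ (and the hypersum selects the larger-modulus term) or $X_1(e)=X_2(e)$ (and $x\hplus x=\{x\}$ in both hyperfields). Your route then finishes uniformly via associativity and the hypergroup axiom. The paper instead invokes the explicit characterization of tropical zero-sums (Lemma~\ref{lem:tropadd}), splits into cases according to whether the maximal moduli $c_1=\max_e|X_1(e)Z(e)^c|$ and $c_2$ coincide, and in the equal case appeals to the ``convexity'' Lemma~\ref{lem:tropclos}. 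Your approach is more conceptual and avoids this casework; the paper's exposes more of the tropical structure at the cost of extra bookkeeping.
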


\begin{proof} Let $Z\in\CC(\MM)$. If $\supp{Z}\cap\supp{X_1}=\emptyset$ then for every $X'\in X_1\hplus X_2$ we have $X'\htimes Z=X_2\htimes Z$, and so $X'\perp Z$. Similarly if $\supp{Z}\cap\supp{X_2}=\emptyset$ then every $X'\in X_1\hplus X_2$ is in $Z^\perp$.

Now assume $\supp{Z}\cap\supp{X_1}\neq\emptyset$ and $\supp{Z}\cap\supp{X_2}\neq\emptyset$. Thus, by Lemma~\ref{lem:tropadd} there are constants $c_1$, $c_2$ and nonempty subsets $S_1$, $S_2$ of $E$ such that for $j\in\{1,2\}$,
\begin{enumerate}
\item for all $e\in E$, $|X_j(e)Z(e)^c|\leq c_j$, with equality if and only if $e\in S_j$, and
\item $\0\in\bighplus_{e\in S_j} X_j(e)Z(e)^c$.
\end{enumerate}

If $c_1\neq c_2$, without loss of generality assume $c_2>c_1$. Then for each $f_1\in S_2$ and each $X'\in X_1\hplus X_2$ we have $X'(f_1)=X_2(f_1)$. Also, for every $f_2\in E- S_2$ we have $|X'(f_2) Z(f_2)^c|<c_2$. Thus
\begin{align*}
X'\htimes Z&= \sum_{e\in E} X'(e)Z(e)^c\\
&= \sum_{e\in S_2} X'(e)Z(e)^c\\
&= \sum_{e\in S_2} X_2(e)Z(e)^c\in N_G.
\end{align*}

If $c_1=c_2$, then for each $X'\in X_1\hplus X_2$, $|X'(e)Z(e)^c|$ is maximized at each $e\in S_1\cup S_2$, and further, by our hypothesis, $X'(e)=X_1(e)=X_2(e)$ for all but at most one element $e_0$ of $S_1\cap S_2$. 

Thus
\begin{align*}
X'\htimes Z&= \sum_{e\in E} X'(e)Z(e)^c\\
&= \sum_{e\in S_1\cup S_2} X'(e)Z(e)^c
\end{align*}
and for $j\in\{1,2\}$
\begin{align*}
 X_j(e_0)Z(e_0)^c+\sum_{e\in S_1\cup S_2-\{e_0\}} X_j(e)Z(e)^c\in N_G\\[12pt]
-X_j(e_0)Z(e_0)^c\in\bighplus_{e\in S_1\cup S_2-\{e_0\}} X_j(e)Z(e)^c
=\bighplus_{e\in E-\{e_0\}} X'(e)Z(e)^c
\end{align*}

and so by Lemma~\ref{lem:tropclos} 
\begin{align*}
-X'(e_0)Z(e_0)^c&\in\bighplus_{e\in E-\{e_0\}} X'(e)Z(e)^c\\
\sum_{e\in E} X'(e)Z(e)^c&\in N_G\qedhere
\end{align*}
\end{proof}

\begin{lemma}\label{lem:tropclos} $F\in\{\TR, \TC, \T\triangle\}$. Let $S$ be a subset of $F$ and $a,b\in F$. If $\{a,b\}\subseteq \bighplus_{s\in S} s$ then $a\hplus b\subseteq \bighplus_{s\in S} s$.
\end{lemma}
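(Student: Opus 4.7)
My plan is a direct case analysis on the hyperfield $F\in\{\TR,\TC,\T\triangle\}$, exploiting the fact that in each case the set $A:=\bighplus_{s\in S}s$ takes an extremely rigid shape, dictated by the elements of $S$ of maximum absolute value. Writing $M:=\sup_{s\in S}|s|$ and $S':=\{s\in S:|s|=M\}$ (with multiplicities retained when $S$ is read as an indexed family, as appears needed for the applications), I would first classify the possible shapes of $A$ by induction on $|S|$ using the piecewise formulas for $\hplus$: $A$ must be either (i) the trivial set $\{0\}$ (when $M=0$), (ii) a singleton $\{c\}$ with $|c|=M$, (iii) the closed disk $D_M:=\{z\in F:|z|\leq M\}$, or (iv), in $\TC$ only, an arc $\{M\ph(\sum_{s\in S'}c_s s):c_s\in\R_{\geq 0}\}$ on the circle of radius $M$. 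The split between the small cases and the disk case is controlled by whether $0$ lies in the convex hull of $S'$: in $\TR$ by both $\pm M$ appearing, in $\TC$ by the convex-hull condition, and in $\T\triangle$ by $M$ being attained at least twice.

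Second, I would check closure of each shape under $\hplus$. The disk case $A=D_M$ is immediate: from the piecewise formulas for $\hplus$, every $c\in a\hplus b$ satisfies $|c|\leq\max(|a|,|b|)\leq M$. For a nonzero singleton $A=\{c\}$, the hypothesis $\{a,b\}\subseteq A$ forces $a=b=c$, and in $\TR$ and $\TC$ the identity $c\hplus c=\{c\}$ closes the argument. For an arc in $\TC$, writing $a=M\ph(\sum c_s^{(a)}s)$ and $b=M\ph(\sum c_s^{(b)}s)$, every element of $a\hplus b$ is of the form $M\ph(\alpha a+\beta b)=M\ph(\sum_s(\alpha' c_s^{(a)}+\beta' c_s^{(b)})s)$ for nonnegative $\alpha',\beta'$, and therefore again lies in the arc; the worrisome case $a=-b$ cannot arise because the arc has angular length strictly less than $\pi$ (antipodal points would put $0$ into $\CCnv(S')$, contradicting shape (iv)).

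The main obstacle is the singleton case in $\T\triangle$, where the identity $c\hplus c=\{c\}$ fails and is replaced by $c\hplus c=\{z:0\leq z\leq c\}$. I would handle it by observing that in $\T\triangle$ the singleton shape $A=\{M\}$ can arise only when $M$ is attained exactly once among the (indexed) terms of $S$; in the usage inside Proposition~\ref{prop:tropicalclosure}, where $a=-X_1(e_0)Z(e_0)^c$ and $b=-X_2(e_0)Z(e_0)^c$ are distinct, the pair $\{a,b\}$ of distinct elements cannot both lie in a singleton, forcing $A$ into shape (iii), where closure has already been established. Thus the lemma reduces, in every remaining case, to the disk or arc closure steps above.
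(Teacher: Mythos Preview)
Your approach is the natural unpacking of the paper's one-line proof (``This follows immediately from Lemma~\ref{lem:tropadd}''): classify the possible shapes of $A=\bighplus_{s\in S}s$ using the explicit description of tropical hypersums, then check that each shape is closed under $\hplus$. For $\TR$ and $\TC$ your case analysis is correct and complete; in particular your treatment of the $\TC$ arc case is right, since an arc arising as the phase set of a pointed nonnegative cone has angular length strictly less than~$\pi$, so two points on it are never antipodal.

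More importantly, you have caught something the paper's proof glosses over: the lemma is \emph{false} for $\T\triangle$ as stated. With $S=\{1\}$ one has $\bighplus_{s\in S}s=\{1\}$, and taking $a=b=1$ gives $a\hplus b=[0,1]\not\subseteq\{1\}$. Your diagnosis is exactly right: the singleton shape $A=\{M\}$ occurs in $\T\triangle$ precisely when the maximum is attained once, and in that hyperfield $c\hplus c=[0,c]\neq\{c\}$. Your proposed fix---observing that in the sole application (Proposition~\ref{prop:tropicalclosure}) the two elements $a=-X_1(e_0)Z(e_0)^c$ and $b=-X_2(e_0)Z(e_0)^c$ are distinct whenever the lemma is actually invoked, so the singleton case cannot arise---is correct and salvages the proposition. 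But you should be explicit that what you have done is not a proof of the lemma as written; rather, you have shown the lemma holds for $\TR$ and $\TC$, exhibited a counterexample for $\T\triangle$, and verified that the $\T\triangle$ failure does not propagate to Proposition~\ref{prop:tropicalclosure}. A clean repair to the lemma itself would be to add the hypothesis $a\neq b$ (or to restrict to $F\in\{\TR,\TC\}$).
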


This follows immediately from Lemma~\ref{lem:tropadd}.

\noindent{\bf Acknowledgements.} Many thanks to Matt Baker, Emanuele Delucchi, and Thomas Zaslavsky  for helpful discussions. Chris Eppolito contributed the idea of the Inflation Property. Thanks also to Ting Su for Proposition 4.5 and for many comments and corrections, and to an anonymous referee whose careful reading led to important corrections.

\bibliographystyle{amsalpha}
\bibliography{bibliV}

\end{document}